\tikzstyle{every node}=[font=\scriptsize]
\tikzstyle{block} = [draw, rectangle]
\tikzstyle{sum} = [draw, circle, node distance=1cm, inner sep=2pt]
\tikzstyle{input} = [coordinate]
\tikzstyle{output} = [coordinate]
\tikzstyle{pinstyle} = [pin edge={to-,thin,black}]
\tikzstyle{chart_node_algorithm} = [rectangle, rounded corners, text width=0.25\textwidth, minimum height=1cm,text centered, fill=LightSteelBlue!10, draw=LightSteelBlue, thick]
\tikzstyle{chart_node_lqr} = [rectangle, rounded corners, text width=0.25\textwidth, minimum height=1cm,text centered, fill=DarkOrange!10, draw=DarkOrange, thick]
\tikzstyle{chart_node_theory} = [rectangle, rounded corners, text width=0.25\textwidth, minimum height=1cm,text centered, draw=MediumAquamarine, thick, fill=MediumAquamarine!10]
\newcommand\new[1]{{\color{black}#1}}
\theoremstyle{plain}
\newtheorem{theorem}{Theorem}
\newtheorem{proposition}[theorem]{Proposition}
\newtheorem{lemma}[theorem]{Lemma}
\theoremstyle{definition}
\newtheorem{assumption}[theorem]{Assumption}
\theoremstyle{remark}
\newtheorem{remark}[theorem]{Remark}
\newcommand{\zvec}{\mathbf z}
\newcommand{\Pin}{\Pi_{\textrm{in}}}
\newcommand{\Pinx}{\Pi_{\textrm{in},\mathbf{x}}}
\newcommand{\Pout}{\Pi_{\textrm{out},\mathbf{x}}}
\newcommand{\cJ}{\mathcal J}
\newcommand{\innerprod}[1]{\left\langle #1 \right\rangle}
\newcommand{\Ntilde}{\tilde N}
\newcommand{\Ktilde}{\tilde K}
\newcommand{\bd}[1]{\cB(#1)}
\newcommand{\uveco}{\uvec^{\textrm{opt}}}
\newcommand{\tuveco}{\tilde{\uvec}^{\textrm{opt}}}
\newcommand{\cTinv}{\mathcal T^{-1}}
\newcommand{\normp}[1]{(\norm{ #1} + 1)}
\newcommand{\cJhat}{\hat\cJ}
\newcommand{\cT}{\mathcal T}
\newcommand{\cR}{\mathcal R}
\newcommand{\PX}{P_X}
\newcommand{\cG}{\mathcal G}
\newcommand{\cD}{\mathcal D}
\newcommand{\Stildein}{\tilde S_{\textrm{in}}}
\newcommand{\Stildeout}{\tilde S_{\textrm{out}}}
\newcommand{\zhat}{\hat z}
\newcommand{\R}{\mathbb R}
\newcommand{\HS}{\mathrm{HS}}
\newcommand{\A}{\mathbf A}
\newcommand{\B}{\mathbf B}
\newcommand{\K}{\mathbf K}
\newcommand{\xvec}{\mathbf x}
\newcommand{\Hspace}{\mathcal H}
\newcommand{\uvec}{\mathbf u}
\newcommand{\Xcal}{\mathcal X}
\newcommand{\tildezvec}{\tilde{\mathbf z}}
\newcommand{\wvec}{\mathbf w}
\newcommand{\Zx}{Z_{\xvec}}
\newcommand{\Su}{S_{\uvec}}
\newcommand{\Sx}{S_{\xvec}}
\newcommand{\Atilde}{\tilde A}
\newcommand{\Btilde}{\tilde B}
\newcommand{\Ptilde}{\tilde P}
\newcommand\de{:=}
\newcommand\ie{i.e.\ }
\newcommand\wrt{w.r.t.\ }
\newcommand\irange[1]{\brk*{#1}}
\newcommand{\st}{\text{ s.t. }\xspace}
\newcommand{\tiid}{i.i.d.\ }
\newcommand\numberthis{\addtocounter{equation}{1}\tag{\theequation}}
\newcommand\restr[2]{{
  \left.\kern-\nulldelimiterspace 
  #1 
  \vphantom{\big|} 
  \right|_{#2} 
  }}
\DeclareMathOperator*{\argmin}{arg\,min}
\DeclareMathOperator*{\spa}{span}
	\newcommand{\V}[1]{\bm{#1}} 
\DeclareMathOperator*{\ran}{ran}
\newcommand{\kron}{\otimes}
\newcommand{\vvec}[1]{\Bigr[\def\arraystretch{0.8}\begin{array}{cc}#1\end{array}\Bigr]}
\newcommand{\bN}{\mathbb{N}}
\newcommand{\bR}{\mathbb{R}}
\DeclarePairedDelimiter{\prt}{(}{)}
\DeclarePairedDelimiter{\brk}{[}{]}
\DeclarePairedDelimiter{\cb}{\{}{\}}
\let\norm\relax
\DeclarePairedDelimiter{\norm}{\lVert}{\rVert}
\DeclarePairedDelimiter{\n}{\lVert}{\rVert}
\DeclarePairedDelimiter{\ip}{\langle}{\rangle}
\DeclarePairedDelimiter{\nHS}{\lVert}{\rVert_{\textup{HS}}}
\xdef\csname V\x \endcsname{\noexpand\ensuremath{\noexpand\V{\x}}}
\xdef\csname V\x \endcsname{\noexpand\ensuremath{\noexpand\V{\x}}}
\xdef\csname c\x \endcsname{\noexpand\ensuremath{\noexpand\mathcal{\x}}}
\xdef\csname f\x \endcsname{\noexpand\ensuremath{\noexpand\mathfrak{\x}}}
\newcommand{\nnew}[1]{{\color{black}#1}}
\newcommand\CL{\cL}
\newcommand\CLHo{\CL(\cH₁)}
\newcommand{\kr}{k} 					
\newcommand\supk{κ²}
\newcommand\supfmap{κ}
\newcommand{\G}{G_γ}				
\newcommand{\nG}{\tilde G_γ}				
\renewcommand{\A}{A_γ}						
\renewcommand{\B}{B_γ}						
\renewcommand{\K}{K}						
\newcommand{\nA}{\tilde A_γ}				
\newcommand{\nB}{\tilde B_γ}				
\newcommand{\nK}{\tilde K}				
\newcommand{\nN}{\tilde N}				
\newcommand{\nP}{\tilde P}				
\newcommand{\Bout}{U_{\textup{out},\mathbf{x}}}	
\newcommand{\Hout}{\tilde\cH_{\textup{out}}} 
\newcommand{\Hin}{\tilde\cH_{\textup{in}}} 
\newcommand{\Kmout}{K_{m,\textup{out}}} 
\newcommand\rfK{H_γ}	
\newcommand\ldm[1]{\tilde{x}_{#1}}	
\DeclareDocumentCommand\ldmi{g}{\tilde{\mathbf{x}}^{\text{in}}\IfNoValueF{#1}{_{#1}}}	
\DeclareDocumentCommand\ldmo{g}{\tilde{\mathbf{x}}^{\text{out}}\IfNoValueF{#1}{_{#1}}}	
\title{\bfseries{Linear quadratic control of nonlinear systems with Koopman operator learning and the Nystr\"om method}}
\author[1,5]{Edoardo Caldarelli}
\author[3,6]{Antoine Chatalic}
\author[1]{Adri\`a Colom\'e}
\author[3]{Cesare Molinari}
\author[1,2]{\\Carlos Ocampo-Martinez}
\author[1]{Carme Torras}
\author[3,4,5]{Lorenzo Rosasco}
\affil[1]{Institut de Rob\`otica i Inform\`atica Industrial, CSIC -- UPC, Barcelona, Spain}  
\affil[2]{Automatic Control Department (ESAII), Universitat Politècnica de Catalunya - BarcelonaTECH, Spain}  
\affil[3]{MaLGa Center -- DIBRIS -- Universit\`a di Genova, Genoa, Italy}             
\affil[4]{CBMM -- Massachusets Institute of Technology, Cambridge, MA, USA}        
\affil[5]{Istituto Italiano di Tecnologia, Genoa, Italy}        
\affil[6]{CNRS, Univ. Grenoble-Alpes, GIPSA-lab, France} 
\affil[ ]{\textbf{Correspondence to:} \texttt{edoardo.caldarelli@iit.it}}
\date{}
\begin{document}
	
	\maketitle
	
	\begin{abstract}        
		In this paper, we study how the Koopman operator framework can be combined with kernel methods to effectively control nonlinear dynamical systems. While kernel methods have typically large computational requirements, we show how random subspaces (Nystr\"om approximation) can be used to achieve  huge computational savings while preserving accuracy.
		Our main technical contribution  is deriving theoretical guarantees on the effect of the Nystr\"om approximation.  More precisely, we study the linear quadratic regulator problem, showing that the approximated Riccati operator \new{converges at the rate $m^{-1/2}$}, and the regulator objective, for the associated solution of the optimal control problem, \new{converges at the rate $m^{-1}$}, where $m$ is the random subspace size. Theoretical findings are complemented by numerical experiments corroborating our results.

	\end{abstract}
	
	\paragraph*{Keywords:}                         
	Koopman operator, kernel methods, Nyström method, linear quadratic regulator, data-driven methods


\begin{figure*}
	\centerline{
		\begin{tikzpicture}[auto, node distance=2cm,>=latex']
			
			\node [chart_node_algorithm] (Simulate) {Nonlinear dynamical system $\mathbf x_{k+1} = f(\mathbf x_k, \mathbf u_k)$.};

			\node [chart_node_algorithm, right= of Simulate,xshift=-1.5cm] (Lift) {Lift system to RKHS, $\infty$-dim.\ system $z_{k+1} = \G \begin{bmatrix}z_k\ \mathbf u\end{bmatrix}^T$.};
			
			\node [chart_node_algorithm, right= of Lift,xshift=-1.5cm] (Sketch) {Finite-dim.\ approximation with Nyström method, $\tilde z_{k+1} 
				=  \nG \begin{bmatrix}\tilde z_k\  \mathbf u\end{bmatrix}^T$.};
			
			\node [chart_node_lqr, below=of Sketch, yshift=1.5cm] (Identify) {\textbf{System identification:} Learn $\nG$ with sketched kernel ridge regression.};
			
			\node [chart_node_lqr, left= of Identify, xshift=1.5cm] (LQR) {\textbf{Control:} Solve the LQR problem with $\nG$.};	
			
			\node [chart_node_theory, below = of Identify,yshift=1.6cm] (Convergence_dynamics) {Fix training set. For any regularization $\gamma$, $\lVert \G - \nG\rVert \lesssim \mathcal O(1/\sqrt m)$.};
			
			\node [chart_node_theory, below= of LQR,yshift=1.6cm] (Convergence_LQR) {The optimal control with $\nG$ is close to the one we would obtain with $\G$.};
			
			\node (A) [left= of Simulate]  {};
			\node (B) [below=of A.west, anchor=west]  {};
			\node (C) [below = of B.west, anchor=west]  {};
			
			\node[fit= (A) (Simulate) (Lift) (Sketch), rounded corners, draw, dashed, inner sep=0.05cm,align=left, very thin] (modelling){\textbf{Modelling}};
			\node[fit= (B) (Identify) (LQR), rounded corners, draw, dashed, inner sep=0.05cm, align=left, very thin] (learning){\textbf{Learning \& control}};
			\node[fit= (C) (Convergence_dynamics) (Convergence_LQR), rounded corners, draw, dashed, inner sep=0.05cm, align=left, very thin, yshift=0.1cm] (theory){\textbf{Guarantees}};

			
			\draw [->] (Simulate) -- (Lift);
			\draw [->] (Lift) -- (Sketch);
			\draw [->] (Sketch) -- (Identify);
			\draw [->] (Identify) -- (LQR);
			\draw [->] (LQR) -- ++ (-4,0) -| node [pos=0.99] {} (Simulate) node[midway, xshift=2.9cm, yshift=-0.2cm] (m) {Optimal gain for state feedback};
			
			\draw [->] (Identify) -- (Convergence_dynamics);
			\draw [->] (LQR) -- (Convergence_LQR);
			\draw [->] (Convergence_dynamics) -- (Convergence_LQR);
			\draw [dashed, ->, color=Crimson] (Lift) -- (LQR) node[midway] (m) {\textcolor{red}{{Inefficient}}};
			
	\end{tikzpicture}}
	\caption{Summary: given some controls and corresponding state trajectories of a nonlinear dynamical system, we use kernels 
	to build a linear, data-driven model of the system. Kernels yield \new{a computationally inefficient} representation of the state space, \new{due to the inversion of the kernel matrix}, which we render \new{computationally tractable} using the Nyström method. 
	}
	\label{fig:summary}
\end{figure*}
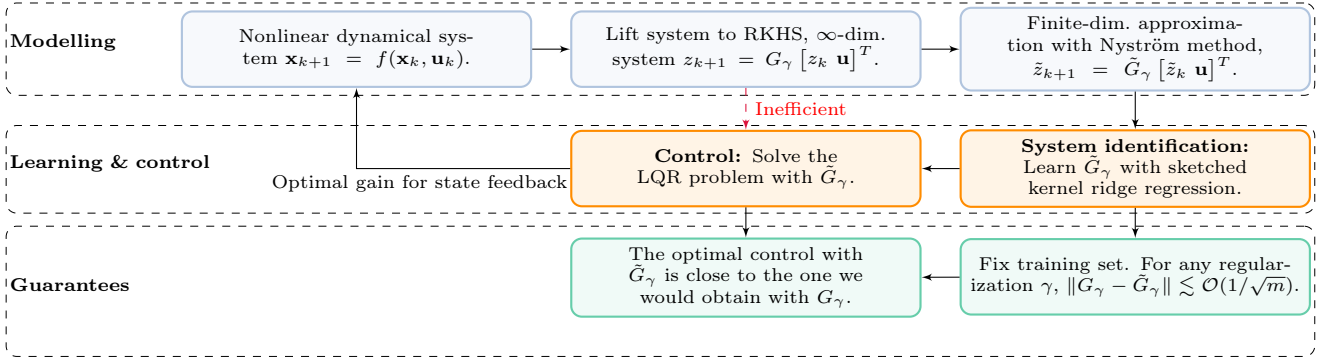
%
\section{Introduction}
Nonlinear dynamical systems are ubiquitous, and pose a great challenge  in terms of system identification and control.  
A powerful approach to deal with nonlinear dynamical systems is provided by the \emph{Koopman operator} framework~\cite{koopman1931hamiltonian,bevanda2021koopman,mezic2021koopman,brunton2022modern}.
In this approach, the nonlinear dynamical system is transformed through a set of nonlinear functions, called \emph{observables}, so that the dynamics of the transformed states are \emph{linear}, 
and can be used to reconstruct the states of the system. 
The Koopman operator, at the basis of this technique, was originally introduced in the context of autonomous dynamical systems~\cite{koopman1931hamiltonian}. However, as shown in the seminal work of Korda et al.~\cite{korda2018linear}, the Koopman operator formulation allows to apply linear control techniques, which are well understood and efficient to compute.
As discussed by~\cite{brunton2022modern,otto2021KoopmanOperatorsEstimation}, the control input can be included in the Koopman framework by either defining a family of Koopman operators (one for each value of the control input)~\cite{peitz2019koopman,nuske2023finite}, or by directly extending the state space to include the control input as an additional state of the system.  The latter perspective is the one we consider in this paper.
A key challenge  to apply the Koopman operator approach is to choose the suitable space of observable 
functions, both with and without  considering the control input ~\cite{korda2018linear}. 
Common choices include splines~\cite{korda2018linear}, polynomial or Fourier bases~\cite{abraham2017model}, and neural networks~\cite{shi2022deep,yin2022embedding,hao2024deep}, see also ~\cite{korda2020optimal,kaiser2021data,gibson2022koopman}. 

In this paper, we consider observables in a \emph{reproducing kernel Hilbert space} (RKHS)  \cite{aronszajn1950theory}.  Band limited functions, splines and Sobolev spaces are then special cases~\cite{berlinet2011reproducing}.  This choice was already mentioned in~\cite{korda2018linear} and has been recently  analyzed in 
\cite{klus2020kernel,kostic2022learning,das2020koopman,khosravi2023RepresenterTheoremLearning,giannakis2021learning,philipp2024error,bevanda2024koopman}
 for the  forecasting and analysis of dynamical systems. 
Kernel methods are popular in machine learning \cite{scholkopf2002learning}, as an RKHS has the advantage of being a possibly infinite dimensional space and corresponds to universal approximators \cite{steinwart2008support}, while the associated estimators are nonparametric \cite{wasserman2006all} and their computation reduces to  finite-dimensional numerical problems \cite{scholkopf2001generalized}. 
When efficiency is needed, further approximations are however required.  
In this paper,  we use the so-called Nystr\"om method,
\new{ which can be interpreted as a dimensionality reduction technique. More precisely, we approximate the dynamics of the observable functions in the RKHS using projections on random and data-dependent finite-dimensional subspaces} of functions \cite{williams2000using,nystroem1930UeberPraktischeAufloesung}. 
The effect of this approximation has been characterized for supervised machine learning, see, e.g., \cite{rudi2015less,musco2017RecursiveSamplingNystrom} and references therein, and more recently  for dynamical system identification, see~\cite{degennaro2019ScalableExtendedDynamic,meanti2023EstimatingKoopmanOperators}. \new{Besides the Nyström method, as an alternative kernel approximation, Nüske et al.\ \cite{nuske2023efficient} study the case of random Fourier features for dynamical system identification.}

 In this paper, we combine the above ideas and develop an efficient and accurate  control approach for nonlinear dynamical systems, based on using the Koopman operator framework together with kernels and the Nystr\"om method. Our main technical contribution is the analysis of the approximation due to the Nystr\"om method but,  unlike previous works,  we consider a control setting. 
We focus on the linear quadratic regulator (LQR) method, which has appealing theoretical properties, such as an analytic form of the optimal solution, and easily allows to deal with multi-input-multi-output systems.   \new{Our main
	result studies the impact of the Nyström approximation on the optimal control problem. Namely, in a fixed design analysis, we
	quantify the error introduced by the Nyström approximation to an empirical, kernel-based estimator of the nonlinear dynamics. We further show how this error propagates to the solution of the optimal control problem, if such an empirical estimator is employed in an LQR.} Instrumental to our analysis are recent results on  learning and controlling \emph{linear} systems \cite{simchowitz2018learning,mania2019certainty,dean2020sample}.  
We complement our theoretical analysis with some numerical experiments. 
The combination of the Koopman approach and the LQR~\cite{moyalan2023data}  has been applied to challenging robot learning problems~\cite{abraham2019active,yin2022embedding}, also involving soft robots~\cite{haggerty2023control}. Here, we assess the proposed control pipeline on a classic control benchmark, i.e., the Duffing oscillator \cite{korda2020optimal}, and on the compelling problem of identifying the dynamics of cloth \cite{coltraro2022inextensible,luque2022model,amadio2023controlled,zheng2022mixtures}.
To conclude, we observe that concurrent to our work, Driessen~\cite{driessen2023koopman} also considers kernel-based methods and their Nyström approximation for control purposes, but without any theoretical guarantees, and learning a bilinear~\cite{korda2018linear, moyalan2023data} data-driven system. 

As shown in Fig.~\ref{fig:summary}, our contributions are the following:
\begin{itemize}[topsep=0pt]
	\item we show how the Koopman operator framework for controlled dynamical systems can be used to design linear control by lifting the state-space representation to an RKHS;
	\item we show how the Nyström method can be exploited to derive efficient computations;
	\item we prove finite sample bounds for the convergence of the Nyström data-driven system to the infinite dimensional state-space representation in RKHS;
	\item when using an LQR, we 
	show how these error rates due to the Nyström approximation translate to the associated Riccati operator, and to the optimal control sequence computed by solving the LQR;
	\item we provide a publicly available and open-source implementation of the proposed Nyström-based system identification and optimal control algorithms\footnote{Available at \url{https://github.com/LCSL/nys-koop-lqr}.\looseness=-1} that we test on some illustrative examples.
\end{itemize}
{The structure of this paper is as follows. 
In Section~\ref{s:background}, we report the main technical background of our work.
In Section~\ref{s:koopman_id}, we discuss how to combine reproducing kernels and the Nyström method to obtain linear predictors for nonlinear systems. In Section~\ref{s:kernel_lqr} we discuss how to use such a predictor in an LQR. In Section~\ref{s:theory}, we present the core theoretical contributions of our work, while Section~\ref{s:experiments} contains a numerical evaluation of our identification algorithm and the associated LQR problem.}

\section{Background and notation}
\label{s:background}
 \nnew{In this paper, we consider 
 	a nonlinear controlled discrete dynamical system, 
 	which is approximated for control purposes by a surrogate dynamical system whose dynamics are linear both in the lifted state and in the input variable. This type of heuristic approximation, previously considered for instance by Korda et al.~\cite{korda2018linear}, has been empirically shown to be suited to nonlinear systems that are \emph{affine} in the control input, and have no coupled terms between state and input variables. While this may be seen as a restriction of the class of dynamical systems that can be modeled with our approximation, we can observe that it is already a fairly general model, see, e.g., the dynamical systems we study in Section 6, or the soft robot studied in \cite{haggerty2023control}.}
 

 We denote the state vector as $\mathbf x\in \mathbb R^d$, the output as $\mathbf y\in \mathbb R^{n_y}$, and the control input vector as $\mathbf u\in\mathbb R^{n_u}$. 
 For a sequence of control inputs $(\uvec_t)_{t\in\mathbb N}$, an initial state $\xvec_0$, and a function $f:\R^{d + n_u}\to\R^d$, we are interested in studying 
 deterministic nonlinear dynamical systems described by the difference equation
\begin{equation}
	\mathbf x_{t + 1} = f(\mathbf x_t, \mathbf u_t)\label{eq:controlled_dynsys},
\end{equation}
where $t \in\mathbb N$ denotes the discrete-time instant. \new{As mentioned before, the nonlinear map is considered to be affine in the control variable, and without coupled terms, i.e., for $g:\R^d\to\R^d$, $\mathfrak B:\R^{n_u}\to \R^{d}$,
$
	f\nnew{(\xvec, \uvec)} = g(\xvec) + \mathfrak B \uvec.
$
}The state and the input can be merged in an augmented \emph{finite-dimensional} state, denoted by $\wvec = [\mathbf x^T, \mathbf u^T]^T\in \R^{d + n_u}$. We will use both notations interchangeably throughout the article.
For a suitable space $\cA$ of observable functions $\xi$, the Koopman operator $\cK:\cA\to\cA$ can be defined as
\begin{align}
	(\mathcal K\xi)(\wvec_t) 
	&=\xi(\wvec_{t+1}), 
	\quad \forall \xi∊\cA. \label{eq:koopman_op_with_time}
\end{align}
\new{%
The definition in \eqref{eq:koopman_op_with_time} follows the Koopman with inputs and control (KIC) formalism~\cite{proctor2018generalizing}.
Note that when the control is prescribed by a state-feedback gain, this definition is related to the standard definition of the Koopman operator for the associated autonomous dynamics.
An alternative definition of the Koopman operator involving sequences of controls propagated using the shift operator has also been proposed~\cite{korda2018linear}. 
However, all definitions of the Koopman operator with control suffer from a common drawback, which is that they implicitly assume that a space of observable $\cA$ invariant under the action of the Koopman operator indeed exists.
}


As we will discuss, a fundamental component of our learning framework is the notion of RKHS \cite{aronszajn1950theory}. For an input space $\Xcal$, an RKHS $H$ is a Hilbert space of scalar functions on $\cX$ for which there exists a $k:\Xcal \times \Xcal \rightarrow \R$, the \emph{reproducing kernel}, so that, for \new{any $\chi \in \Xcal$ and $f \in H$, it holds} $k(\chi, \cdot)\in H$ and $f(\chi) = \innerprod{f, k(\chi, \cdot)}_{H}$. The latter notation denotes the inner product in the RKHS. An RKHS is a potentially infinite-dimensional space with universality properties.
Some examples of reproducing kernels are given by the Gaussian kernel, or by the Matérn family of kernels, which can be linked to Sobolev spaces. 
Note that the Koopman operator is linear in the observable $ξ$. It can thus be seen as a convenient way to build a linear approximation of the original dynamical system, 
at the cost of manipulating the lifted state $ξ(\mathbf w)$.
\new{
In this paper, we take inspiration from the Koopman formalism in order to devise a data-driven approximation of the dynamics in the RKHS suitable to use with an LQR.
We focus on the error induced by the use of the Nyström approximation, however we do not relate the learned dynamics in the RKHS to the definition of the Koopman operator with control in \eqref{eq:koopman_op_with_time}. 
}

\paragraph*{Notations} 
In the following, we use the notation $\n{·}$ for the operator norm. We denote the Euclidean norm of a vector as $\norm{\cdot}_2$. 
For any Hilbert space $\mathbb H$ we denote $\HS(\mathbb H)$ the Hilbert space of Hilbert-Schmidt operators on $\mathbb H$ and $\nHS{·}$ the associated Hilbert-Schmidt norm.
We denote $A^*$ the adjoint of an operator $A$, and $A^\dagger$ the pseudo-inverse of $A$. Besides, $\sigma_{\textrm{min}}(A)$ denotes the smallest singular value of $A$. 
\section{Koopman system identification}

\label{s:koopman_id}

As discussed previously, we focus on approximations of the original dynamical system which are linear
in the control input. 
In the following, we will show how to model such a linearity in the control input. We will also detail the regression problems that are solved in order to retrieve the data-driven, kernel-based dynamics, and show how such kernel-based dynamics can be transformed to vector-valued dynamics by leveraging the Nyström method.

\subsection{Choosing the Koopman lifting function}
	In order to define a suitable state and control transformation, we consider the RKHS $\cH₁$ associated to a stationary positive definite kernel $k:\R^d\times\R^d\to\R$. We let $\psi: \R^d\rightarrow \cH_1$, $\psi(x)\de k(x,·)$.	We define $\cH \coloneqq \cH_1\times \R^{n_u}$
	and choose as our state and input transformation
	\begin{align}
		\phi:\R^{d+n_u}\rightarrow\cH,\ 
		\phi(\wvec) &\coloneqq {\vvec{\psi(\xvec)\\  \uvec}}.
		\label{e:def_phi}
	\end{align}
	The choice of the stationary kernel allows to lift the data through an infinite-dimensional nonlinear transformation. Moreover, $\phi$ is linear in the control input variable, which allows to use linear control techniques on the system of interest. However, $\phi$ is infinite-dimensional for many standard choices of kernel functions. 
	We will show in Section~\ref{s:nystrom}
	that a related finite-dimensional lifting function can be obtained by
	using a Nyström approximation of the kernel $k$.

\subsection{Regression problem and corresponding solutions}  

We now explain how the function $\phi$ introduced in \eqref{e:def_phi}  
can be leveraged
to obtain a linear, data-driven surrogate dynamical system to be used in place of the original nonlinear one when designing the control law. The difference equation of this data-driven model can be estimated by least-squares regression, as we detail in the following.\looseness=-1

	Having a dataset of $n$ training pairs\footnote{\new{We use this notation for simplicity, however in practice one could also put together samples obtained by sampling multiple different trajectories.}} $((\wvec_i, \wvec_{i+1}))_{i=1,\dots,n}$ including control inputs and corresponding states, \new{\ie $\wvec_i\de[\xvec_iᵀ, \uvec_iᵀ]ᵀ$}, we define the sampling operators for the system's state ($\Sx$) and control input ($\Su$) as
	\begin{align}
		S_{\xvec} &:\Hspace_1 \rightarrow\R^n,\
		S_{\xvec} l\new{\de} \tfrac{1}{\sqrt{n}} [l(\xvec_1), \dots, l(\xvec_n)]^T,
		\label{e:def_Sx} \\
		S_{\mathbf u} &:\R^{n_u} \rightarrow\R^n,\
		S_{\uvec} \mathbf u \new{\de} \tfrac{1}{\sqrt{n}}[\langle \mathbf u_1, \mathbf u\rangle, \dots,\langle \mathbf u_n, \mathbf u\rangle]^T.
		\label{e:def_Su} 
	\end{align}%
	\new{The operator $\Sx$ returns a renormalized vector of the evaluations of its input $l$ at the points $(\xvec_i)_{1≤i≤n}$, while the operator $\Su$ corresponds to sampling the linear function $\ip{\mathbf u,·}$ associated to its input at the locations $(\mathbf u_i)_{1≤i≤n}$.}  
	Note that with these definitions, we can define the following compound sampling operator:
	\begin{equation}
		S:\cH_1\times\R^{n_u}\rightarrow\R^n,\ 
		S{\vvec{l\\ \uvec}}  = S_{\xvec} \new{l}+ S_{\uvec} \new{\uvec}.\label{e:def_S}
	\end{equation}
	Moreover, as discussed by~\cite{korda2018linear}, when considering Koopman operator regression for controlled systems, the regression output can be limited to be the one-step-ahead state, i.e., we are not interested in forecasting the evolution of the control input variable. Thus, the sampling operator \new{for the output training points $(\xvec_2,…,\xvec_{n+1})$} can be written as
\begin{equation}
\Zx  :\Hspace_1 \rightarrow\R^n,\
\Zx l = \tfrac{1}{\sqrt{n}} [l(\xvec_{2}), \dots, l(\xvec_{n+1})]^T.
\label{e:def_Zx}
\end{equation}
\paragraph*{Empirical risk minimization} 
The dynamics can be estimated by solving the following 
problem \nnew{over the set of linear operators from $\cH$ to $\cH_1$}:
\begin{align}
	\label{eq:koopman_regression_full}
	\G	\de \arg\min_{W:\mathcal H\rightarrow\mathcal H_1}\mathcal R(W) + \gamma\lVert W\rVert^2_{\HS}
\\\text{where}\quad
	\cR(W) \de \frac1n \sum_{i=1}^{n}\n*{ \psi(\xvec_{i+1})-W ϕ(\wvec_i)}_{\cH_1}² 
	\label{e:def_risk}.
\end{align}
Here $γ>0$ is a regularization parameter, and we recall that $ϕ$ is defined in \eqref{e:def_phi}. Note that this regression problem is not strictly speaking a Koopman regression problem, as the regression input and output spaces are different.
The objective in \eqref{eq:koopman_regression_full} is \nnew{continuous, coercive and strictly convex}, and thus admits a unique \nnew{minimizer}. 
As shown in Appendix~\ref{a:risk}, the risk function can be rewritten as
$\cR(W)= \nHS{\Zx - SW^*}²$ and thus the solution of \eqref{eq:koopman_regression_full} can be expressed as 
\begin{align}
	 \G &= \Zx^*(SS^* + \gamma I)^{-1}S.
	 \label{e:def_G}
\end{align}
\nnew{Although we started the exposition with the definition of the Koopman operator for simplicity, the operator $\G$ is more similar to a regularized conditional mean embedding~\cite{muandet2017KernelMeanEmbedding} or an embedded Perron-Frobenius operator~\cite{klus2020EigendecompositionsTransferOperators}.}

For any initial state $\xvec_0∊\bR^d$, this operator defines the following linear dynamics \nnew{in $\cH_1$}:
\begin{align}[left = \empheqlbrace\,]
	z_0 &= \psi(\xvec_0),
	\label{e:initial_state}\\
	z_{t+1} &=  \G {\vvec{z_t\\ \uvec_t}}.
	\label{eq:dynamics_compound}
\end{align}
\paragraph*{Affine dynamics} 
Note that the operator $\G:\cH→\cH₁$ defined in \eqref{eq:koopman_regression_full} can be decomposed in two operators $\A:\cH₁ → \cH₁$ and $\B:\R^{n_u}→\cH₁$, controlling respectively the parts of the dynamics due to the state and to the control input.
More precisely, defining
\begin{align}
	\A &= \Zx^*(SS^* + \gamma I)^{-1} S_{\xvec},
		\label{e:def_A}\\
	\B &= \Zx^*(SS^* + \gamma I)^{-1} S_{\uvec},
		\label{e:def_B}
\end{align}
the dynamics \eqref{eq:dynamics_compound} are equivalent to the autoregressive linear model
\begin{align}
	z_{t+1} &= \A z_t+ \B \uvec_t
	\label{eq:dynamics}.
\end{align}

\subsection{Nyström approximation} 
\label{s:nystrom}

Given that the lifted state $z$ is typically infinite-dimensional,
we are now interested in designing a finite-dimensional approximation of the dynamics in \eqref{eq:dynamics_compound}, which would be more useful for practical control purposes.
In this section, we thus approximate the nonlinear kernel $\kr$ using a Nyström approximation~\cite{williams2000using}. The approximation is based on the choice of two sets $\ldmi{1}, …,\ldmi{m}$ and $\ldmo{1},…,\ldmo{m}$ of $m$ points, called the input and output landmarks.
Multiple approaches have been studied for landmark selection, but in this paper we consider the simple setting where the landmarks are either all drawn uniformly from the dataset, or the input landmarks are drawn uniformly from the dataset and the output landmarks are then taken one step ahead in time \wrt the input ones. 
 Now, let us define
	$\Hin \de \spa\{\psi(\ldmi{1}), …, \psi(\ldmi{m})\}$, 
	$\Hout\de\spa\{\psi(\ldmo{1}), \dots, \psi(\ldmo{m})\}$, 
and let $\Pinx:\Hspace_1→\Hspace_1, \Pout:\Hspace_1\rightarrow\Hspace_1$ be the orthogonal projectors, respectively onto $\Hin$ and $\Hout$.
Moreover, we define $\Pin:\Hspace\rightarrow\Hspace$ 
as $\Pin \phi(\wvec) = \vvec{\Pinx \psi(\xvec) \\ \uvec}$; as the control variable is already finite-dimensional, we indeed only need to project the lifted state in order to obtain a finite-dimensional approximation. 
Following~\cite{meanti2023EstimatingKoopmanOperators}, we define the Nyström approximation of $\G$ as follows:
\begin{align}
	\nG
	&\de \arg\min_{W:\cH→\cH_1} \cR(\Pout W\Pin) + γ\nHS{W} ^2\nonumber\\
	&=\Pout\Zx^* (S\Pin S^* + \gamma I)^{-1} S\Pin.
	\label{eq:a_b_compound_expr_nystr}
\end{align}
The operator $\nG$ can be used to define linear dynamics approximating the ones from~\eqref{eq:dynamics},
namely for any initial condition $\xvec_0∊\bR^d$:
\begin{align}[left = \empheqlbrace\,]
	\tilde z_0 
		&= \Pout\psi(\xvec_0),\label{e:n_initial_state}\\
	\tilde z_{t+1} 
	&= \nG {\vvec{\tilde z_t\\ \uvec_t}}.
		\label{eq:nystrom_dynamics_compound}
\end{align}
The projection in the initial condition guarantees that all the states visited during the evolution of the system belong to $\Hout$. The dynamics in \eqref{eq:nystrom_dynamics_compound} are still linear in the control input, and the operator $\nG$ could be decomposed into operators $(\nA,\nB)$, approximating the operators $(\A,\B)$ defined in \eqref{e:def_A} and \eqref{e:def_B}.
We choose however to manipulate only $\nG$ in the following to keep expressions more concise.
\paragraph*{Vector-valued representation} 
The dynamics in~\eqref{eq:nystrom_dynamics_compound} are defined for evolving functions: despite being all restricted to a finite-dimensional subspace, the iterates $(\tilde z_t)_{t∊\bN}$ still belong to a functional space. 
In order to retrieve a vector-valued state representation that is practically computable, 
we will thus look at the dynamics of the coordinates of these evolving functions in a basis of the subspace to which they belong.
More precisely, we can define $\Stildeout$ as
\begin{equation}\medmath{
	\Stildeout:\Hspace_1\rightarrow\R^m,\
		\Stildeout g = [g(\ldmo{1}), …,g(\ldmo{m})]ᵀ.
		\label{e:def_Stout}}
\end{equation}
Note that $\new{\Kmout\de}\Stildeout\Stildeout^*\in\R^{m\times m}$ is the Gram matrix of the stationary kernel $\kr$ computed at the output Nyström centers $\tilde\xvec_i^{\textrm{out}}$.
Defining $\Bout=\Stildeout^* (\Kmout^†)^{1/2}$, it holds $\Bout\Bout^*=\Pout$. 

Note that for any initial condition of the form~\eqref{e:n_initial_state}, the Nyström states $\tilde z_t$ defined by \eqref{eq:nystrom_dynamics_compound} belong to $\Hout$ for all $t\in\mathbb N$.
Thus, 
\nnew{define $\tildezvec_t$ as the solution of the following} finite-dimensional autoregressive dynamics
\begin{align}[left = \hspace{-0.5cm}\empheqlbrace]
	\medmath{\tildezvec_0} 
		&\medmath{\de \Bout^*\psi(\xvec_0), }
		\label{e_n_vec_initial_state}
		\\
	\medmath{\tildezvec_{t+1} }
		&\medmath{\!=\! \Bout^*\Zx^* (S\Pin S^*\! +\! \gamma I)^{-1} S\Pin\!{\vvec{\Bout \tildezvec_t\\ \uvec_t}}\!.} 
	\label{eq:nystrom_dynamics_vector_partial}
\end{align}
\nnew{Then, for 
\begin{align}
	\tilde z_t
	&\de \Bout \tildezvec_t,
	\label{eq:parameterized_nystrom_dynamics}
\end{align}
$\tilde z_t$ fulfills \eqref{e:n_initial_state} and \eqref{eq:nystrom_dynamics_compound}.}

As we detail in Appendix~\ref{appendix:computable_expressions}, the dynamics \eqref{eq:nystrom_dynamics_vector_partial} can be rewritten in terms of matrix products that can be computed efficiently, and only requires to invert an $m×m$ matrix.
Note that the dynamics of $\tilde z$ could similarly be expressed in any orthonormal basis of $\Hout$, however working with $\Bout$ naturally yields expressions where the kernel matrix $\Kmout$ appears, which is convenient for implementation.
\paragraph*{State reconstruction} The lifted state $\tildezvec$ can be used to reconstruct the original state $\xvec$, as discussed in~\cite[Section 6]{brunton2022modern}. This goal can be achieved, \nnew{e.g.}, by using a least-squares estimate, or by augmenting the lifted state with the original one. \nnew{Here}, we consider a regularized least squares estimate for the state reconstruction matrix $C$. For a given regularization parameter $λ > 0$, we define
\begin{equation}
	C = \nnew{\argmin_{M\in\R^{d\times m}}}\new{\frac{1}{n}} \sum_{i=1}^{n}\norm{\xvec_{i+1} - M\tildezvec_{i+1}}_2^2 + λ \norm{M}_{\HS}^2.
	\label{eq:reconstruction}
\end{equation}
A closed-form expression for matrix $C$ is also included in Appendix~\ref{appendix:computable_expressions}. 
\new{Note that the training control variables appear in \eqref{eq:reconstruction} only through the definition of the dynamics, and it would also be possible to use instead an arbitrary dataset of sampled states.}

\section{Kernels and Koopman LQR}
\label{s:kernel_lqr}
Once we have estimated the state-space representation of the dynamical system of interest, we can use linear predictive control techniques, as the Koopman approach transforms a non-linear system in a linear one~\cite{korda2018linear}. 
In the following, we focus on the LQR, 
and more particularly on the infinite horizon LQR~\cite{mania2019certainty} 
due to its theoretically appealing properties. 
The key idea is that the optimization problem at the basis of the LQR is rendered tractable by using a finite dimensional embedding of the state and inputs, which can be achieved by sketching techniques, as discussed in Section~\ref{s:koopman_id}.  
\paragraph*{LQR for exact dynamics}  
We first consider using the exact kernel $\kr$ and the transformation $ϕ$. 
We consider a control objective that is quadratic in the lifted state and control inputs, via the weighting operators $Q: \mathcal H_1\rightarrow\mathcal H_1$ and $ R:\R^{n_u}\rightarrow\R^{n_u}$. 
Then, the LQR strategy requires to solve the following optimal control problem, for which a time-invariant analytical solution is available: 
\begin{align}
	\label{eq:lqr_koopman_infinite_dimensional}
	\min_{\mathbf u_0, \uvec_1,\dots}\lim_{T\rightarrow \infty}&\sum_{i=0}^{T}\langle  z_i, Q z_i\rangle_{\mathcal H_1} +
	\langle \mathbf u_i,  R\mathbf u_i\rangle_{\R^{n_u}}\\
	\mathrm{s.t.\ }  &\eqref{e:initial_state}, \eqref{eq:dynamics_compound}.
	\nonumber
\end{align}
\paragraph*{LQR for approximated dynamics} 
When the dynamics are approximated by using the Nyström approach as in \eqref{eq:nystrom_dynamics_compound}, the optimal control problem becomes
\begin{align}
	\label{eq:lqr_koopman_nystrom_infinite_dimensional}
	\min_{\mathbf u_0, \uvec_1,\dots}\lim_{T\rightarrow \infty}\sum_{i=0}^{T}&\langle  \tilde z_i, Q \tilde z_i\rangle_{\mathcal H_1} + 
	\langle \mathbf u_i,  R\mathbf u_i\rangle_{\R^{n_u}}\\
	\nonumber
	\mathrm{s.t.\ }  &\eqref{e:n_initial_state}, \eqref{eq:nystrom_dynamics_compound}.
\end{align}
According to~\eqref{eq:parameterized_nystrom_dynamics}, the problem in~\eqref{eq:lqr_koopman_nystrom_infinite_dimensional} can equivalently be rewritten in the basis $\Bout$ for weighting matrices $\tilde Q\in \R^{m\times m}$,
\begin{equation}
\tilde Q=(\Kmout^\dagger)^{1/2}\Stildeout Q\Stildeout^*(\Kmout^\dagger)^{1/2},\label{eq:Q_definition}
\end{equation} and $R$, as follows:
\begin{align}
	\label{eq:lqr_koopman_nystrom_finite_dimensional}
	\min_{\mathbf u_0, \uvec_1,\dots}\lim_{T\rightarrow \infty}
		\sum_{i=0}^{T}&\langle  \tildezvec_i, \tilde Q \tildezvec_i\rangle_{\mathcal \R^m} 
	+ \ip{ \mathbf u_i,  R\mathbf u_i }_{\R^{n_u}}\\
	\nonumber
	\mathrm{s.t.\ } &\eqref{e_n_vec_initial_state}, \eqref{eq:nystrom_dynamics_vector_partial}.%
\end{align}
\nnew{Based on \eqref{eq:Q_definition}, problems \eqref{eq:lqr_koopman_nystrom_infinite_dimensional} and \eqref{eq:lqr_koopman_nystrom_finite_dimensional} are equivalent, and yield the same optimal control sequence in form of state feedback~\cite{hager1976convergence}}. Practically, one can choose $\tilde Q=C^*Q'C$ for some $Q':\R^d\to\R^d$, and $C$ is the reconstruction matrix defined in \eqref{eq:reconstruction} in order to define an objective that can be interpreted as a penalization of the states. Denoting the optimal gain resulting from~\eqref{eq:lqr_koopman_nystrom_infinite_dimensional} as $\tilde K: \cH_1\rightarrow \R^{ n_u}$, the optimal control law is 
$
	\uvec_k = \tilde K\tilde z_k.
$
From~\eqref{eq:parameterized_nystrom_dynamics}, we have that the state-feedback input can be rewritten as
$
	\uvec_k = \tilde K\Bout\tilde \zvec_k,
$
with $ \tilde K\Bout:\R^m\rightarrow \R^{n_u}$ being the solution of the LQR problem in~\eqref{eq:lqr_koopman_nystrom_finite_dimensional}. When dealing with the true nonlinear dynamics in Section~\ref{s:experiments}, we consider a state-feedback control law of the form
\begin{equation}
	\uvec_k = \tilde K\Pout\psi(\xvec_k),\label{e:ctrl_policy}
\end{equation}
where $\xvec_k$ is the true state of the system, to perform control in closed loop.
\nnew{\paragraph*{Overall pipeline} The content of this section can be summarized in a whole pipeline that can be used for linear identification and control of nonlinear systems. Such a pipeline consists of the following steps:
\begin{enumerate}[topsep=0pt]
	\item sample Nyström input and output landmarks $\ldmi{1}, …,\ldmi{m}$ and $\ldmo{1},…,\ldmo{m}$ uniformly from the training set;
	\item compute the operators in~\eqref{eq:nystrom_dynamics_vector_partial} to obtain a data-driven linear system;
	\item use these linear dynamics to solve problem~\eqref{eq:lqr_koopman_nystrom_finite_dimensional} and get the optimal state-feedback gain.
\end{enumerate} }
\section{Theoretical analysis}
\label{s:theory}
In this section, we perform a theoretical analysis of the proposed system identification method, and assess its effect on the LQR problem. In particular, we will compare the Nyström-based approach with the data-driven model based on the exact kernel. Overall, we show that the Nyström-based model of~\eqref{eq:nystrom_dynamics_compound} is a provably accurate approximation of the dynamics in~\eqref{eq:dynamics_compound}, that can be safely used for control purposes, in place of the intractable infinite-dimensional model of ~\eqref{eq:dynamics_compound}.
\paragraph*{Layout} In Section~\ref{s:hypotheses}, we state the assumptions at the basis of our analysis. In Section~\ref{s:bounds_A_B}, we bound the error, introduced by the Nyström approximation, on operators $\A$ and $\B$ from~\eqref{eq:dynamics_compound} (compound in the transition operator $\G$). In Section~\ref{s:bounds_riccati}, we show that the Riccati operator obtained with the Nyström dynamics is close to the one obtained with the exact kernel. Finally, in Section~\ref{s:convergence_lqr_objective}, we use the aforementioned results to show that, when plugging the optimal control from~\eqref{eq:lqr_koopman_nystrom_infinite_dimensional} into the dynamics of~\eqref{eq:dynamics_compound}, the LQR objective function is close to the one obtained when solving~\eqref{eq:lqr_koopman_infinite_dimensional} directly.
\subsection{Hypotheses}
\label{s:hypotheses}
In this section, we introduce the hypotheses of our theoretical derivations in the next sections.
\begin{assumption}[Bounded kernel]
	\label{a:bounded_kernel}
	The stationary kernel $k$ is bounded, i.e., \new{there exists a positive constant $\supfmap<∞$ such that} $k(\xvec, \xvec')\leq \supk$ \new{for any $\xvec,\xvec'∊\bR^d$}.
\end{assumption}

Note that under Assumption \ref{a:bounded_kernel}, it holds in particular
$
	\norm{\Zx} \leq κ $
	and$
	\norm{\Sx} \leq κ.
$
Indeed, for any $x$, it implies $\n{ψ(x)}=\sqrt{k(x,x)}≤κ$, and thus for any $g∊\cH_1$ with $\n{g}≤1$, it holds
\begin{align*}
	\n{\Zx g}_2
	&= n^{-1/2}\n{[\ip{g, ψ(\xvec_{2})}, \dots, \ip{g, ψ(\xvec_{n+1})}]ᵀ}_2\\
	 &≤ n^{-1/2}\left(\sum_{i=2}^{n+1} \n{g}²κ² \right)^{1/2} 
	 ≤ κ, 
\end{align*}
and a similar argument holds for $\Sx$.

The following assumptions are standard and guarantee that the LQR problems in~\eqref{eq:lqr_koopman_infinite_dimensional} and~\eqref{eq:lqr_koopman_nystrom_infinite_dimensional} are well-posed and admit an analytical solution in the form of a static state-feedback gain~\cite{hager1976convergence}. Minimal assumptions on $\A$, $\B$ from~\eqref{e:def_A} and~\eqref{e:def_B}, and $C$ from~\eqref{eq:reconstruction}, for stabilizability and detectability to hold, could be derived by leveraging the theoretical deployments reported, e.g., in~\cite{bensoussan2007representation}. 
\begin{assumption}[Stabilizability, \new{\cite{hager1976convergence}}]
	\label{a:stabilizability}
	The dynamical systems in~\eqref{eq:dynamics_compound} and~\eqref{eq:nystrom_dynamics_compound} are \emph{stabilizable}, i.e.,
	$
		\exists M:\Hspace_1\rightarrow \R^{n_u} \textrm{\ such that\ }{\rho(\A + \B M)<1}$, and $
		\exists \tilde M:\Hspace_1\rightarrow \R^{n_u} \textrm{\ such that\ }{\rho(\nA + \nB\tilde M)<1},\nonumber
	$
	where $\rho(W)$ is the spectral radius of operator $W$. 
\end{assumption}
Stabilizability assumes that the dynamical systems can be driven to zero with a suitable state-feedback gain. This is a weaker assumption than controllability (i.e., assuming that the system can be driven to any location via the choice of suitable feedback gain), that is often used in the analysis of the LQR algorithm~\cite{mania2019certainty}. However, the controllability assumption in $\cH_1$ is not satisfied by the Nyström dynamics, as the $\tilde z$ functions always live in the subspace $\Hout$, as we discussed in Section~\ref{s:nystrom}. 
\begin{assumption}[Detectability, \new{\cite{hager1976convergence}}]
	\label{a:detectability}
	The dynamical systems in~\eqref{eq:dynamics_compound} and~\eqref{eq:nystrom_dynamics_compound} are \emph{detectable}, i.e.,
	\begin{align}
		&\exists t,s,b,d\geq 0 \textrm{\ such that\ }\norm{\A^tz}_{\cH_1}\geq b\norm{z}_{\cH_1}\nonumber\\
		&\quad\Rightarrow \innerprod{z, \sum_{i=0}^s\A^{i*}Q\A^iz}_{\cH_1}\geq d\innerprod{z, z}_{\cH_1},\nonumber
	\end{align}
\begin{align}
		&\exists t,s,b,d\geq 0 \textrm{\ such that\ }\norm{\nA^t\tilde z}_{\cH_1}\geq b\norm{\tilde z}_{\cH_1}\nonumber\\
		&\quad\Rightarrow \innerprod{\tilde z, \sum_{i=0}^s\nA^{i*}Q\nA^i\tilde z}_{\cH_1}\geq d\innerprod{\tilde z, \tilde z}_{\cH_1}.\nonumber
	\end{align}
\end{assumption}
Detectability assumes that, if unstable dynamics happen in the linear system, these must be observed (and taken into account in the design of the LQR objective function).

\new{
\begin{assumption}
	Let $Q$, $R$ be the weights of the LQR problem \eqref{eq:lqr_koopman_infinite_dimensional}, and let $P$ be a solution of the discrete algebraic Riccati equation
	\begin{equation}
		 P =  A^*PA -  A^*PB( R + B^*  P B)^{-1}B^* P A+  Q.\label{e:dare_asm}
	\end{equation}
Then, $\sigma_{\textrm{min}}(P)\geq1$.
\end{assumption}
As discussed by Mania et al.~\cite{mania2019certainty}, this technical assumption can be fulfilled by re-scaling $Q$ and $R$ accordingly. Indeed, if $P$ is a solution of \eqref{e:dare_asm}, then $\eta P, \eta >0$ is also a solution, provided that $Q$ and $R$ are multiplied by $\eta$.}
\subsection{Accuracy of the Nyström approximation of the transition operator}
\label{s:bounds_A_B}

We now upper-bound the error (in operator norm) induced by the Nyström approximation on the transition operator $\G$.
Although other works studied sketched estimators of the Koopman operator,  
our bound notably differs from \cite{ahmad2023sketch,meanti2023EstimatingKoopmanOperators} who consider different norms (Hilbert-Schmidt and operator norm but on different spaces) and dynamical systems without control.

\begin{theorem}[Convergence rate for $\nG - \G$]\label{r:bound_G_nG_opnorm}
	Under Assumption~\ref{a:bounded_kernel}, for any $γ>0$ , it holds with probability $1-δ$ that
	\begin{align}
		\n{\nG-\G}
		&≤ \prt*{\frac{κ}{\nnew{γ}}+\new{\frac{1}{\gamma^{1/2}}}} 4\supfmap\sqrt{\frac{3}{m}\log\prt*{\frac{8m}{5δ}}} \nonumber\\
		&\quad	+ \frac{48\supfmap^3}{γ^{3/2}} \frac{1}{m}\log\prt*{\frac{8m}{5δ}}.
		\label{e:bound_G_nG_opnorm}
	\end{align}
\end{theorem}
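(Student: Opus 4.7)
The strategy is to combine an algebraic decomposition of $\nG - \G$ with a matrix-Bernstein bound on the perturbation of the input and output covariances induced by the Nyström projection. The first step is to put both operators in a comparable form via the push-through identity $(SS^* + \gamma I)^{-1} S = S(S^*S + \gamma I)^{-1}$ (and its Nyström-restricted analogue, using that $\Pin$ commutes with $\Pin S^*S \Pin$), which gives $\G = \Zx^* S(S^*S + \gamma I)^{-1}$ and $\nG = \Pout \Zx^* S\Pin(\Pin S^*S\Pin + \gamma I)^{-1}\Pin$. Telescoping through two intermediate operators yields
\begin{align*}
\nG - \G &= (\Pout - I)\G \\
&\quad + \Pout\Zx^*\bigl[(S\Pin S^* + \gamma I)^{-1} - (SS^* + \gamma I)^{-1}\bigr]S\Pin \\
&\quad + \Pout\Zx^*(SS^* + \gamma I)^{-1} S(\Pin - I).
\end{align*}
The first and third terms are linear in the projection gap and are bounded via operator-norm estimates combined with the functional-calculus bound $\|S(S^*S + \gamma I)^{-1}\| \leq (2\sqrt{\gamma})^{-1}$. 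For the central term, the resolvent identity $(A+\gamma I)^{-1} - (B+\gamma I)^{-1} = (A+\gamma I)^{-1}(B - A)(B+\gamma I)^{-1}$ reveals the input-covariance perturbation $SS^* - S\Pin S^* = S_{\xvec}(I - \Pinx)S_{\xvec}^*$, while the product of two resolvents contributes a $\gamma^{-2}$ prefactor.

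The second step is the concentration argument. Letting $T_m^{\mathrm{in}} = m^{-1}\sum_{j} \psi(\ldmi{j})\otimes\psi(\ldmi{j})$ and $T_m^{\mathrm{out}}$ denote the empirical input and output landmark covariances, and using that $\Pinx$ projects onto $\mathrm{range}(T_m^{\mathrm{in}})$ (so $T_m^{\mathrm{in}}(I - \Pinx) = 0$), one obtains $\|S_{\xvec}(I - \Pinx)S_{\xvec}^*\| = \|(I - \Pinx)(S_{\xvec}^*S_{\xvec} - T_m^{\mathrm{in}})(I - \Pinx)\| \leq \|S_{\xvec}^*S_{\xvec} - T_m^{\mathrm{in}}\|$, and symmetrically $\|\Zx(I - \Pout)\Zx^*\| \leq \|\Zx^*\Zx - T_m^{\mathrm{out}}\|$. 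A matrix-Bernstein inequality applied to the i.i.d.\ rank-one summands (each bounded in operator norm by $\supfmap^2$ thanks to Assumption~\ref{a:bounded_kernel}) then gives, with probability at least $1-\delta$, $\|S_{\xvec}^*S_{\xvec} - T_m^{\mathrm{in}}\|$ and $\|\Zx^*\Zx - T_m^{\mathrm{out}}\|$ both of order $\supfmap^2\sqrt{\log(m/\delta)/m}$. Substituting these into the decomposition produces the first-order contribution of order $(\kappa/\gamma + 1)\,\supfmap\sqrt{\log(m/\delta)/m}$ and the quadratic correction of order $\supfmap^3/\gamma^{3/2}\cdot\log(m/\delta)/m$ after a union bound.

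The main technical obstacle is reconciling the $m^{-1/2}$ rate of the theorem with the fact that, a priori, the linear projection gap $\|(I - \Pinx)S_{\xvec}^*\|$ only concentrates at the naïve $m^{-1/4}$ rate (via the square root of the covariance perturbation). The trick is to refactor operator products through push-through so that whenever $\|(I - \Pinx)S_{\xvec}^*\|$ threatens to appear alone, a companion factor is produced that converts it into $\|S_{\xvec}(I - \Pinx)S_{\xvec}^*\| = \|(I - \Pinx)S_{\xvec}^*\|^2$, at the cost of an extra $\sqrt{\gamma}$ supplied by functional calculus on $S$. After this refactoring every error contribution is expressed through the covariance perturbations $\|S_{\xvec}^*S_{\xvec} - T_m^{\mathrm{in}}\|$ and $\|\Zx^*\Zx - T_m^{\mathrm{out}}\|$, which enjoy the correct $m^{-1/2}$ rate, and tracking the constants through the decomposition yields the exact form of the bound stated in the theorem.
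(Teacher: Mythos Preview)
Your three-term telescoping decomposition is correct and essentially the same as the paper's (the paper orders the telescope slightly differently but the pieces match). The gap is in the concentration step.

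Your route through $\|(I-\Pinx)S_\xvec^*\|^2 \le \|S_\xvec^*S_\xvec - T_m^{\mathrm{in}}\|$ combined with a plain matrix-Bernstein bound yields $\|S_\xvec^*S_\xvec - T_m^{\mathrm{in}}\| = O(\kappa^2 m^{-1/2})$ and hence only $\|(I-\Pinx)S_\xvec^*\| = O(\kappa\, m^{-1/4})$, exactly the ``naïve rate'' you flag. Your proposed refactoring trick does not rescue the first and third terms. Take your term $(\Pout-I)\G = -(I-\Pout)\Zx^*(SS^*+\gamma I)^{-1}S$: any way you square it you obtain $\|M^*\Zx(I-\Pout)\Zx^* M\|^{1/2}$ with $M=(SS^*+\gamma I)^{-1}S$, which is again bounded by $\|(I-\Pout)\Zx^*\|\cdot\|M\|$ --- the square root reappears and you are back to $m^{-1/4}$. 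There is no companion $\Zx$ on the other side to pair with, so the ``convert to $\|(I-\Pout)\Zx^*\|^2$ at the cost of $\sqrt{\gamma}$'' manoeuvre does not go through here (and symmetrically for the $(\Pin-I)$ term). Consequently you cannot reproduce the $(\kappa/\gamma+1)\cdot O(m^{-1/2})$ leading term; in particular the ``$+1$'' contribution in the stated bound, which comes from $\|(I-\Pout)\Zx^*\|$ alone with no $\gamma$-factor, is unreachable by your argument.

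The paper closes this gap with a genuinely stronger concentration result: rather than bounding $\|\cC-T_m\|$, it invokes Rudi et al.'s lemma to control $\|(I-\Pi_m)(\cC+\gamma I)^{1/2}\|^2 \le 3\gamma$ for $m \gtrsim (\kappa^2/\gamma)\log(\kappa^2/(\gamma\delta))$, and then \emph{optimizes} over the free parameter $\gamma$ (taking $\gamma \asymp \kappa^2 m^{-1}\log(m/\delta)$) to get $\|(I-\Pinx)S_\xvec^*\|,\ \|(I-\Pout)\Zx^*\| \le 4\kappa\sqrt{(3/m)\log(8m/(5\delta))}$ directly. This is a multiplicative/regularized concentration argument, not an additive Bernstein bound; it exploits that $(I-\Pi_m)$ kills the top eigenspace of $T_m$, so the residual lives where $\cC$ is small. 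Your additive bound $\|(I-\Pi_m)\cC(I-\Pi_m)\|\le\|\cC-T_m\|$ throws this away. To repair your proof, replace the matrix-Bernstein step by this regularized-covariance lemma; the rest of your decomposition then goes through and matches the paper's bound.
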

\begin{proof}
	The proof for this result is provided in Appendix~\ref{appendix:nystrom_rate}.
\end{proof}

\color{black}

Note that, if we rewrite the approximate dynamics $\tilde z_{t+1} = \nG \phi(\wvec_t)$ from \eqref{eq:nystrom_dynamics_compound} in the autoregressive form as
\begin{align}
	\tilde z_{t+1} 
		&= \nA ψ(\xvec_t)+ \nB \uvec_t, 
	\label{e:autoregressive_dynamics}
\end{align}
Theorem~\ref{r:bound_G_nG_opnorm} automatically translates in bounds on the approximation of $\A$ and $\B$. Indeed, under the assumptions of Theorem \ref{r:bound_G_nG_opnorm}, the right-hand side of
	\eqref{e:bound_G_nG_opnorm} is also an upper bound on $\n{\A-\nA}$ and $\n{\B-\nB}$.
	Formally, the operators $\nA,\nB$ in \eqref{e:autoregressive_dynamics} can be defined from $\nG$ as $
		\nA 
			= \nG [I_{\cH_1}, 0_{\cH_1\rightarrow\bR^{n_u}}]^* : \cH_1\rightarrow \cH_1 $ and $
		\nB 
			= \nG [0_{\bR^{n_u}\rightarrow \cH_1}, I_{\bR^{n_u}}]^* : \bR^{n_u} \rightarrow \cH_1.
	$
	As a consequence, it holds $\n{\A-\nA}=\n{(\G-\nG) [I_{\cH_1}, 0_{\cH_1\rightarrow\bR^{n_u}}]^*}≤\n{\G-\nG}$, and a similar argument holds for $B$.

\renewcommand{\A}{A}						
\renewcommand{\B}{B}						
\renewcommand{\nA}{\tilde A}				
\renewcommand{\nB}{\tilde B}				
In the following, we will use for simplicity the notations $\A\de \A_γ,\B\de \B_γ,\nA\de \nA_γ,\nB \de \nB_γ$. However, all these operators implicitly depend on the choice of the regularization parameter $γ$.

\subsection{Convergence analysis for the Riccati operator}
\label{s:bounds_riccati}
In this section, we show that the Riccati operators for problems in~\eqref{eq:lqr_koopman_infinite_dimensional} and~\eqref{eq:lqr_koopman_nystrom_infinite_dimensional} are $\epsilon$-close in operator norm, provided that $\norm{\G - \nG}\leq \epsilon$. Note that, according to Theorem~\ref{r:bound_G_nG_opnorm}, we can set $\epsilon$ as function of $m$, namely
$
	\epsilon=\prt*{\frac{κ}{\nnew{γ}}+\new{\frac{1}{\gamma^{1/2}}}} 4\supfmap\sqrt{\frac{3}{m}\log\prt*{\frac{8m}{5δ}}} 
	+ \frac{48\supfmap^3}{γ^{3/2}} \frac{1}{m}\log\prt*{\frac{8m}{5δ}}.
$
This fact means that we transfer the guarantees for $\G$ from Section~\ref{s:bounds_A_B} to guarantees on the fundamental building block of the LQR solution, i.e., the Riccati operator.\looseness=-1

Let
\begin{align}
	\medmath{F(P, A, B)} &\medmath{= P - A^*[P - PB(R + B^* P B)^{-1}B^* P ]A}
	-Q\nonumber\\
	&\medmath{= P - A^*P(I + B R^{-1}B^*P)^{-1}A - Q.}\nonumber
\end{align}
Then, as proven by~\cite[Theorem 9]{hager1976convergence}, under the assumptions in Section~\ref{s:hypotheses}, the LQR problems in~\eqref{eq:lqr_koopman_infinite_dimensional} and~\eqref{eq:lqr_koopman_nystrom_infinite_dimensional} admit analytical solutions given by the static state-feedback gains
\begin{align}
	\K:\cH_1\to\ \R^{n_u},\ \K &= -(R+\BᵀP\B)^{-1}\B^* P\A  
		\label{e:def_K},\\
	\nK:\cH_1\to\ \R^{n_u},\ \nK &= -(R+\nBᵀ \nP \nB)^{-1} \nB^* \nP \nA 
		\label{e:def_nK},
\end{align}
where $P,\Ptilde:\cH₁→\cH₁$ are the unique self-adjoint, positive semi-definite operators obtained by solving the following discrete-time algebraic Riccati equations ~\cite{hager1976convergence}:
\begin{align}
	F(P,A,B)=0,
		\label{e:def_P}\\
	F(\nP,\nA,\nB)=0
		\label{e:def_nP}.
\end{align}
A fundamental step is now to derive an error rate for the solutions, upper bounding the quantity $\norm{P - \Ptilde}$. To do so, we can state and prove an analogous proposition to~\cite[Proposition 2]{mania2019certainty}, based on the fundamental results in~\cite{konstantinov1993perturbation}, and generalized to the case of operator dynamics.

\begin{lemma}[Convergence rate for $\Ptilde - P$]
	\label{lemma:convergence_p_operator}
	Let the systems in~\eqref{eq:lqr_koopman_infinite_dimensional} and~\eqref{eq:lqr_koopman_nystrom_infinite_dimensional} be stabilizable and detectable. Let $\sigma_{\textit{min}}(P)$ be the smallest singular value of the Riccati operator associated to~\eqref{eq:lqr_koopman_infinite_dimensional}, and let $\Ptilde$ be the Riccati operator for~\eqref{eq:lqr_koopman_nystrom_infinite_dimensional}. Assume $R$ is positive definite, and $\sigma_{\textrm{min}}(P) \geq 1$. Let $L=A + BK$, where $K$ is the stabilizing Riccati gain defined in~\eqref{e:def_K}, let $\rho(L)$ be the spectral radius of $L$, and let
	$
		\tau(L, \new{\zeta})=\sup\{\norm {L^k}\new{\zeta}^{-k}, k\geq 0\},\nonumber
	$
	with $\new\zeta$ being a constant such that $\rho(L) \leq\new\zeta < 1$.
	Then, for $\norm{A - \Atilde}\leq \epsilon$, $\norm{B - \Btilde}\leq \epsilon$, and
\begin{align*}
		\epsilon< \min\left\{\norm B,\right.&\left.\frac{1}{12} \frac{1}{\normp{L}^{2} +\normp{P}}\frac{(1 - \new\zeta^2)^2}{\tau(L,\new\zeta)^4}\right.\\
		&\cdot\normp A^{-2} \normp P^{-2} \\
		&\cdot\left.\normp B^{-3}\cdot\normp {R^{-1}}^{-2}\right\},
	\end{align*}
it holds that the error on the Riccati operator due to the Nyström approximation is upper bounded as follows:
\begin{align*}
	\norm{P-\tilde P}&\leq 6\epsilon \frac{\tau(L, \new\zeta)^2}{1 - \new\zeta^2}\normp A^2 \normp P^2 \\
	&\quad\cdot\normp B \normp {R^{-1}}.
\end{align*}
\end{lemma}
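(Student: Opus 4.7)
The strategy is to adapt the matrix perturbation analysis of Konstantinov et al.~\cite{konstantinov1993perturbation} (as used in \cite[Proposition 2]{mania2019certainty}) to the operator setting. The idea is to regard the discrete algebraic Riccati equation $F(P,A,B)=0$ as defining $P$ implicitly as a function of $(A,B)$, linearize around the nominal solution, and close the argument with a Banach fixed-point step. Concretely, writing $\Delta=\tilde P-P$, $\delta A=\nA-\A$ and $\delta B=\nB-\B$, one subtracts \eqref{e:def_P} from \eqref{e:def_nP} and rearranges to obtain an equation of the form
\begin{equation*}
\Delta - L^*\Delta L = \Phi_1(\delta A,\delta B) + \Phi_2(\Delta,\delta A,\delta B),
\end{equation*}
where $L=\A+\B\K$ is the closed-loop operator, $\Phi_1$ is linear in $(\delta A,\delta B)$ and of order $\epsilon$, and $\Phi_2$ collects all higher-order terms (quadratic and above in $\Delta$ or in the perturbations).

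The first key step is to invert the discrete Lyapunov operator $\mathcal{L}_L(X)\coloneqq X-L^*XL$ on self-adjoint operators of $\cH_1$. Because $\rho(L)\leq\rho<1$ and $\n{L^k}\leq\tau(L,\rho)\rho^k$, the series $\mathcal{L}_L^{-1}(Y)=\sum_{k\geq 0}(L^*)^k Y L^k$ converges and obeys
\begin{equation*}
\n{\mathcal{L}_L^{-1}}\leq \frac{\tau(L,\rho)^2}{1-\rho^2}.
\end{equation*}
This is the operator analogue of the matrix estimate used in \cite{mania2019certainty}; only boundedness, self-adjointness, and the spectral radius bound are required, so it transfers verbatim to the RKHS setting. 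I would verify carefully that $L$ inherits the spectral radius bound from the stabilizability assumption and that $P$ is self-adjoint positive semi-definite, as guaranteed by \cite{hager1976convergence} under Assumptions~\ref{a:stabilizability}--\ref{a:detectability}.

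The second key step is the Banach fixed-point argument. Rewriting the subtracted equation as $\Delta = \mathcal{L}_L^{-1}\bigl(\Phi_1(\delta A,\delta B)+\Phi_2(\Delta,\delta A,\delta B)\bigr)$, one defines a map $T:\Delta\mapsto \mathcal{L}_L^{-1}(\Phi_1+\Phi_2(\Delta,\cdot))$ on the ball $\{\Delta:\n{\Delta}\leq r\}$ with $r$ chosen as the target bound $6\epsilon\tfrac{\tau(L,\rho)^2}{1-\rho^2}\n{\A}^2\n{P}^2\n{\B}\n{R^{-1}}$. The smallness assumption on $\epsilon$ in the statement is exactly what is needed to show (i) that $T$ maps the ball into itself, and (ii) that $T$ is a contraction on it. These are algebraic bookkeeping estimates: expand $\Phi_1$ and $\Phi_2$ in terms of $\delta A$, $\delta B$, $\Delta$, bound each summand using sub-multiplicativity of the operator norm, the identity $(R+\nB^*\nP\nB)^{-1}=(R+\B^*P\B)^{-1}+\text{perturbation}$, and the assumed bounds $\n{\delta A},\n{\delta B}\leq\epsilon<\n{B}$ to control denominators via a Neumann-series argument. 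The assumption $\sigma_{\min}(P)\geq 1$ and positive definiteness of $R$ ensure that $(R+B^*PB)^{-1}$ is well defined with controlled norm. The existence of the fixed point then yields $\n{\Delta}\leq r$, which is the desired bound.

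The main obstacle I anticipate is the careful tracking of constants in the expansion of $\Phi_2$: the Konstantinov et al.\ bookkeeping produces a constant of $6$ in front of $r$, and reproducing this numeric factor in the operator setting requires grouping terms in exactly the same way as in the matrix proof, using only that norms are sub-multiplicative and that the adjoint is isometric. All other ingredients (the Lyapunov inversion, self-adjointness, invertibility of $R+B^*PB$) are standard in the bounded-operator setting and do not require finite dimensionality, so no genuinely new ideas beyond those of \cite{konstantinov1993perturbation,mania2019certainty} should be needed; the work is essentially to verify that every matrix identity used there is in fact an identity of bounded linear operators on $\cH_1$.
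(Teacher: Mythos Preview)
Your plan is correct and matches the paper's approach: both linearize the perturbed Riccati equation via the discrete Stein/Lyapunov operator $X\mapsto X-L^*XL$, bound its inverse by $\tau(L,\rho)^2/(1-\rho^2)$ through the Neumann series $\sum_{k\geq 0}(L^*)^k(\cdot)L^k$, and close with a Banach fixed-point argument on a ball of the target radius. One point to sharpen: the assumption $\sigma_{\min}(P)\geq 1$ is not there to control $(R+B^*PB)^{-1}$ (positive definiteness of $R$ already gives that), but to guarantee that $P+X\succeq 0$ whenever $\n{X}\leq\nu\leq 1/2$; this is what lets you identify the Banach fixed point with $\tilde P-P$, since uniqueness of the Riccati solution from~\cite{hager1976convergence} holds only among self-adjoint positive semi-definite operators, and without this identification the contraction argument would be circular.
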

\begin{remark}
	When talking about stabilizability, we are considering the data-driven dynamics described by~\eqref{eq:dynamics_compound} and~\eqref{eq:nystrom_dynamics_compound}. This fact means that we assume these systems are stabilizable in $\cH_1$, not in the original state space $\R^{d}$.
\end{remark}
\begin{proof}
The proof of this result is close to the one of~\cite[Proposition 2]{mania2019certainty} which covers the setting where $A$ and $B$ are matrices. Some technical adjustments are required due to the fact that we work with operators. We report a sketch of the proof in Appendix~\ref{appendix:rates_riccati}.
\end{proof}
\subsection{Convergence analysis of the LQR objective function}
\label{s:convergence_lqr_objective}

In the previous subsections, we showed that $\norm{A - \Atilde}\leq \epsilon$, $\norm{B- \Btilde}\leq \epsilon$ for a sufficiently large number of Nyström landmarks $m$. We further showed that this implies that $\norm{P - \Ptilde}\leq \mathcal O(\epsilon)$. Now, we are interested in assessing how suitable the Nyström dynamics are for solving the LQR problem, instead of using the intractable, infinite-dimensional dynamics associated to an exact kernel. In order to do so, we first solve the LQR problem from \eqref{eq:lqr_koopman_nystrom_infinite_dimensional} based on the Nyström approximation of the dynamics. Then, we plug the optimal control sequence retrieved in this way in the exact kernel dynamics of~\eqref{eq:dynamics_compound}, and compare the LQR objective function with this control sequence, and the control sequence we would obtained by solving the problem in~\eqref{eq:lqr_koopman_infinite_dimensional}. This type of error analysis resembles the one in~\cite{mania2019certainty}, in the context of certainty equivalence. In order to do so, let us define 
two control sequences ($\tuveco_0,\tuveco_1, \dots)$, and ($ \uveco_0, \uveco_1, \dots)$.
 Moreover, for a given initial condition $\hat z_0 = z_0$, let
\begin{align}
	\cJ &\coloneqq \lim_{T\to\infty}\sum_{i=0}^{T}\langle  z_i,  Q z_i\rangle_{\mathcal \cH_1} 
	+ \ip{  \uveco_i,  R\uveco_i }_{\R^{n_u}},\nonumber \\
	&\quad z_{i+1} = A z_i + B \uveco_i, \uveco_i=\K z_i
		\label{eq:obj},
	\end{align}
\begin{align}
	\cJhat &\coloneqq \lim_{T\to\infty}\sum_{i=0}^{T}\langle  \hat z_i,  Q \hat z_i\rangle_{\mathcal \cH_1} 
	+ \ip{  \tuveco_i,  R\tuveco_i }_{\R^{n_u}},\nonumber\\
	&\quad \hat z_{i+1} = A \hat z_i + B \tuveco_i, \tuveco_i=\Ktilde \zhat_i
		\label{eq:nobj}.
\end{align}
Then, we upper bound the error $\cJhat - \cJ$.
\begin{theorem}[Convergence rate for $\cJhat - \cJ$]
	\label{thm:convergence_lqr_objective}
	Let $z_0$ be the initial state of the dynamical system of interest. Let $\rho(L)$ be the spectral radius of the closed-loop operator $L=A + BK$, where K is given by~\eqref{e:def_K}. Moreover, let $\cJ$ be as in~\eqref{eq:obj}, and $\cJhat$ as in~\eqref{eq:nobj}. Let $\new\zeta$ be a real number such that $\rho(L)\leq \new\zeta < 1$. 
	Lastly, let $\Gamma = 1 + \max\{\norm A, \norm B, \norm P, \norm K\}$.  Under Assumptions~\ref{a:bounded_kernel},~\ref{a:stabilizability}, and~\ref{a:detectability}, if $\norm{A - \Atilde}\leq \epsilon$, $\norm{B - \Btilde} \leq \epsilon$, 
	\new{assuming there exists some function $g$ such that} 
	$\norm{P - \Ptilde}\leq g(\epsilon)$ \new{for some value of $\epsilon$ chosen small enough so that} $g(\epsilon)\leq \frac{1 - \new\zeta}{6\norm{B}\tau(L, \new\zeta)\Gamma^2}$, and $\sigma_{\textrm{min}}(R) \geq 1$, we have that
	\begin{equation*}
		\cJhat - \cJ\leq 36 \sigma_{\textrm{max}}(R)\Gamma^9g(\epsilon)^2\kappa^2\frac{\tau(L, \new\zeta)^2}{1 - \new\zeta^2}.
	\end{equation*}
\end{theorem}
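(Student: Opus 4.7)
The plan is to transport the certainty-equivalence analysis of~\cite{mania2019certainty} from the matrix case to our operator setting. The key quantity to control is the perturbed gain $\Delta K := \tilde K - K$, for which the already-established bound on $\tilde P - P$ in Lemma~\ref{lemma:convergence_p_operator} is the main input. My strategy is to rewrite both objective functionals as quadratic forms in the initial state and then exploit a Lyapunov-style identity for the cost mismatch that is quadratic in $\Delta K$; this is what makes the $g(\epsilon)^2$ dependence emerge naturally.

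Concretely, applying the Nyström-optimal gain $\tilde K$ to the true dynamics yields the closed-loop operator $\hat L := A + B\tilde K = L + B\Delta K$, and the induced cost equals $\cJhat = \langle z_0, \hat P z_0\rangle_{\cH_1}$, where $\hat P$ (distinct from $\tilde P$) solves the operator Lyapunov equation $\hat P = Q + \tilde K^* R \tilde K + \hat L^* \hat P \hat L$. Subtracting the Riccati identity $F(P,A,B)=0$ from~\eqref{e:def_P} and using the optimality characterisation $K = -(R + B^*PB)^{-1}B^*PA$, straightforward algebra yields the residual identity $\hat P - P = \Delta K^* (R + B^*PB)\Delta K + \hat L^*(\hat P - P)\hat L$, which iterates to
\begin{equation*}
\hat P - P = \sum_{t=0}^\infty (\hat L^*)^t \Delta K^*(R + B^*PB)\Delta K\, \hat L^t.
\end{equation*}
This operator version of the residual identity in~\cite{mania2019certainty} is the backbone of the argument.

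Each factor in the sum is then bounded separately. Differentiating the closed-form gain expressions and using $\|A-\tilde A\|, \|B-\tilde B\| \leq \epsilon$, $\|P-\tilde P\| \leq g(\epsilon)$, and $\sigma_{\textrm{min}}(R) \geq 1$, together with $\epsilon \lesssim g(\epsilon)$ implied by Lemma~\ref{lemma:convergence_p_operator}, produces a bound of the form $\|\Delta K\| \lesssim \Gamma^{a}\, g(\epsilon)$ for a small integer exponent $a$. A Neumann-type perturbation of $L$, combined with the smallness hypothesis $g(\epsilon) \leq (1-\rho)/(6\|B\|\tau(L,\rho)\Gamma^2)$, then enforces $\tau(L,\rho)\|B\Delta K\| \leq (1-\rho)/2$, whence $\|\hat L^t\| \leq 2\tau(L,\rho)\tilde \rho^t$ with $\tilde \rho := (1+\rho)/2 < 1$, yielding $\sum_t \|\hat L^t\|^2 \leq 4\tau(L,\rho)^2/(1-\rho^2)$. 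Finally $\|R + B^*PB\| \leq 2\sigma_{\textrm{max}}(R)\Gamma^{b}$ for a small $b$, and $\|z_0\|_{\cH_1} = \|\psi(x_0)\| \leq \kappa$ by Assumption~\ref{a:bounded_kernel}. Collecting the three estimates in the residual formula and invoking $\cJhat - \cJ \leq \kappa^2 \|\hat P - P\|$ collapses to the stated bound.

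The main obstacle is bookkeeping the polynomial dependence on $\Gamma$: every appearance of $A$, $B$, $P$, or $K$ contributes a factor, and the gain-perturbation step alone consumes several powers, so matching the claimed exponent $9$ demands meticulous tracking through the entire chain of inequalities. A secondary technical point is verifying that the Neumann-series perturbation of $L$ still produces a summable geometric series in the Hilbert-space setting; since $\tau(L,\rho)$ encapsulates the uniform-in-$t$ bound on $\|L^t\|\rho^{-t}$, the scalar argument carries over verbatim once operator-norm inequalities replace the singular-value ones, but one must check that the smallness threshold in the hypothesis matches exactly the one required by this stability-preservation step.
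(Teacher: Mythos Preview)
Your proposal is correct and follows essentially the same route as the paper. Your Lyapunov-series identity $\hat P - P = \sum_{t\ge 0} (\hat L^*)^t \Delta K^*(R+B^*PB)\Delta K\,\hat L^t$, evaluated on $z_0$, is precisely the operator version of the performance-difference lemma \cite[Lemma 10]{fazel2018global} that the paper invokes; the paper simply writes it as a sum over the trajectory $\hat z_i=\hat L^i z_0$ rather than as an operator equation, and then bounds $\|\Delta K\|\le 3\Gamma^3 g(\epsilon)$, $\|R+B^*PB\|\le \sigma_{\max}(R)\Gamma^3$, and $\sum_i\|\hat L^i\|^2\le 4\tau(L,\rho)^2/(1-\rho^2)$ exactly as you outline.
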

\begin{proof}
	The proof of this result can be obtained by following the same steps of~\cite[Theorem 1]{mania2019certainty}. Although deployed for $\A$ and $\B$ being matrices, these steps do not depend on the dimensionality of the state space, i.e., can be used also when considering function-valued dynamics, as we do in our work.
	\new{
	We begin by computing an upper bound on the error for the Riccati gain $\norm{K-\Ktilde}$. We then show that when this error is small enough, $\Ktilde$ stabilizes the system with exact kernel defined in~\eqref{eq:dynamics_compound}. Lastly, we use~\cite[Lemma 10]{fazel2018global} to upper bound the error on the objective functions.
	}
\end{proof}
\new{\begin{remark}
	Considering the constants appearing in the bounds of this section, they can be interpreted as follows: $\kappa$ is the upper bound on the kernel function's values, i.e., the kernel variance; $\gamma$, which is the regularization weight, could be \nnew{heuristically tuned based on the variance of the noise affecting the system in \eqref{eq:controlled_dynsys}}. $\delta$ should in principle be chosen to be as small as possible, as our bound in Theorem 5 holds with probability at least $1 - \delta$. The other constants (e.g., the norms of the learned operators) depend on the data used for training.
\end{remark}}
\section{Simulation results}
\label{s:experiments}
\begin{figure*}[t]
	\centering
	\begin{subfigure}{.32\linewidth}
		\centering
		\includegraphics[width=\linewidth]{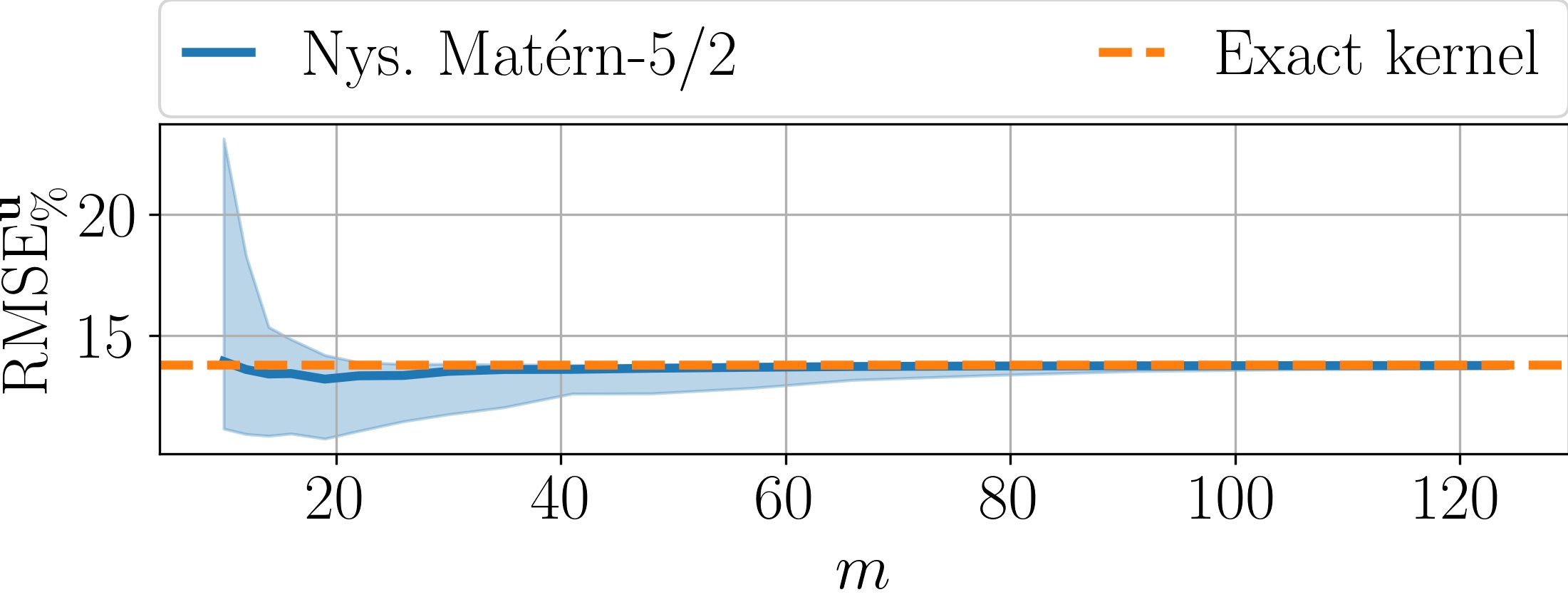}
		\caption{Error on the control input.}
		\label{subfig:opt_ctr_convergence_hjb}
	\end{subfigure}
\begin{subfigure}{.32\linewidth}
	\centering
	\includegraphics[width=\linewidth]{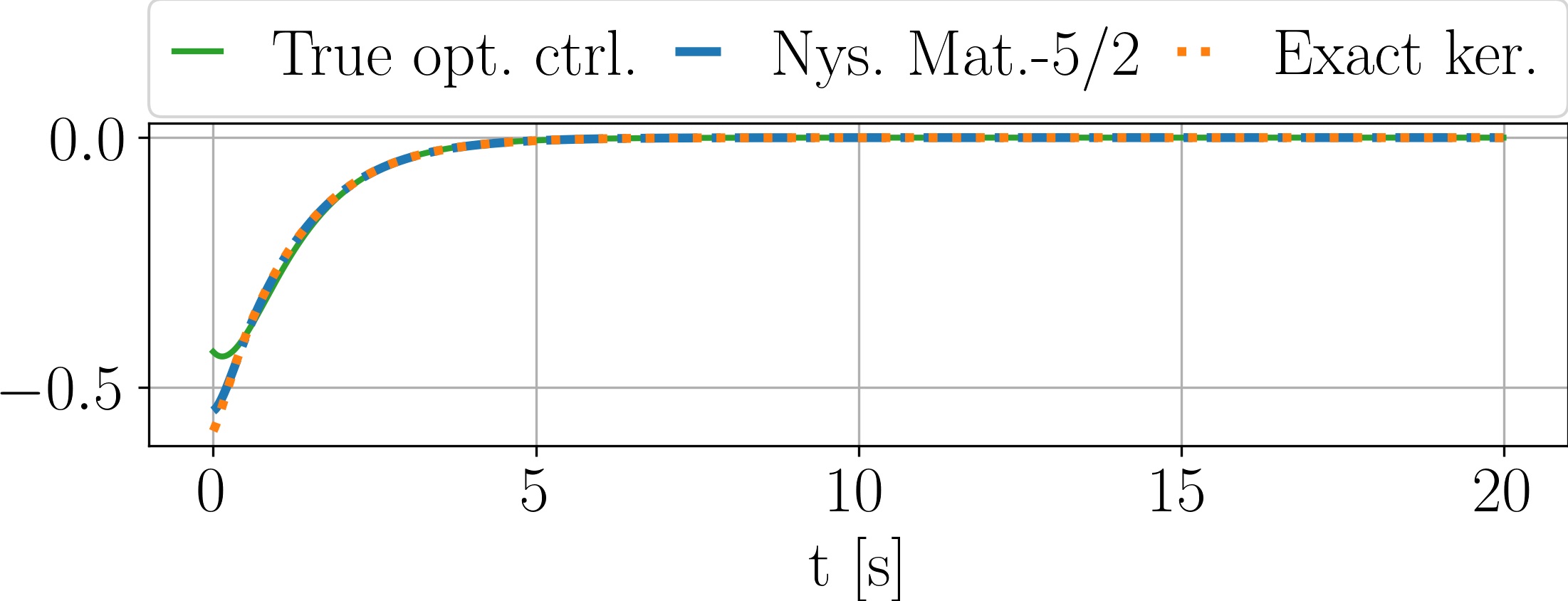}
	\caption{Example control law for $x_0=0.9$.}
	\label{subfig:ctrl_qual_vis}
\end{subfigure}
	\begin{subfigure}{.32\linewidth}
		\centering
		\includegraphics[width=\linewidth]{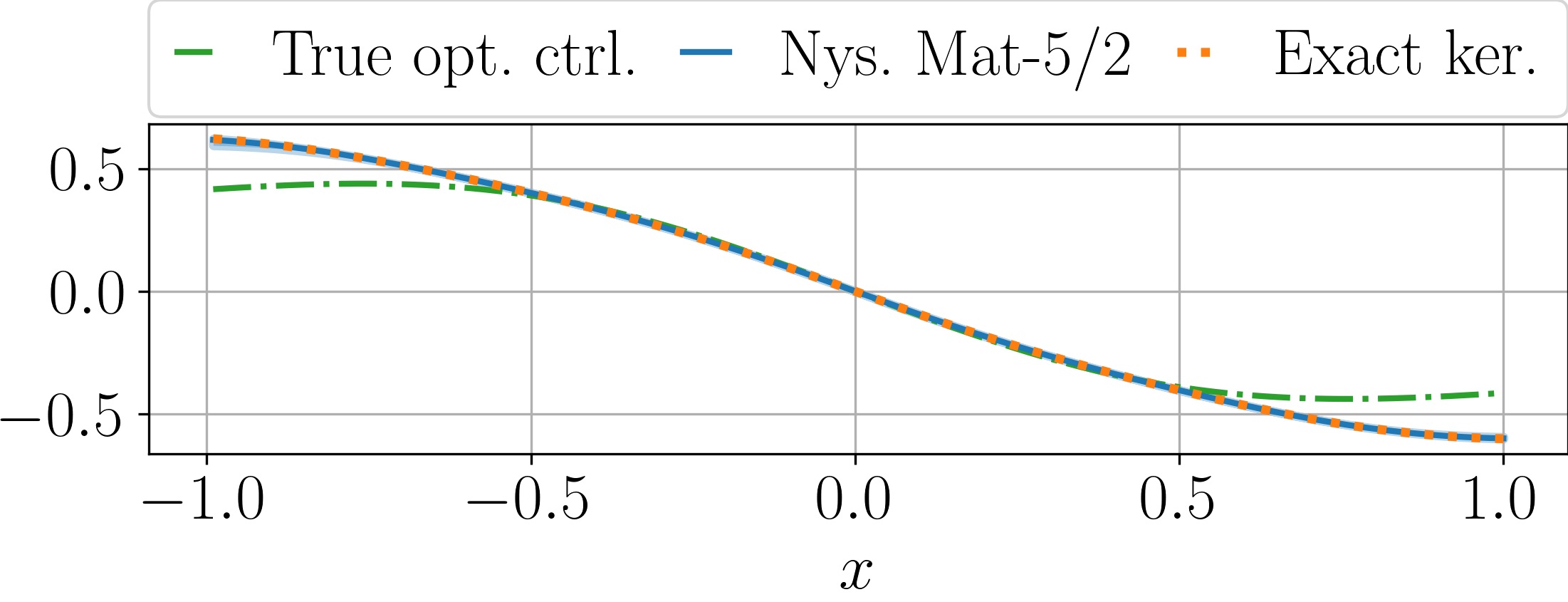}
		\caption{\nnew{Control law over domain.}}
		\label{subfig:opt_ctr_domain_hjb}
	\end{subfigure}
	\caption{\new{(a): Evaluation of the error between the control law defined in \eqref{e:ctrl_policy} and the true optimal control for the system in \eqref{e:hjb_sys}. Median, $15^{th}$ and $85^{th}$ percentile computed across 200 seeds. (b): A qualitative visualization of the optimal control retrieved, for $m=100$. \nnew{(c): A comparison between the true optimal control, the one defined in \eqref{e:ctrl_policy}, and its version obtained with an exact kernel, on the state space of the system \eqref{e:hjb_sys}, for $m=100$. For the Nyström approach, we show median, $15^{th}$ and $85^{th}$ percentile computed across 200 seeds.}}}
\end{figure*}
\begin{figure*}[t]
	\centering
	\begin{subfigure}{.33\linewidth}
		\centering
		\includegraphics[width=\linewidth]{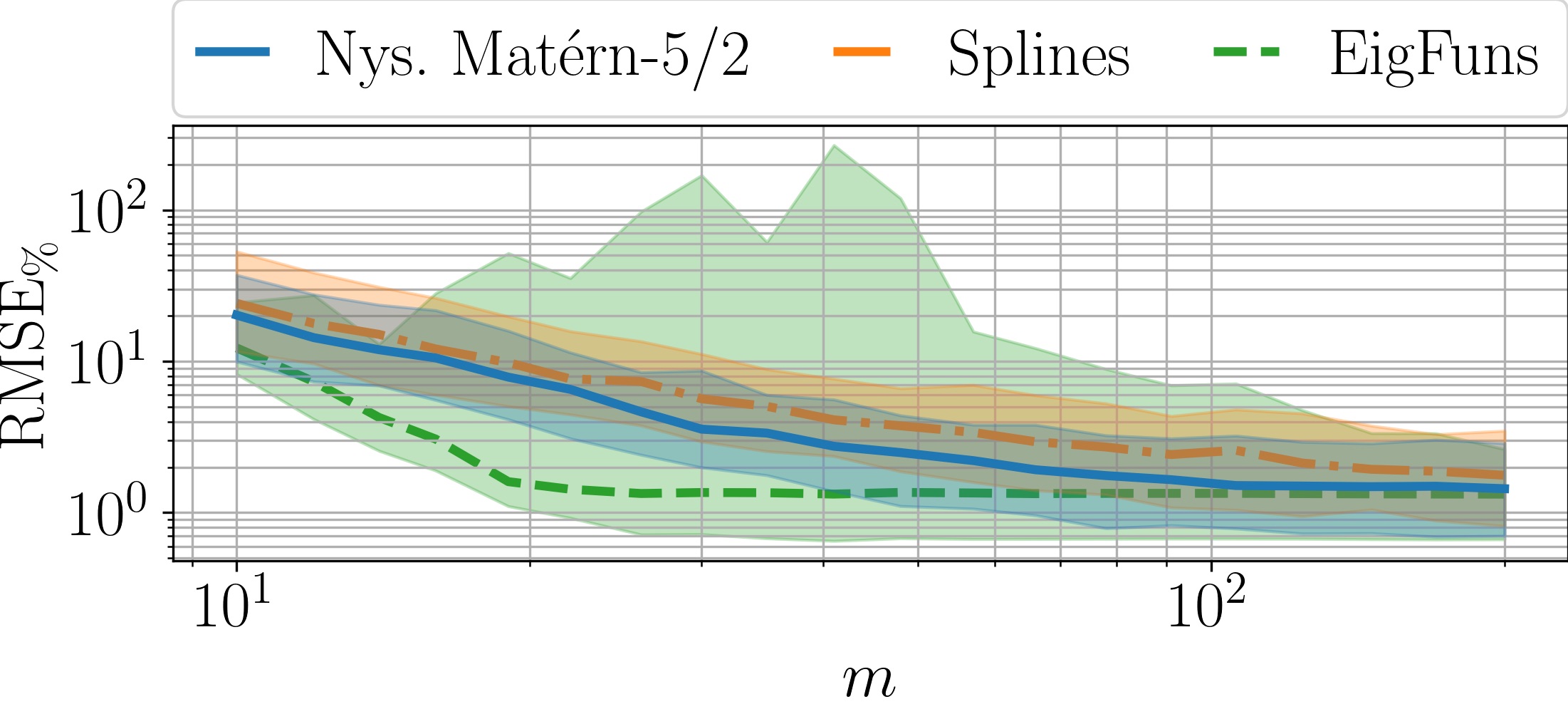}
		\caption{Open-loop forecasts.}
		\label{subfig:open_loop_duffing}
	\end{subfigure}\hfill
	\begin{subfigure}{.33\linewidth}
		\centering
\includegraphics[width=\linewidth]{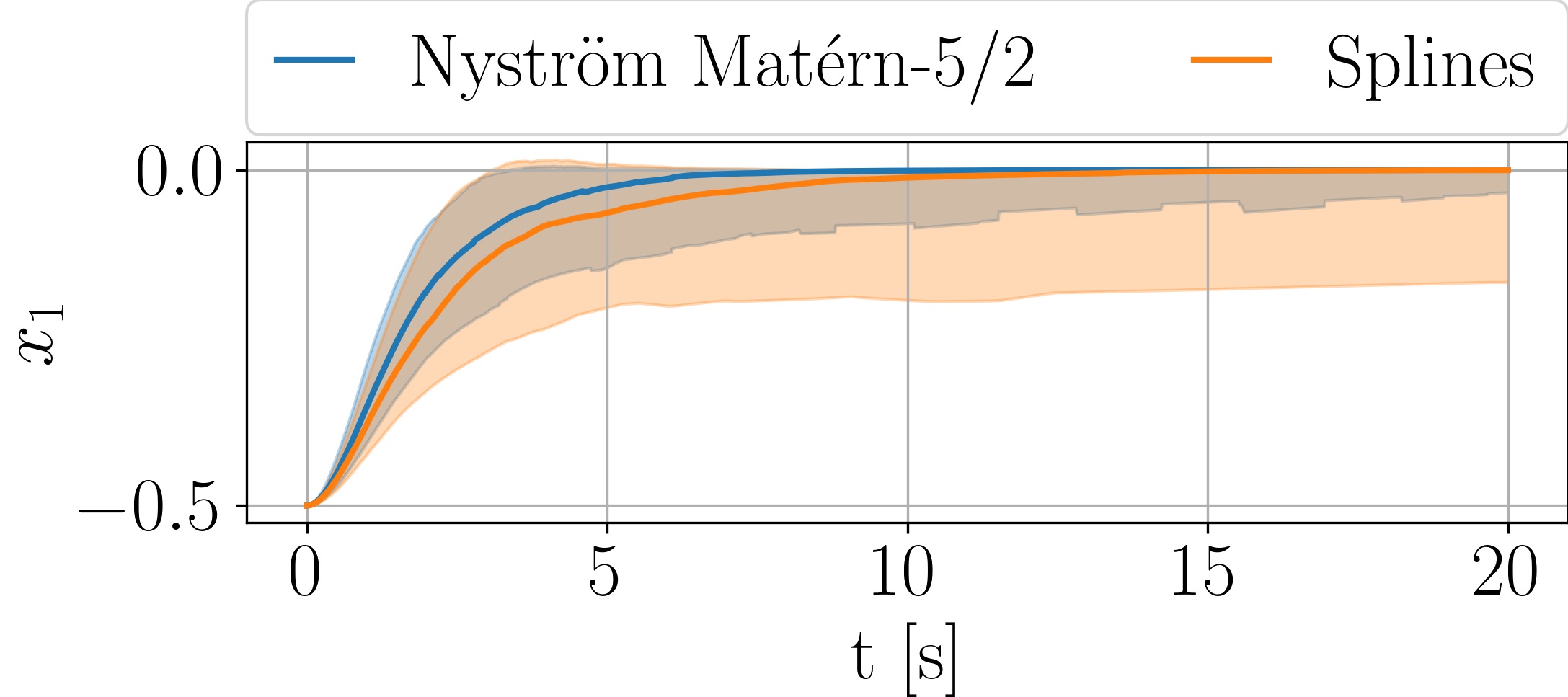}
\caption{Stabilization of $x_1$.}
\label{subfig:first_state_stabilization}
	\end{subfigure}
\begin{subfigure}{.33\linewidth}
	\centering
	\includegraphics[width=\linewidth]{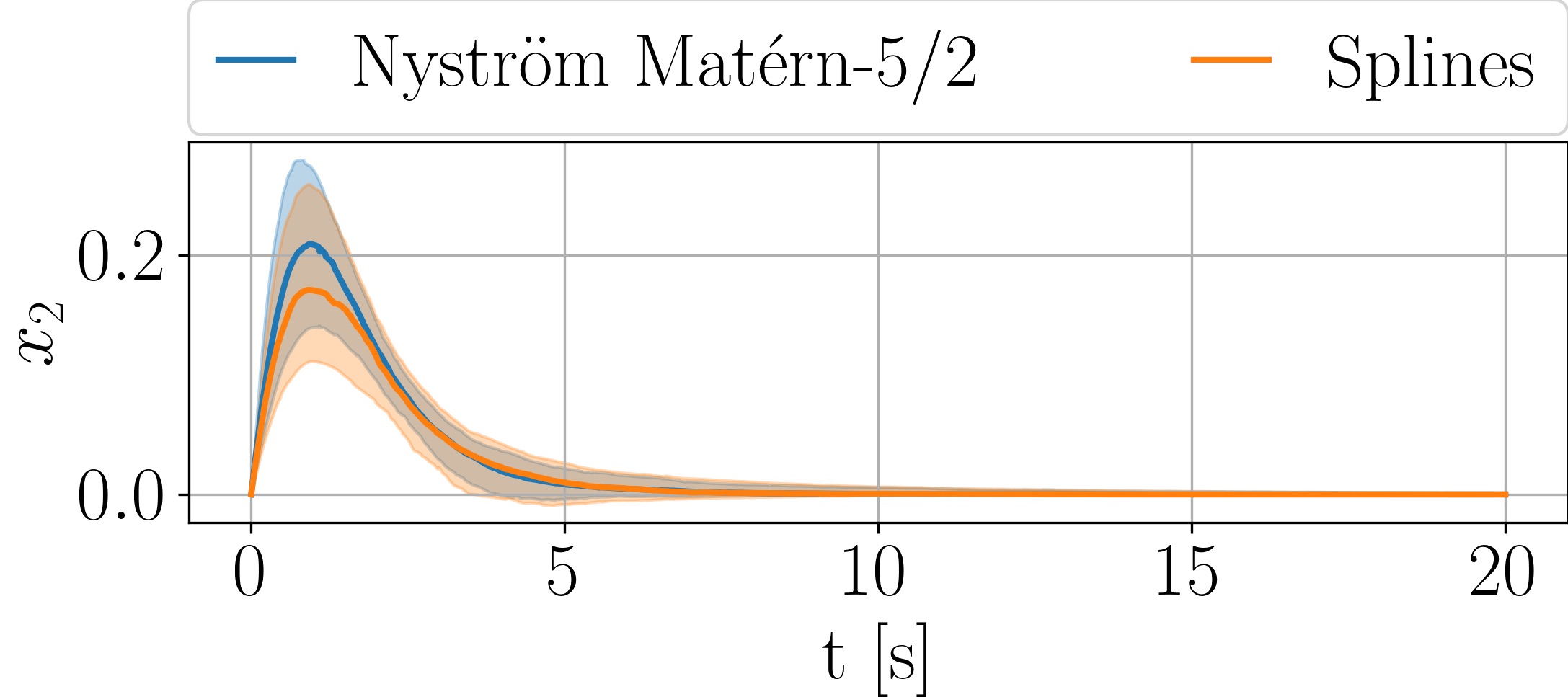}
\caption{Stabilization of $x_2$.}
\label{subfig:second_state_stabilization}
\end{subfigure}
	\caption{(a): $\mathrm{RMSE}_{\%}$ between the true trajectory and the one forecasted in open-loop for the Duffing oscillator, with three different feature representations (splines, eigenfunction approximation by~\cite{korda2020optimal}, and Nyström approximation of the Matérn-5/2 kernel), as a function of the dimensionality of the feature vector, $m$. Median, $15^{th}$ and $85^{th}$ percentile computed across 200 test trajectories with random initial conditions from the \new{unit ball}. 
	(b)-(c): The LQR control strategy to stabilize towards the origin, with $m=20$, starting from the initial conditions $[-0.5, 0.0]^T$. Most importantly, when the splines are used, in 2 cases the LQR gain yields unstable nonlinear dynamics (not included in the percentile range). Median, $15^{th}$ and $85^{th}$ percentile computed across 200 seeds.}
	\label{fig:duffing_experiments}
\end{figure*}
\begin{figure}[t]
	\centering
	\includegraphics[width=.8\linewidth]{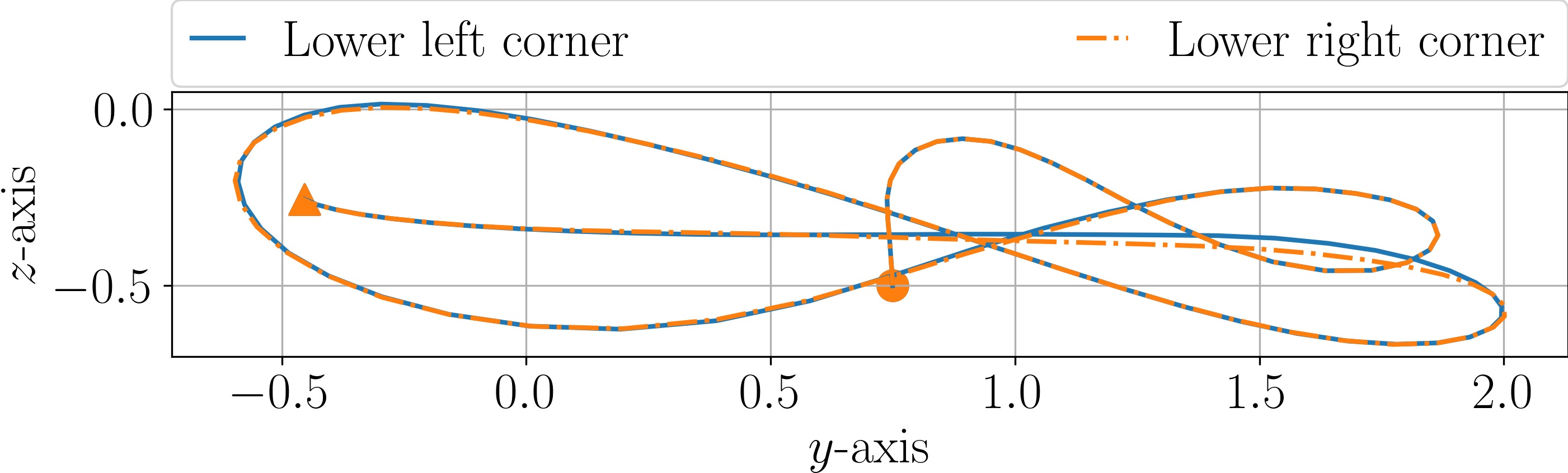}
		\caption{\new{An example trajectory of the left and right lower corners of the cloth in the $y$-$z$ plane. The circle denotes the starting position, while the triangle the final one.}}
	\label{fig:cloth_trajectory_sample}
\end{figure}
\begin{figure*}[t]
	\centering
	\begin{subfigure}{.33\linewidth}
		\centering
		\includegraphics[width=\linewidth]{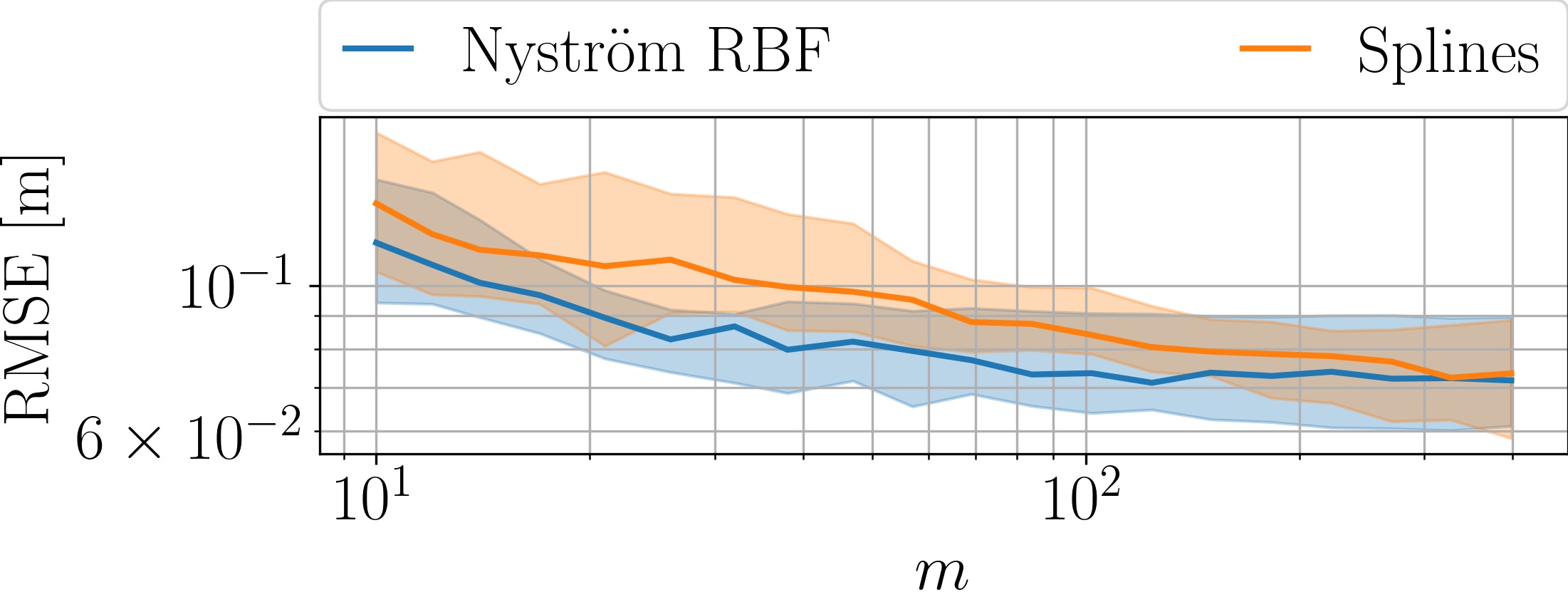}
		\caption{Open-loop forecast.}
		\label{subfig:cloth_open_loop_forecast}
	\end{subfigure}\hfill
\begin{subfigure}{.33\linewidth}
	\centering
	\includegraphics[width=\linewidth]{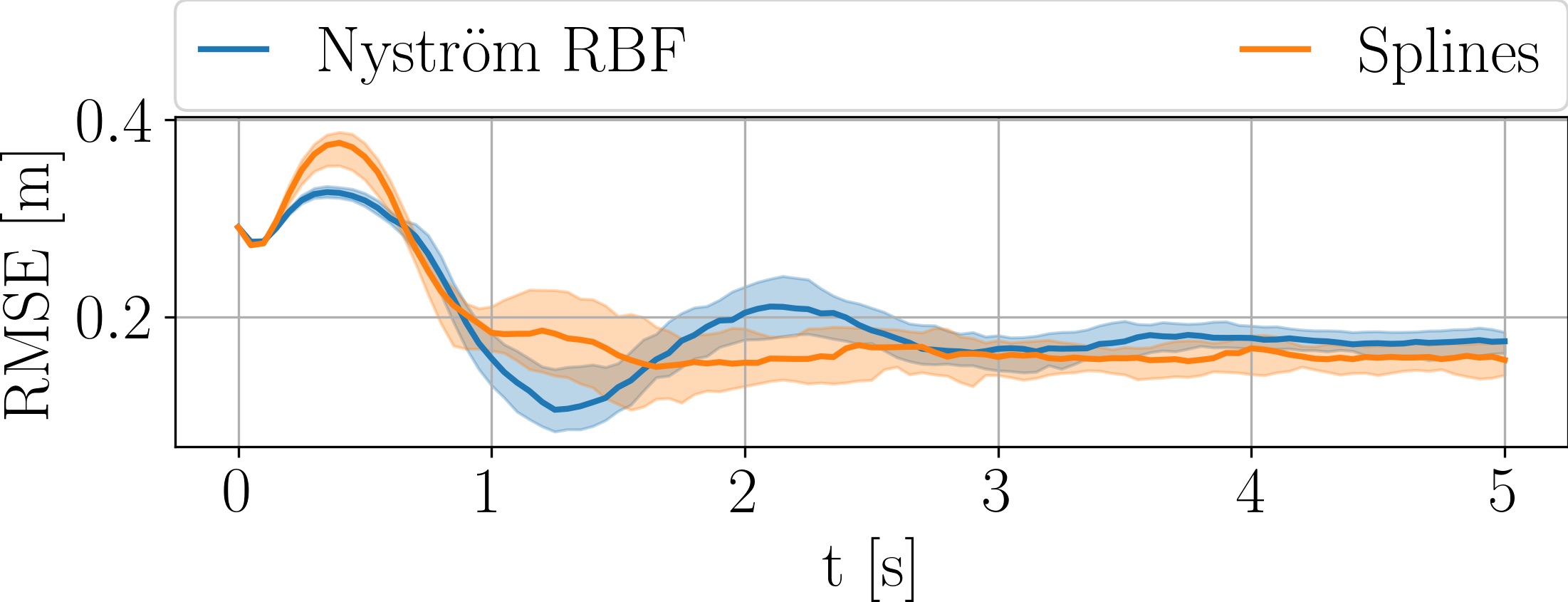}
	\caption{Regulation error.}
	\label{subfig:cloth_regulation}
\end{subfigure}
\begin{subfigure}{.33\linewidth}
	\centering
	\includegraphics[width=\linewidth]{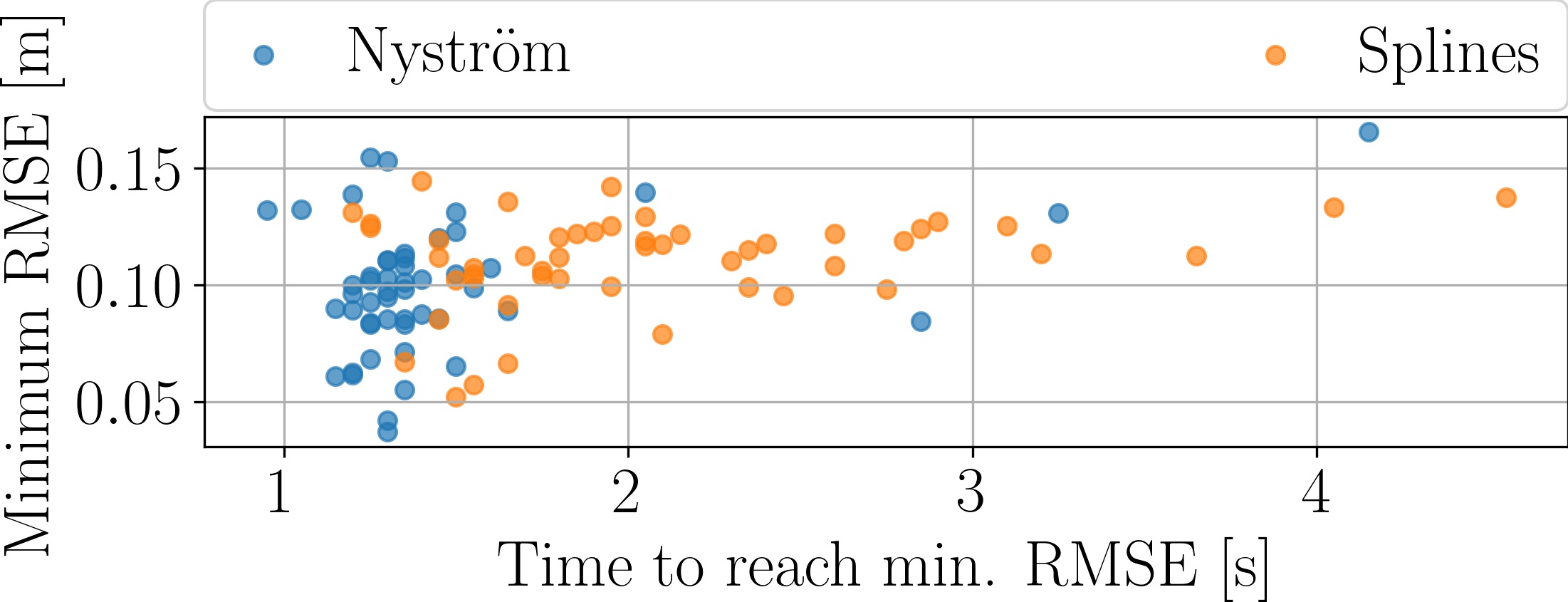}
	\caption{Time-to-min.\ RMSE.}
	\label{subfig:cloth_scatter}
\end{subfigure}
	\caption{(a): the RMSE computed between the true cloth trajectory and the one forecasted in open loop, with two different feature representations (splines vs.\ Nyström approximation of the RBF kernel). Median, $15^{th}$ and $85^{th}$ percentile computed across 10 testing trajectories, sampled with 20 different seeds.
	(b): regulation error between the target pose of the cloth and the actual one. Median, $15^{th}$ and $85^{th}$ percentile computed across 50 seeds. \new{(c): Scatter plot showing the time time needed by each simulation of the cloth experiment to reach the minimum distance from the target.}}
	\label{fig:cloth_simulations}
\end{figure*}
\begin{figure*}
	\centering
	\begin{subfigure}{.33\linewidth}
		\centering
		\includegraphics[width=\linewidth]{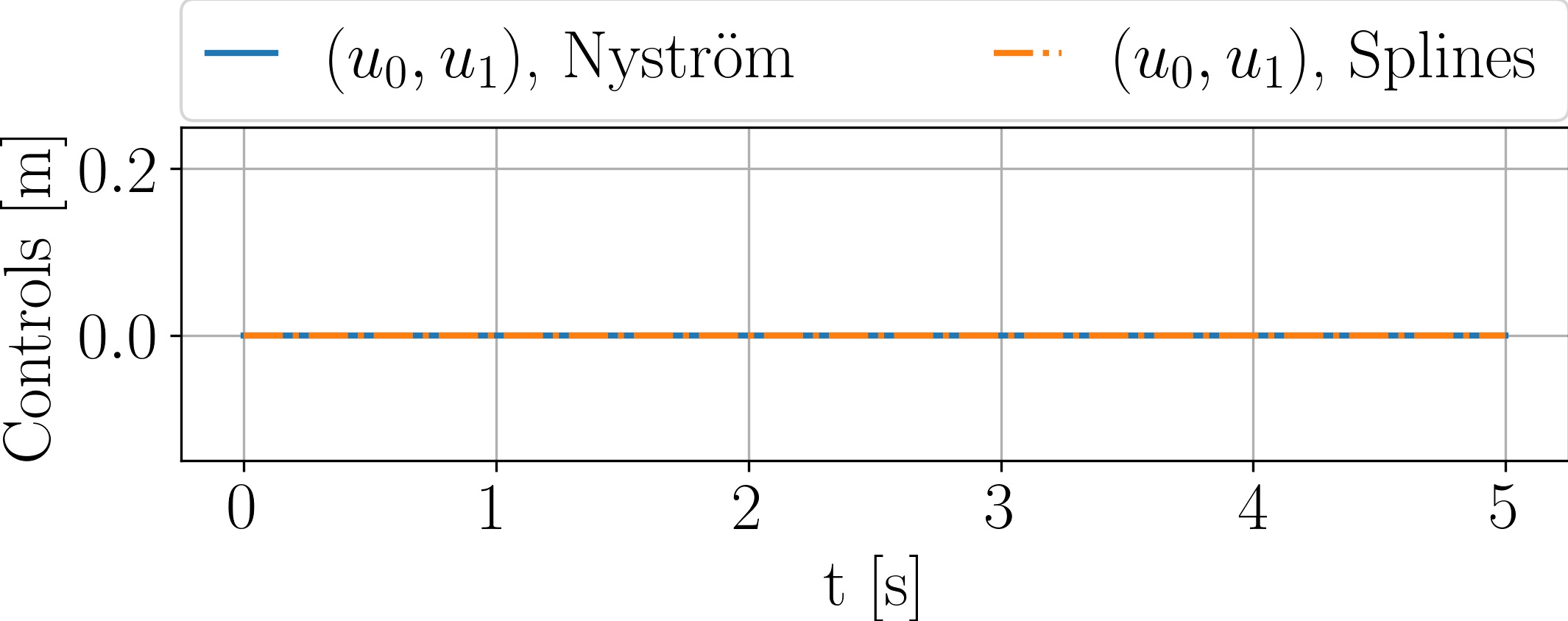}
		\caption{Displacement along $x$-axis.}
	\end{subfigure}\hfill
	\begin{subfigure}{.33\linewidth}
		\centering
		\includegraphics[width=\linewidth]{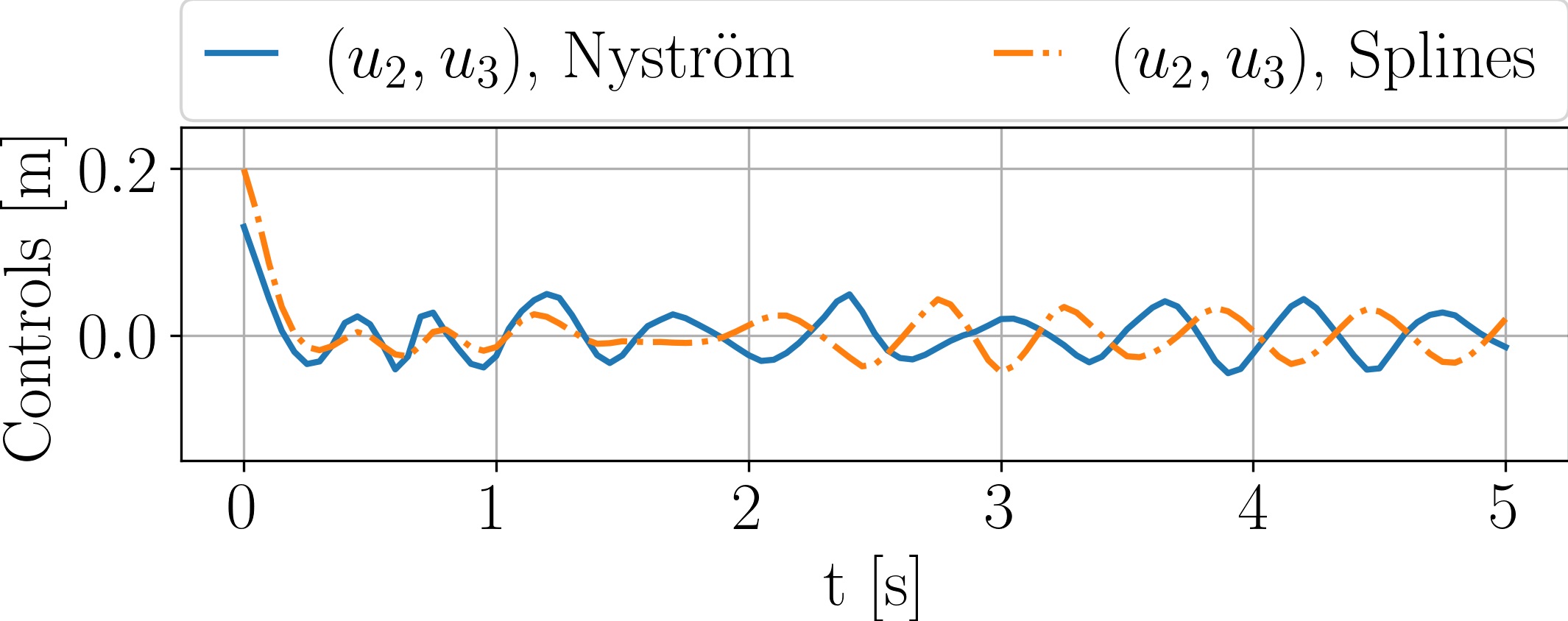}
		\caption{Displacement along $y$-axis.}
	\end{subfigure}
	\begin{subfigure}{.33\linewidth}
		\centering
		\includegraphics[width=\linewidth]{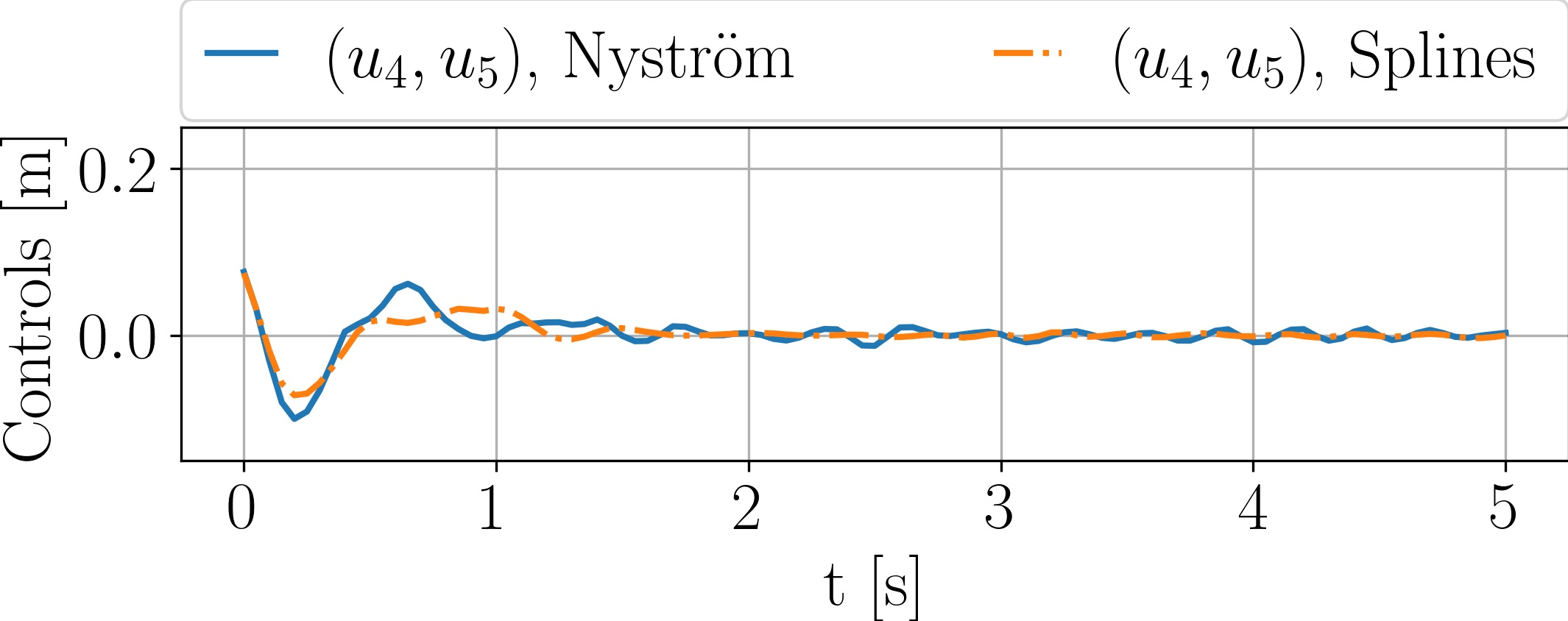}
		\caption{Displacement along $z$-axis.}
	\end{subfigure}
	\caption{\nnew{An illustrative example of the components of the control signal, for the cloth manipulation simulation. After approaching the target, the system enters a stable limit cycle. The elements in the pair $(u_i, u_{i+1})$ are identical. This is because, for each coordinate, the training controls (which are the variations in position of the upper cloth's corners) are the same for the left and the right corner, as exemplified in Fig.\ \ref{fig:cloth_trajectory_sample}. This bias is reflected in the trajectory obtained using the LQR gain.}}
	\label{fig:oscillating_control_cloth}
\end{figure*}
In this section, we propose a numerical validation of the Nyström-based system identification and the subsequent LQR algorithm discussed in the previous sections. \new{We will start in Section \ref{sec:poc_opt_ctrl} with a proof-of-concept evaluation of the control law retrieved by our Nyström-based approach, on the nonlinear dynamics with known optimal control discussed in \cite{guo2022tutorial}. Then}, we will consider \new{the Duffing oscillator} in Section~\ref{s:duffing}, and we will study the application of cloth manipulation in Section~\ref{s:cloth}.

In all our case-studies, the kernel lengthscale and the regularization parameters are determined with a grid search combined with 5-fold cross validation. For simplicity, we use the same regularization parameter for the estimate of the dynamics and the state reconstruction, i.e.~$λ=γ$. 
\new{\subsection{Proof-of-concept dynamics}
	\label{sec:poc_opt_ctrl}
%
In this section, we assess how the control law defined by \eqref{e:ctrl_policy}, denoted as $u$, compares to the known optimal control $u^{\textrm{true, opt}}$ for an illustrative single-input dynamical system. This simulation serves as a proof-of-concept to show that the proposed control law, retrieved by an LQR-based methodology, is not too far from the true optimal control. The continuous dynamics defined in the following are discretized with the Runge-Kutta method.

We consider the nonlinear dynamics studied by Guo et al.\ \cite{guo2022tutorial}:
\begin{equation*}
	\dot x = -x^3 + u.\numberthis\label{e:hjb_sys}
\end{equation*}
The known optimal control, for $Q=1$ and $R=1$, stabilizing the origin is $u^{\textrm{true, opt}}=x^3 - x\sqrt{1 + x^4}$. In the simulation, we started at $x_0=0.9$. The Nyström-based system was defined using the Matérn-5/2 kernel with lengthscale equal to 1 (\ie $k(x,y)\de (1+\sqrt{5}r +5r²/3)\exp(-\sqrt{5}r)$ where $r=‖x-y‖$), and regularization constant equal to $10^{-6}$. It was trained with 20 trajectories of length 2 s, with a sampling time of 0.01 s. The initial states for the training trajectories, and $u_t$ were uniformly sampled from $[-1, 1]$. The Nyström-based LQR is solved with $Q=C^TC$ and $R=1$. Given the size of the training dataset, we further compare both the Nyström-based control law and the true optimal control with the exact kernel dynamics from \eqref{eq:dynamics_compound}. 
Note that the latter induces states belonging to $\ran(\Zx)$, hence when starting from an initial state in $\ran(\Zx)$ 
it can be seen as a special case of the Nyström dynamics \eqref{eq:nystrom_dynamics_compound} by taking $\Pin=I$, and choosing $m=n$ and $\ldmo{1}=\xvec_2,\ldmo{m}=\xvec_{n+1}$ in the definition of $\Stildeout$.
%
By doing so, we get
$
	\zvec_{t+1} = (Z_{\xvec}Z_{\xvec}^*)^{1/2}  (SS^*+γI)^{-1}S\vvec{ Z_\xvec^*[(Z_\xvec Z_\xvec^*)^{\dagger}]^{-1/2} \zvec_t, \\\uvec_t },
$
and $\zvec_0 \de \Bout^*\psi(\xvec_0)$.
\begin{table}[t]
	\caption{\new{Infinite-horizon LQR costs for data-driven and optimal control laws, on proof-of-concept dynamics \eqref{e:hjb_sys} (for the Nyström approximation we show \textbf{median}, $15^{th}$ and $85^{th}$ percentiles across 200 seeds).}}
	\label{table:cum_costs}
	\centering
	\renewcommand{\arraystretch}{1.0}
	\setlength{\tabcolsep}{2.7pt}
	\begin{tabular}{lll}
		\toprule
		\textbf{Cost of \eqref{e:ctrl_policy}, Nystr.}
		\multirow{3}{*} &$m=10$ & $\mathbf{57.12},\ 57.00,\ 58.19$ \\
		&$m=50$ &$\mathbf{57.10},\ 57.05,\ 57.10$\\
		&$m=100$ &$ \mathbf{57.10},\ 57.09,\ 57.10$ \\
		\midrule
		\textbf{Cost of \eqref{e:ctrl_policy}, exact ker.} & & $57.10$\\
		\midrule
		\textbf{Cost of opt.\ ctrl.} && $56.74$\\
		
		%
		%
		\bottomrule
	\end{tabular}
\end{table}
Besides computing the infinite-horizon LQR cost, we assess the distance between the two control laws with this key performance indicator:
$
	\mathrm{RMSE}_{\%}^{\uvec} = \sqrt{{\sum_{t=1}^{200} (u_{t} - u^{\textrm{true, opt}}_{t})^2}/{\sum_{t=1}^{200} (u^{\textrm{true, opt}}_{t})^2}}\cdot 100.
$
As shown in Fig.~\ref{subfig:opt_ctr_convergence_hjb}, the retrieved control law is close to the optimal one, achieving a final median $\mathrm {RMSE}_{\%}^{\uvec}$ of approximately 13\%, both for the exact and for the Nyström kernel. A qualitative comparison of the control laws involved in this simulation is offered in Fig.\ \ref{subfig:ctrl_qual_vis}. \nnew{Fig.\ \ref{subfig:opt_ctr_domain_hjb} shows the optimal and data-driven control laws as a function of the state $x\in[-1, 1]$ of system \eqref{e:hjb_sys}.} 
Even though the data-driven control laws have an error w.r.t.\ the optimal one near the boundaries of the training domain, Fig.\ \ref{subfig:ctrl_qual_vis} shows that convergence to the true optimal control law happens fast even when starting in those regions (e.g., for $x=0.9$). 
Lastly, Table \ref{table:cum_costs} compares the infinite-horizon LQR cost achieved by the control law in \eqref{e:ctrl_policy}, and by the optimal control for the task considered.
}
\subsection{\new{Duffing oscillator}}
\label{s:duffing}
We \new{now consider} a classic benchmark for nonlinear control, namely the \emph{damped Duffing oscillator}~\cite{korda2020optimal}. The nonlinear dynamics are defined in continuous time domain and given by the following differential equations:
\begin{align}
	\dot x_1 &= x_2,\qquad	\dot x_2 = -0.5x_2 - x_1(4x_1^2 - 1)+ 0.5 u\label{eq:duffing_eqs}.
\end{align}
Similarly to~\cite{korda2020optimal}, the dynamics are discretized with the Runge-Kutta method to obtain a difference equation of the form of~\eqref{eq:controlled_dynsys}, with a discretization step of 0.01 s. For $u=0$, this dynamical system is known to have an unstable equilibrium point at the origin, and two stable equilibria at $[-0.5, 0]^T$ and $[0.5, 0]^T$. The training set consists of 100 trajectories of length 5 s and without forcing, and 100 trajectories of length 2 s with forcing, all staring from inside the unit ball around the origin. For the forced data, the system is excited with inputs sampled uniformly at each time step from the interval $[-1, 1]$.

Firstly, we are interested in assessing how well the Nyström-based dynamics in~\eqref{eq:nystrom_dynamics_vector_partial} approximate the ones of the true dynamical system in~\eqref{eq:duffing_eqs}. To do so, we test the open-loop forecast of the dynamics subject to a square wave with unitary amplitude and frequency 3.33 Hz, starting from a random initial condition in the unit ball. This means that we excite the data-driven model of~\eqref{eq:nystrom_dynamics_compound} with the square wave, reconstruct the state via matrix $C$ from~\eqref{eq:reconstruction}, and compare 
the states reconstructed using $C$
 against the states visited by the true nonlinear dynamics. The length of the open-loop forecast is set to 2 s. The results can be observed in Fig.~\ref{subfig:open_loop_duffing}. 
 The Nyström identification is compared against the splines introduced in~\cite{korda2018linear}, with centers sampled uniformly at random from the unit circle, and against the eigenfunction-based method by~\cite{korda2020optimal}. 
  \new{For the latter approach, we opted for the implementation without optimization of the eigenvalues, which is more robust for large values of $m$, and
 	practically equivalent to the one with optimization, based on the empirical evaluation carried out in \cite{korda2020optimal}.}
The Nyström identification uses a Matérn-5/2 kernel, which seems empirically to perform better than other Matérn or RBF kernels in this setting. The lengthscale and variance are set to 1 by cross-validation%
. 
We set $\gamma=10^{-6}$. Both the splines and the Nyström approach use the same samples for approximating the input and output spaces. The Nyström landmarks are sampled uniformly at random from the output training set. We consider the normalized root-mean-squared error (RMSE) between the actual states $x$ and the forecasted ones $\tilde x$, defined as
$
	  \mathrm{RMSE}_{\%} = \sqrt{[\sum_{i=1}^2\sum_{k=1}^{200} (\tilde x_{i, k} - x_{i,k})^2]/[\sum_{i=1}^2\sum_{k=1}^{200} x_{i, k}^2]}\cdot 100,
$
where $x_{i,k}$ denotes the $i$-th dimension of the state at time step $k$. 
We can observe that all the representations converge in median to the same minimum error, but the Nyström method yields a smaller error when a few landmarks are used. Moreover, we can observe that the approach in~\cite{korda2020optimal} suffers from a much larger variance, probably due to the short horizon of the unforced training trajectories employed (w.t.r.\ to 8 s used in~\cite{korda2020optimal}).

The goal of the control strategy is to stabilize the system towards the origin $[0, 0]^T$, starting from the initial condition $[-0.5, 0.0]^T$, for $m=20$. As discussed in Section~\ref{s:kernel_lqr}, the optimal LQR gain is retrieved by solving~\eqref{eq:lqr_koopman_nystrom_finite_dimensional}, and the optimal state-feedback law $u_k = \Ktilde \Pout\psi(\xvec_k)$ is plugged in~\eqref{eq:duffing_eqs}. 
The $R$ weighting matrix is set to the identity, while $\tilde Q= C^TC$, where $C$ is obtained by solving~\eqref{eq:reconstruction} both with the Nyström and the splines features. 
The values of the states $x_1$ and $x_2$ as a function of time, for a state-feedback control, are shown in Figures~\ref{subfig:first_state_stabilization} and~\ref{subfig:second_state_stabilization}. 
We can observe that the Nyström representation yields a better behavior of the closed-loop system, especially when considering the stabilization of the first state of the oscillator. Note that the method reported in~\cite{korda2020optimal} builds a complex-valued data-driven system, which would yield to a complex-valued Riccati gain and control input. 
\nnew{While it is certainly possible to project the complex-valued LQR-based control law retrieved in this way to be a real-valued signal, we observed in preliminary experiments that such a control law yields a suboptimal behavior, such as steady-state errors. Note that this issue is circumvented by~\cite{korda2020optimal}, who directly constrain the control to be a real-valued signal, and solve such an optimal control problem online with a model predictive control loop.}
\subsection{Cloth manipulation}
\label{s:cloth}
Cloth manipulation is arguably one of the most active and challenging fields in robotic manipulation. Recent work has tried to address this problem in a model-based fashion, proposing approximate, data-driven methods that can identify the dynamics of cloth, and then combine such approximate models with real-time predictive controllers~\cite{amadio2023controlled,zheng2022mixtures,luque2022model}. Similarly to
\cite{amadio2023controlled}, in this work we consider a squared piece of cloth, modeled based on the simulator deployed by~\cite{coltraro2022inextensible}. The training dataset is created by moving the upper corners of the cloth in a butterfly-like shape in the $y$-$z$ plane \new{(see Fig.\ \ref{fig:cloth_trajectory_sample})}, with different angles w.r.t.\ the $x$-$y$ plane. The angle is sampled from the interval $[-60\degree, 60\degree]$\footnote{\nnew{We refer the interested reader to Figs.\ 4 and 5 by Amadio et al.\ \cite{amadio2023controlled} for a clear visualization of the task considered here.}}. The goal is to identify the cloth dynamics in this setup. The cloth is modeled as a square $8\times 8$ mesh. The state is represented by the 3-dimensional position of the mesh points, and the control input is given by the variation in position of the upper corners. The control input is in $\R^6$, while the state is in $\R^{192}$. \new{The sampling time is equal to 0.05 s.}

 We first assess the performance of the proposed approximate kernel-based identification on this dataset. In particular, the training set consists of 30 trajectories randomly sampled from a total of 40. The length of the trajectories is 5 s, and the sampling time is 0.05 s. We test the open-loop system forecasts in the remaining ten testing scenarios. We compare the Nyström approximation of the RBF kernel against the thin plate splines employed by~\cite{korda2018linear}. 
 \nnew{Note that, in~\cite{korda2018linear} and in Section~\ref{s:duffing}, the feature vector contains elements of the form $\lVert {\mathbf x - \mathbf x_0}\rVert^2\log{\lVert{\mathbf x-\mathbf x_0}\rVert}$, for $\mathbf x_0$ being a center sampled uniformly from a box in $\mathbb R^d$. However, given the extremely large dimensionality of the cloth's state-space representation, the centers sampled in this way might be quite far from the real cloth configurations visited during the trajectories. We observed, in preliminary experiments, that this unbounded distance between centers and training points yields a highly numerically unstable behavior of splines for $m\geq 200$. For this reason, we propose to sample the splines' centers uniformly at random from the training data, making splines competitive for large values of $m$ as well.}
 Similarly, the Nyström landmarks are sampled uniformly at random from the output training set. 
The RBF kernel lengthscale is set to $10$ and $\gamma$ to $10^{-7}$ by cross-validation. A comparison is offered in Fig.~\ref{subfig:cloth_open_loop_forecast}, showing the \emph{unnormalized} $\mathrm{RMSE}$ on the testing trajectories. While both feature representations converge to the same RMSE, the RBF representation converges faster and leads to a smaller error in the low-feature regime. Also, note that, given the high dimensionality of the state space, in this case the feature transformation can be seen as a dimensionality reduction technique.
\begin{figure}
	\centering
	\includegraphics[width=.8\linewidth]{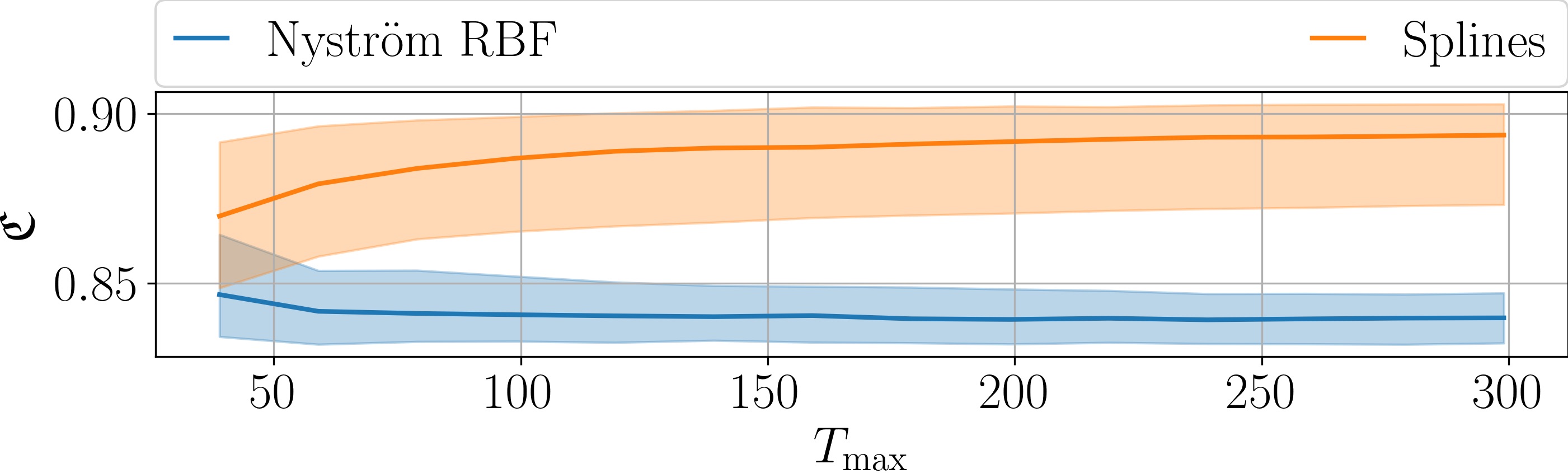}
	\caption{\nnew{Cost \eqref{e:avgd_running_cost} as a function of the horizon $T_{\mathrm{max}}$.}}
	\label{fig:averaged_running_cost}
\end{figure}

 In order to use an LQR approach, we define the control goal as moving the cloth to a target pose. Such a pose is given by the initial pose, rotated by $45\degree$ about the upper corners. 
This simulation captures how well the Koopman models the cloth's deformations, which can be used in fast, dynamic motions. In this case, the number of Nyström landmarks and spline centers is set to 100. \new{Moreover, $\tilde Q = 0.0075 C^TC$, and $R$ is the identity matrix of suitable dimensions.}
As we can observe in Fig.~\ref{subfig:cloth_regulation}, the Nyström method improves the regulation error, getting closer to the target pose. \new{The better performance of the Nyström features is confirmed by Fig.\ \ref{subfig:cloth_scatter}. There, we can observe that the cloth gets closer to the target in a shorter amount of time.}
Note that due to gravity and to the deformable nature of cloth, the cloth will not stay in the target pose once reached, which is why the RMSE increases again for both methods after reaching the target. \new{After the target is reached, we observe that the system enters a stable limit cycle with an RMSE at around 19 cm for the Nyström approach, and 18 cm for the splines. In this cycle, the cloth oscillates around a vertical position that is closer to the target one than the initial pose, for which the RMSE is approximately 30 cm. The limit cycle can be visualized, for instance, in Fig.\ \ref{fig:oscillating_control_cloth}.}

\nnew{As a final assessment, we can consider the following averaged running cost computed on the true nonlinear dynamics:
\begin{equation}
	\mathfrak C =\frac{1}{T_{\textrm{max}}}\sum_{t=0}^{T_{\textrm{max}}}0.0075  (\mathbf x_t - \mathbf r)^T\mathbf (\mathbf x_t - \mathbf r) + \mathbf u_t^T\mathbf u_t ,\label{e:avgd_running_cost}
\end{equation}
where $\mathbf r$ is the target state, $T_{\textrm{max}} = \mathrm{T\ s} / \mathrm{0.05\ s}$, $\mathrm T$ is a variable time horizon, $\mathrm{0.05\ s}$ is the sampling time, and $\mathbf u_t$ is the control from \eqref{e:ctrl_policy}, computed with splines or the Nyström features. This metric is related to the infinite-horizon LQR cost used to retrieve the state-feedback gain, since $\tilde Q=0.0075C^TC$, and $\mathbf x_t \approx C\tilde{\mathbf z}_t$ ($\tilde{\mathbf{z}}$ indicating either the Nyström or the splines features). Considering this metric, we observe in Fig.\ \ref{fig:averaged_running_cost} that the Nyström features yield a consistently smaller cost than splines.
}

\section{Conclusions}
\label{s:conclusion}
In this paper, we have studied the combination of the Koopman operator framework with reproducing kernels and the Nyström method, to design linear control for nonlinear dynamical systems. In a novel theoretical analysis, we \new{have linked} novel error rates, due to the Nyström approximation, on the dynamical system's representation to its effect on the linear quadratic regulator problem. Lastly, we have evaluated the effectiveness of the proposed method both on two classic control benchmark, and on the task of dynamic cloth manipulation.
\section*{Acknowledgments}                              
	E.\ Caldarelli, A.\ Colomé and C.\ Torras acknowledge support from the project CLOTHILDE (“CLOTH manIpulation Learning from DEmonstrations”), funded by the European Research Council (ERC) under the European Union’s Horizon 2020 research and innovation programme (Advanced Grant agreement No 741930).
	E.\ Caldarelli acknowledges travel support from ELISE (GA No 951847). 
	C. Molinari is part of “GNAMPA” (INdAM) and has been supported by the projects MIUR Excellence Department awarded to DIMA UniGe CUP D33C23001110001, AFOSR FA8655-22-1-7034, MIUR-PRIN 202244A7YL and PON “Ricerca e Innovazione” 2014–2020.
	C.\ Ocampo-Martinez acknowledges the support from the project SEAMLESS (PID2023-148840OB-I00) funded by MCIN/AEI. 
	 L. Rosasco acknowledges the financial support of the European Research Council (grant SLING 819789), the European Commission (Horizon Europe grant ELIAS 101120237), the US Air Force Office of Scientific Research (FA8655-22-1-7034), the Ministry of Education, University and Research (FARE grant ML4IP R205T7J2KP; grant BAC FAIR PE00000013 funded by the EU - NGEU) and the Center for Brains, Minds and Machines (CBMM). 
\appendix

\section{Closed form for $\G$}
\label{a:risk}
We recall that we defined the risk in \eqref{e:def_risk} as
$
	\cR(W) \de \frac1n \sum_{i=1}^{n}\n*{ \psi(\xvec_{i+1}) - W ϕ(\wvec_i)}_{\cH_1}².
$

\paragraph*{Expression of the risk} 
For any orthonormal basis $(g_j)_{j∊\mathbb N}$ of $\Hspace_1$, it holds
\begin{align*}
	&\mathcal R(W)	\\
	&= \frac1n \sum_{i=1}^{n} \left[ \sum_{j\in\mathbb N}
	\innerprod{g_j, \psi(\xvec_{i+1})-W\phi(\wvec_i)}_{\mathcal H_1}^2\right] \\
	&= \frac1n\sum_{j\in\mathbb N}\left[\sum_{i=1}^{n}\left(\innerprod{g_j, \psi(\xvec_{i+1})}_{\Hspace_1}-\innerprod{W^*g_j, \phi(\wvec_i)}_{\Hspace}\right)^2\right] \\
	&= \frac1n\sum_{j\in\mathbb N}\norm{\sqrt n\Zx g_j-\sqrt nSW^*g_j}^2_{\R^n} \\
	&= \nHS{\Zx - SW^*}². 
\end{align*}
\paragraph*{Expression of $\G$} Recall $\G=\argmin_{W:\cH→\cH_1}\cR(W)+\gamma\new{\norm{W}_{\HS}²}$.
Computing the gradient of $\cR(W)$ gives
$
	\pd{\mathcal{R}(W)}{W} = - 2\Zx^*S + 2WS^*S.
$
The first-order optimality condition with additive Tikhonov regularization leads to
\begin{align}
	\G (S^*S + \gamma I) &= \Zx^*S, \nonumber\\
	\G &= \Zx^*S (S^*S + \gamma I)^{-1} \label{eq:a_b_compound_expr}.
\end{align}
\new{This gives the expression claimed in \eqref{e:def_G} by applying the Woodbury identity.}

\section{Convergence rate for $\nG-\G$}

\new{We prove here the convergence rate for $\nG-\G$ given in Theorem~\ref{r:bound_G_nG_opnorm}, using bounds on the Nyström approximation derived in Section~\ref{ss:nystroem_error}.}

\subsection{Proof of Theorem~\ref{r:bound_G_nG_opnorm}}
\label{appendix:nystrom_rate}
Via \eqref{e:def_G} and \eqref{eq:a_b_compound_expr_nystr}, denoting $\rfK=SS^*+γI$ the (weighted) regularized kernel matrix, it holds
\newcommand\nH{\new{\tilde{H}_{γ}}}
\newcommand\nHi{\new{\tilde{H}_{γ}^{-1}}}
\begin{align*}
	\G &= \Zx^*\rfK^{-1}S, \qquad
	\nG 
	=\Pout\Zx^* \nHi S\Pin,
\end{align*}
where
$
	\nHi
	\de (S\Pin S^* + \gamma I)^{-1}.
$
In particular, we can observe that
\begin{align}
	\n{\nG-\G}
	&≤  \norm{\Zx^*\rfK^{-1} S(I -  \Pin)}\nonumber \\
	&\quad+\n{\Zx^* (\rfK^{-1}-\nHi) S\Pin}\nonumber\\
	&\quad+\n{(I-\Pout)\Zx^* \nHi S\Pin} .
	\label{e:bound_G_nG_dec}
\end{align}
The first addend can be split as follows: 
\begin{align}
	\norm{\Zx^* \rfK^{-1} S\Pin^\perp}
	&≤ \norm{\Zx^* \rfK^{-1}}\norm{S\Pin^\perp}≤ \frac\kappa\gamma\norm{S \Pin^\perp}. 
	\label{e:bound_G_nG_t1}
\end{align}
By leveraging the identity $A^{-1} - B^{-1} = A^{-1}(B - A) B^{-1}$ for two invertible operators $A$ and $B$, the second addend in \eqref{e:bound_G_nG_dec} can be bounded as
\begin{align*}
	\medmath{\norm{\Zx^* ( \rfK^{-1} - \nHi ) S\Pin}}&\medmath{= \norm{\Zx^* \rfK^{-1} (S\Pin S^* - SS^*) \nHi S\Pin}}\\
	&\medmath{≤ \frac{κ}{γ} \n{\Pin^\perp S^*}² \n{ \nHi S\Pin }.} 
\end{align*}
Note that, using a polar decomposition, $S\Pin = (S\Pin S^*)^{1/2}U$ for some partial isometry $U$, and thus
\begin{align}
	\hspace{-0.3cm}\n{\nHi S\Pin }
	&≤ \n{(S\Pin S^* + \gamma I)^{-1/2}}\nonumber\\
	&\quad\cdot\!	\n{(S\Pin S^* + \gamma I)^{-1/2}(S\Pin S^*)^{1/2} }\nonumber\\
	&≤ γ^{-1/2},\!\!\!\!
	\label{e:bound_G_nG_hS}
\end{align}
so that 
\begin{align}
	\norm{\Zx^* ( \rfK^{-1} - \nHi ) S\Pin}
	&≤ \frac{κ}{γ^{3/2}} \n{\Pin^\perp S^*}².
	\label{e:bound_G_nG_t2}
\end{align}
Eventually, using again \eqref{e:bound_G_nG_hS} the third addend in \eqref{e:bound_G_nG_dec} can be bounded as
\begin{align}
	\n{\Pout^\perp \Zx^* \nHi S\Pin} 
	&≤ \n{\Pout^\perp \Zx^*}\n{ \nHi S\Pin} \nonumber\\
	&≤ \new{\gamma^{-1/2}}\n{\Pout^\perp \Zx^*} .
	\label{e:bound_G_nG_t3}
\end{align}
Putting together \eqref{e:bound_G_nG_t1}, \eqref{e:bound_G_nG_t2} and \eqref{e:bound_G_nG_t3}, we get
$
	\n{\nG-\G}
	≤ \frac{κ}{γ} \norm{\Pin^\perp S^*} + \frac{κ}{γ^{3/2}} \n{\Pin^\perp S^*}²
	+ \new{\gamma^{-1/2}}\n{\Pout^\perp \Zx^*}.
$

We recall that $\Pin = \vvec{\Pinx & 0 \\ 0 & I}$, so that $\n{\Pin^\perp S^*}=\n{\Pinx^\perp \Sx^*}$, and both terms $\n{\Pinx^\perp \Sx^*}$ and $\n{\Pout^\perp \Zx^*}$ can be bounded using Lemma \ref{r:bound_Pin_Pout_halfcov}, which is itself derived from \cite{rudi2015less}. We thus get
\begin{align}
	\n{\nG-\G}
	&≤ \prt*{\frac{κ}{\nnew{γ}}+\new{\frac{1}{\gamma^{1/2}}}} 4\supfmap\sqrt{\frac{3}{m}\log\prt*{\frac{8m}{5δ}}} \nonumber\\
	&\quad+ \frac{48\supfmap^3}{γ^{3/2}} \frac{1}{m}\log\prt*{\frac{8m}{5δ}}.
\end{align}

\subsection{Error induced by the Nyström approximation}
\label{ss:nystroem_error}

The following lemma is a restatement of~\cite[Lemma 6]{rudi2015less} with slightly different constants. Albeit being originally written in a random design scenario, the same result holds when conditioning on the data.
\new{In the following, we denote $\kron$ the outer-product in $\cH_1$, \ie the rank-one operator defined as $(a \kron b) c= \ip{b,c}_{\cH_1} a$.}
\begin{lemma}%
	\label{uniform_nys_approx}
	Let $x₁,…,x_n∊\bR^d$ and denote $\cC=\tfrac{1}{n} \sum_{i=1}^n ψ(x_i)\kron ψ(x_i)$, where $ψ$ is the canonical feature map associated to a kernel satisfying Assumption \ref{a:bounded_kernel}.  
	Let $\ldm{1},…,\ldm{m}$ be drawn uniformly from all partitions of cardinality $m$ of $\cb{x₁,…,x_n}$. 
	Denoting $\Pi_m$ the orthogonal projection onto $\spa(ψ(\ldm{1}),…,ψ(\ldm{m}))$, 
	for any $γ∊]0,\n{\cC}]$, it holds 
	$ \n{(I-\Pi_m) (\cC+γI)^{1/2}}^2 ≤ 3γ,$
	with probability at least $1-δ$ provided 
	$ m\geq ( 2+ 5 \tfrac{κ²}{γ} )\log\prt*{\tfrac{4κ²}{γδ}}. $ 
\end{lemma}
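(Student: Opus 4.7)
The plan is to follow the strategy of Rudi, Camoriano, and Rosasco (the reference already invoked), adapted to the fixed-design formulation stated here. Write $\mathcal{C}_\gamma = \mathcal{C} + \gamma I$ and $\mathcal{C}_m = \tfrac{1}{m}\sum_{j=1}^m \psi(\tilde x_j)\otimes\psi(\tilde x_j)$ for the empirical covariance on the Nyström landmarks. The argument has two main stages: (i) an algebraic reduction of the projection norm to a ratio between $\mathcal{C}_m$ and $\mathcal{C}$, and (ii) a concentration inequality to control this ratio.

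First, I would establish the standard Nyström algebraic identity in operator form:
\[
\|(I-\Pi_m)\mathcal{C}_\gamma^{1/2}\|^2 \;\leq\; \gamma\,\bigl\|(\mathcal{C}_m + \gamma I)^{-1/2} \mathcal{C}_\gamma^{1/2}\bigr\|^2.
\]
This is derived by writing $\Pi_m = L_m(L_m^* L_m)^\dagger L_m^*$ for the sampling-feature operator $L_m:\mathbb{R}^m\to\mathcal{H}_1$, $L_m e_j = \psi(\tilde x_j)/\sqrt{m}$, and using $L_m L_m^* = \mathcal{C}_m$ together with a Schur-complement / min-max manipulation. This reduces the question to controlling the regularized subsampled covariance against the regularized population covariance.

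Second, I would bound the operator $\mathcal{C}_\gamma^{-1/2} \mathcal{C}_m \mathcal{C}_\gamma^{-1/2}$ via a Bernstein-type inequality. Define the centered self-adjoint random operators
\[
Y_j \;=\; \mathcal{C}_\gamma^{-1/2}\bigl(\psi(\tilde x_j) \otimes \psi(\tilde x_j) - \mathcal{C}\bigr)\mathcal{C}_\gamma^{-1/2}.
\]
Since landmarks are drawn uniformly from $\{x_1,\ldots,x_n\}$, $\mathbb{E}[Y_j] = 0$; Assumption~\ref{a:bounded_kernel} gives $\|Y_j\| \leq 2\kappa^2/\gamma$ and $\|\mathbb{E}[Y_j^2]\| \leq \kappa^2/\gamma$. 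An operator Bernstein inequality then yields, with probability at least $1-\delta$, the bound $\|\mathcal{C}_\gamma^{-1/2}(\mathcal{C}_m - \mathcal{C})\mathcal{C}_\gamma^{-1/2}\| \leq 2/3$ provided $m \geq (2 + 5\kappa^2/\gamma)\log(4\kappa^2/(\gamma\delta))$, where the explicit constants $2,5,4$ come from balancing the linear and quadratic Bernstein terms against the prefactor $\kappa^2$. Combining the two stages: the spectral inequality gives $\mathcal{C}_m + \gamma I \succeq \tfrac{1}{3}\mathcal{C}_\gamma$, hence $\|(\mathcal{C}_m+\gamma I)^{-1/2}\mathcal{C}_\gamma^{1/2}\|^2 \leq 3$, and substitution into the identity of stage one yields the claim.

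The main obstacle is the concentration step, for two reasons. First, sampling a partition of cardinality $m$ corresponds to sampling \emph{without} replacement, whereas textbook operator Bernstein inequalities assume i.i.d.\ samples; one must either invoke a without-replacement matrix Bernstein (e.g., matrix Hoeffding--Serfling, or the results of Mackey et al.) or reduce to i.i.d.\ sampling at the cost of a constant factor. Second, tracking the constants sharply through Bernstein is what produces the precise threshold and is exactly what the remark "slightly different constants" refers to in the paper; relaxing this would make the threshold cleaner at the cost of losing the concrete $2+5\kappa^2/\gamma$ form needed to feed into Theorem~\ref{r:bound_G_nG_opnorm}. The operator identity in stage one, while standard in matrix form, also requires mild care to state in infinite dimension (using the Moore--Penrose inverse on the finite-rank operator $L_m^*L_m$), but is otherwise routine.
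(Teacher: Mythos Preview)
Your proposal is correct and follows essentially the same approach as the paper, which in turn is the argument of Rudi, Camoriano, and Rosasco: the algebraic reduction to $\gamma/(1-\beta(\gamma))$ with $\beta(\gamma)=\|\mathcal{C}_\gamma^{-1/2}(\mathcal{C}_m-\mathcal{C})\mathcal{C}_\gamma^{-1/2}\|$, then operator Bernstein for the concentration. The paper's proof is terser than yours---it simply cites the Rudi et al.\ lemma for both stages and concentrates its effort on the constant-tracking, solving $\beta(\gamma)\leq 2/3$ as a quadratic inequality in $\sqrt{m}$ (with $\beta(\gamma)\leq \tfrac{2w}{3m}+\sqrt{\tfrac{2w\kappa^2}{\gamma m}}$, $w=\log(4\kappa^2/(\gamma\delta))$) to obtain the threshold $m\geq(2+5\kappa^2/\gamma)w$; you assert these constants rather than derive them, so that is the one place where your write-up is less explicit than the paper's. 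Your concern about sampling without replacement is legitimate: the paper's proof applies the concentration step ``with the $v_i$ drawn i.i.d.\ according to the empirical distribution,'' i.e.\ treats the landmarks as sampled with replacement, so the without-replacement subtlety you flag is not addressed there either.
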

\begin{proof}
	We apply exactly the same proof as in \cite[Lemma 6]{rudi2015less} but conditioning on the data (i.e., in particular applying \cite[Prop. 8]{rudi2015less} with the $v_i$ drawn \tiid according to the empirical distribution and $Q$ being the empirical covariance), with $\cN_∞(γ)=κ²/γ$. 
	Denoting $w=\log\prt*{\tfrac{4\supk}{γδ}}$, we end up with the bound
	$
		\n{(I-\Pi_m)(\cC+γI)^{1/2}}² ≤ \frac{γ}{1-β(γ)} ,
	$
	where
	$
		β(γ) ≤ \frac{2w}{3m} + \sqrt{\frac{2w\supk}{γ m} }.
	$
	We now derive slightly better constants than in the original lemma in order to satisfy $β(γ)≤2/3$ (which gives the claimed result). Indeed,
$
		β(γ) ≤ 2/3
		⇔ m - \frac{3}{2} \sqrt{\frac{2w\supk}{γ}} \sqrt{m} - w ≥ 0. 
$
	We solve this inequality as a second-order equation in $\sqrt{m}$, with discriminant $Δ=w(\frac{9\supk}{2γ}+4)$, which is positive whenever $γ<4\supk δ^{-1}$.  
	A sufficient condition to satisfy $β(γ)≤2/3$ is thus
	$
		m ≥ \prt*{\frac{3}{4} \sqrt{\frac{2w\supk}{γ}}+ \frac{1}{2} \sqrt{w(\frac{9\supk}{2γ}+4)} }².
	$
	Using the identity $2(a²+b²)≥(a+b)²$, we get the sufficient condition
	$
		m 
		≥ w \prt*{ 5 \frac{\supk}{γ} + 2 }.
	$
\end{proof}

\begin{lemma}
	\label{r:bound_Pin_Pout_halfcov}
	Under Assumption \ref{a:bounded_kernel}, 
	provided that $\Pinx$ and $\Pout$ are built by sampling uniformly $m$ samples respectively from $\cb{\bf x₁,…,\bf x_n}$ and $\cb{\bf x₂,…,\bf x_{n+1}}$, it holds with probability $1-δ$
$
		\max(\n{\Pinx^\perp \Sx^*}, \n{\Pout^\perp \Zx^*}) 
		≤ 4\supfmap\sqrt{\frac{3}{m}\log\prt*{\frac{8m}{5δ}}}\nonumber .
$
\end{lemma}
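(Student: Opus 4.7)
The plan is to apply Lemma~\ref{uniform_nys_approx} twice---once to the input sample $(\xvec_1,\dots,\xvec_n)$ and once to the output sample $(\xvec_2,\dots,\xvec_{n+1})$---and combine the two resulting events by a union bound. The key observation is that bounding $\n{\Pinx^\perp \Sx^*}$ amounts to controlling how well the empirical covariance $\Cin \de \Sx^*\Sx = \tfrac{1}{n}\sum_{i=1}^n \psi(\xvec_i)\otimes\psi(\xvec_i)$ is approximated by its projection onto the Nyström subspace, which is precisely what Lemma~\ref{uniform_nys_approx} quantifies.

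First I would write the squared norm as
\[
 \n{\Pinx^\perp \Sx^*}^2 \;=\; \n{\Pinx^\perp \Sx^*\Sx\Pinx^\perp} \;=\; \n{\Pinx^\perp \Cin \Pinx^\perp},
\]
and analogously $\n{\Pout^\perp \Zx^*}^2 = \n{\Pout^\perp \Cout \Pout^\perp}$ with $\Cout \de \Zx^*\Zx$. Since $\Cin \preceq \Cin+\gamma I$, the statement of Lemma~\ref{uniform_nys_approx} immediately yields
\[
 \n{\Pinx^\perp \Cin \Pinx^\perp} \;\le\; \n{\Pinx^\perp (\Cin+\gamma I)^{1/2}}^2 \;\le\; 3\gamma,
\]
holding with probability at least $1-\delta/2$ provided the sample-size condition of the lemma is satisfied; the same reasoning gives the analogous bound for $\Pout^\perp \Cout \Pout^\perp$, each invocation being with confidence $1-\delta/2$ so that a union bound yields probability $1-\delta$ for the maximum.

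Next, I would tune $\gamma$ as an explicit function of $m$ so that the generic bound $\sqrt{3\gamma}$ takes precisely the claimed form. Setting
\[
 \gamma \;=\; \frac{16\supk^2}{m}\log\!\Bigl(\tfrac{8m}{5\delta}\Bigr)
\]
gives $\sqrt{3\gamma} = 4\supfmap\sqrt{(3/m)\log(8m/(5\delta))}$, matching the right-hand side of the lemma. It remains to check that this value of $\gamma$ satisfies the sufficient condition of Lemma~\ref{uniform_nys_approx} (with $\delta$ replaced by $\delta/2$), namely
\[
 m \;\ge\; \Bigl(2+5\tfrac{\supk^2}{\gamma}\Bigr)\log\!\Bigl(\tfrac{8\supk^2}{\gamma\delta}\Bigr).
\]
With the chosen $\gamma$ one has $\supk^2/\gamma = m/\bigl(16\log(8m/(5\delta))\bigr)$, so the right-hand side is of order $m/\log(m/\delta)\cdot\log(m/(\delta\log m))$, which is majorised by $m$ for all relevant $m$; this is the routine algebraic step.

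The main obstacle is bookkeeping rather than anything conceptually delicate: the constants $8$ and $5$ appearing in the target logarithm arise from the combined effect of the union bound (doubling $\delta^{-1}$) and the factor $16$ in the choice of $\gamma$, and they must be tracked carefully through the inequality above in order to recover exactly the stated bound. Apart from this algebraic verification, the proof reduces to concatenating the squared-norm identity, the Nyström approximation lemma, the choice of $\gamma$, and a union bound, in that order.
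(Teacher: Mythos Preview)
Your proposal is correct and follows essentially the same approach as the paper: apply Lemma~\ref{uniform_nys_approx} to each of the two empirical covariances, choose $\gamma=\tfrac{16\kappa^2}{m}\log\!\bigl(\tfrac{8m}{5\delta}\bigr)$ so that $\sqrt{3\gamma}$ equals the target bound, verify the sample-size condition, and union bound. The only cosmetic difference is that the paper invokes a polar decomposition to write $\n{\Pinx^\perp \Sx^*}=\n{\Pinx^\perp(\Sx^*\Sx)^{1/2}}$, whereas you reach the same conclusion via the identity $\n{T}^2=\n{TT^*}$; the two computations are equivalent.
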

\begin{proof}
	Using a polar decomposition and applying Lemma \ref{uniform_nys_approx} with $m ≥ ( 2+ 5 \tfrac{κ²}{γ} ) \log\prt*{\tfrac{4κ²}{γδ}}$, it holds
	\begin{align}
		\n{\Pinx^\perp \Sx^*} 
		&= \n{\Pinx^\perp (\Sx^*\Sx)^{1/2}}  \\
		&≤ \n{\Pinx^\perp (\Sx^*\Sx+γI)^{1/2}} 
		 ≤ \sqrt{3γ},
	\end{align}
	and a similar argument holds for $\n{\Pout^\perp \Zx^*}$. \\
	The condition
	$
		m 
		≥ ( 2+ 5 \tfrac{κ²}{γ} ) \log\prt*{\tfrac{4κ²}{γδ}}
	$
	is in particular satisfied provided that 
	$
	m/2 ≥ \max(2, 5 \tfrac{κ²}{γ}) \log\prt*{\tfrac{4κ²}{γδ}}
	$, 
	which holds taking $γ=\frac{16κ²}{m}\log(\tfrac{4m}{5δ})$. 
	This yields the claimed result via a union bound.
\end{proof}

\section{Proof sketch for Lemma~\ref{lemma:convergence_p_operator}}
\label{appendix:rates_riccati}
Let $L=A + BK$ be the closed-loop operator of the system specified by~\eqref{eq:dynamics_compound}, where $K$ is the stabilizing gain defined in~\eqref{e:def_K}. The proof of the result, which closely follows the proof of~\cite[Proposition 2]{mania2019certainty}, starts by defining a set $\cS$ of perturbations of the Riccati operator $P$ as
$
	\cS = \{X:\cH_1\to\cH_1, X = X^*, P + X \succcurlyeq 0\}.
$
Then, we can derive a fixed-point operator equation
\begin{equation}
	X = \Phi X,\label{e:fixed_point_eq_riccati}
\end{equation}
where 
$
	\Phi X = \cTinv[F(X + P, A, B) - F(X + P, \Atilde, \Btilde) - \mathcal R X]$,
	$\cT X = I - L^*XL$, and 
	 $\cR X = L^*X[I + B^*PB(P + X)]^{-1}B^*PBXL.$
Let us define $\CLHo$ as the space of continuous linear operators from $\cH_1$ to $\cH_1$, and $\cL(\CLHo)$ as the space of continuous linear operators from $\CLHo$ to $\CLHo$. The operator $\cT:\CLHo\to\CLHo$ is invertible in $\cL(\CLHo)$. This fact can be proven by showing that $1$ is not in the spectrum of operator $\cD:\CLHo\to\CLHo$, defined as $\cD X = L^*XL$. This is achieved by upper bounding the spectral radius of $\cD$ by means of Gelfand's formula, and by observing that the closed-loop dynamics are stable, i.e., $L$ has spectral radius smaller than 1. Moreover, $\cTinv$ is bounded. This can be proven by showing that the Neumann series $\sum_{k=0}^\infty \cD^k$ is absolutely convergent in operator norm, and therefore equal to $\cTinv$.
Then, it can be shown that the unique solution of~\eqref{e:fixed_point_eq_riccati} belonging to $\cS$ is \new{$\Ptilde-P$}, since, by construction of the fixed point equation, it must hold $F(X + P, \Atilde, \Btilde) = 0$. 

We can now define the following closed subset of $\cS$,
$
	\mathcal S_{\nu} = \{X :\norm{X}\leq \nu, X = X^*, P + X \succcurlyeq 0\}.
$
Let $\norm{\Atilde -A}\leq \epsilon$, $\norm{\Btilde -B}\leq \epsilon$, where $\epsilon$ is the error rate in Theorem~\ref{r:bound_G_nG_opnorm}. \new{Let $N\de BR^{-1}B^*$.} If we pick
\begin{align*}
	\nu= 	\min\left\{\norm{N}^{-1}, \frac12\right., &6\left.\epsilon \frac{\tau(L, \new\zeta)^2}{1 - \new\zeta^2}\normp A^2 \normp P^2\right.\\
	&\left. \cdot\normp B \normp {R^{-1}} \right\},\nonumber
\end{align*}
for $X, X_1, X_2\in \cS_\nu$, we can show that $\Phi X\in \cS_\nu$, and $\exists \eta <1$ such that $\norm{\Phi X_1 - \Phi X_2} \leq \eta \norm{X_1- X_2}$, that is, $\Phi$ is a contraction operator. Then, we can apply Banach fixed-point theorem, meaning that the self-adjoint fixed point $\Ptilde - P$ of $\Phi$ belongs to $\cS_{\nu}$, i.e., the error on the Riccati operator due to the Nyström approximation has bounded operator norm as a function of $\epsilon$.

\section{Computable expression for Nyström vector dynamics and $C$}
\label{appendix:computable_expressions}
In this appendix, we report the computable expression for the operator 
appearing 
in~\eqref{eq:nystrom_dynamics_vector_partial}, as well as the expression of the reconstruction matrix $C$ in \eqref{eq:reconstruction}. 
They are the formulas used in practice for the implementation of the method. To simplify the notation, for operators $M, N$, we denote a block-diagonal operator as $\bd{M, N} = \vvec{M &0\\ 0&N}$.
We introduce the following additional notation. In the sequel we refer to the nonlinear kernel $k$:
\begin{itemize}[topsep=0pt]
	\item $K_{nm, \textrm{out}}$ is the kernel between the regression outputs and the output landmarks;
	\item $K_{nm, \textrm{in}}$ is the kernel between the regression inputs (state only) and the input landmarks;
	\item $K_{m, \textrm{in}}$ is the kernel at the input landmarks;
	\item $K_{m, \textrm{in},\textrm{out}}$ is the kernel between the input landmarks and the output landmarks.
\end{itemize}

Let us define the sampling operator for the input Nyström landmarks as
$
	\Stildein:\Hspace_1\rightarrow\R^m,\
	\Stildein g = [g(\ldmi{1}), …,g(\ldmi{m})]ᵀ.
$
Note that $\ran(\Stildein^*)=\Hin$. 
We denote 
$\Stildein = U\Sigma V^*$ its 
singular value decomposition, where $U:\R^t\rightarrow\R^m$, $\Sigma:\R^t\rightarrow\R^t$, $V:\R^t\rightarrow \tilde \cH_1$. 
Here, $\Sigma$ is diagonal and strictly positive. It holds $\Pin = \bd{VV^*, I}$ and thus
\nnew{
\begin{align*}
	&\Bout^* \Zx^* (S\Pin S^* + \gamma I)^{-1} S\bd{\Pinx\Bout , I } \\
	&= \Bout^* \Zx^*S\bd{\Stildein^* , I } \bd{U\Sigma^{-1} , I } \\
	&\quad\cdot\prt*{\bd{V^* , I } S^*S \bd{V , I }+ \gamma I}^{-1}\\
	&\quad\cdot \bd{\Sigma^{-1}U^*, I } \bd{\Stildein , I }\bd{\Bout , I }.
\end{align*}}%
Now, we can observe that $V^*V=I$. Moreover, $\prt*{\bd{V^* , I } S^*S \bd{V , I }+ \gamma I}$ is full-rank, $\bd{U\Sigma^{-1} ,I }$ is full column-rank and $\bd{\Sigma^{-1} U^*, I }$ is full row-rank, 
then by \cite[Section 1.6]{adiben-israel2003GeneralizedInversesTheory} we have that
\nnew{
\begin{align*}
	&\bd{U \Sigma^{-1}, I }\prt*{\bd{V^* , I } S^*S \bd{V , I }+ \gamma I}^{-1}\bd{\Sigma^{-1}U^*, I }\\
	&\quad= \prt*{\bd{\Stildein , I } S^*S \bd{\Stildein^* , I }+ \gamma \bd{\Stildein\Stildein^*, I }}^\dagger.
\end{align*}}
This yields
\nnew{\begin{align*}
	&\Bout^*\nG \bd{\Bout , I}\\
	&\quad= (K_{m, \textrm{out}}^{\dagger})^{1/2}K_{mn, \textrm{out}}\vvec{K_{nm, \textrm{in}} &\sqrt n S_{\uvec}}\\
	&\qquad\cdot\prt*{\vvec{K_{mn, \textrm{in}}\\ \sqrt n\Suᵀ}
		\vvec{ K_{nm, \textrm{in}} &\sqrt n \Su }
		\!+\! \gamma n \bd{
			K_{m,\textrm{in}} , I } }^{\dagger} \\
	&\qquad\cdot\bd{K_{m, \textrm{in},\textrm{out}} (K_{m, \textrm{out}}^†)^{1/2}, I}.
\end{align*}}%
Moreover, having stacked the regression outputs in a matrix $X_{+1}\in \R^{d\times n}$, the matrix $C$ from \eqref{eq:reconstruction} takes the form
$
	C = X_{+1}K_{nm, \textrm{out}}
	(K_{mn, \textrm{out}}K_{nm, \textrm{out}} + \lambda n K_{m, \textrm{out}})^{-1}K_{m,\textrm{out}}^{1/2}.
$
\bibliographystyle{abbrv}   
\bibliography{biblio} 

\begin{thebibliography}{10}

\bibitem{abraham2017model}
I.~Abraham, G.~de~la Torre, and T.~Murphey.
\newblock Model-based control using {K}oopman operators.
\newblock {\em Robotics: Science and Systems XIII}, 2017.

\bibitem{abraham2019active}
I.~Abraham and T.~D. Murphey.
\newblock Active learning of dynamics for data-driven control using {K}oopman
  operators.
\newblock {\em IEEE Trans. Robot.}, 35(5):1071--1083, 2019.

\bibitem{amadio2023controlled}
F.~Amadio, J.~A. Delgado-Guerrero, A.~Colom{\'e}, and C.~Torras.
\newblock Controlled {G}aussian process dynamical models with application to
  robotic cloth manipulation.
\newblock {\em Int. J. Dyn. Control}, 11:3209--3219, 2023.

\bibitem{aronszajn1950theory}
N.~Aronszajn.
\newblock Theory of reproducing kernels.
\newblock {\em Trans. Am. Math. Soc.}, 68(3):337--404, 1950.

\bibitem{adiben-israel2003GeneralizedInversesTheory}
A.~Ben-Israel and T.~N.~E. Greville.
\newblock {\em Generalized Inverses: {{Theory}} and Applications}.
\newblock {{CMS}} Books in Mathematics. {Springer}, 2 edition, 2003.

\bibitem{bensoussan2007representation}
A.~Bensoussan, G.~Da~Prato, M.~C. Delfour, and S.~K. Mitter.
\newblock {\em Representation and control of infinite dimensional systems},
  volume~1.
\newblock Springer, 2007.

\bibitem{berlinet2011reproducing}
A.~Berlinet and C.~Thomas-Agnan.
\newblock {\em Reproducing kernel Hilbert spaces in probability and
  statistics}.
\newblock Springer Science \& Business Media, 2011.

\bibitem{bevanda2024koopman}
P.~Bevanda, M.~Beier, A.~Lederer, S.~Sosnowski, E.~H{\"u}llermeier, and
  S.~Hirche.
\newblock {K}oopman kernel regression.
\newblock {\em Adv. Neural Inf. Process Syst.}, 36, 2024.

\bibitem{bevanda2021koopman}
P.~Bevanda, S.~Sosnowski, and S.~Hirche.
\newblock {K}oopman operator dynamical models: Learning, analysis and control.
\newblock {\em Annu. Rev. Control}, 52:197--212, 2021.

\bibitem{brunton2022modern}
S.~L. Brunton, M.~Budišić, E.~Kaiser, and J.~N. Kutz.
\newblock Modern {K}oopman theory for dynamical systems.
\newblock {\em SIAM Rev.}, 64(2):229--340, 2022.

\bibitem{coltraro2022inextensible}
F.~Coltraro, J.~Amor{\'o}s, M.~Alberich-Carramiñana, and C.~Torras.
\newblock An inextensible model for the robotic manipulation of textiles.
\newblock {\em Appl. Math. Model.}, 101:832--858, 2022.

\bibitem{das2020koopman}
S.~Das and D.~Giannakis.
\newblock {K}oopman spectra in reproducing kernel {H}ilbert spaces.
\newblock {\em Appl. Comput. Harmon. Anal.}, 49(2):573--607, 2020.

\bibitem{dean2020sample}
S.~Dean, H.~Mania, N.~Matni, B.~Recht, and S.~Tu.
\newblock On the sample complexity of the linear quadratic regulator.
\newblock {\em Found. Comput. Math.}, 20(4):633--679, 2020.

\bibitem{degennaro2019ScalableExtendedDynamic}
A.~M. DeGennaro and N.~M. Urban.
\newblock Scalable {{Extended Dynamic Mode Decomposition}} using {{Random
  Kernel Approximation}}.
\newblock {\em SIAM J. Sci. Comput.}, 41(3):A1482--A1499, 2019.

\bibitem{driessen2023koopman}
B.~J. Driessen.
\newblock {K}oopman operator regression based surrogate modelling for control.
\newblock Master's thesis, Eindhoven University of Technology, November 2023.

\bibitem{ahmad2023sketch}
T.~El~Ahmad, L.~Brogat-Motte, P.~Laforgue, and F.~d’Alch{\'e} Buc.
\newblock Sketch in, sketch out: Accelerating both learning and inference for
  structured prediction with kernels.
\newblock In {\em International Conference on Artificial Intelligence and
  Statistics}, pages 109--117. PMLR, 2024.

\bibitem{fazel2018global}
M.~Fazel, R.~Ge, S.~Kakade, and M.~Mesbahi.
\newblock Global convergence of policy gradient methods for the linear
  quadratic regulator.
\newblock In {\em International Conference on Machine Learning}, pages
  1467--1476. PMLR, 2018.

\bibitem{giannakis2021learning}
D.~Giannakis, A.~Henriksen, J.~A. Tropp, and R.~Ward.
\newblock Learning to forecast dynamical systems from streaming data.
\newblock {\em SIAM J. Appl. Dyn. Syst.}, 22(2):527--558, 2023.

\bibitem{gibson2022koopman}
A.~J. Gibson, M.~L. Calvisi, and X.~C. Yee.
\newblock {K}oopman linear quadratic regulator using complex eigenfunctions for
  nonlinear dynamical systems.
\newblock {\em SIAM J. Appl. Dyn. Syst.}, 21(4):2463--2486, 2022.

\bibitem{guo2022tutorial}
Y.~Guo, B.~Houska, and M.~E. Villanueva.
\newblock A tutorial on {P}ontryagin-{K}oopman operators for infinite horizon
  optimal control.
\newblock In {\em 2022 IEEE 61st Conference on Decision and Control (CDC)},
  pages 6800--6805. IEEE, 2022.

\bibitem{hager1976convergence}
W.~W. Hager and L.~L. Horowitz.
\newblock Convergence and stability properties of the discrete {R}iccati
  operator equation and the associated optimal control and filtering problems.
\newblock {\em SIAM J. Control Optim.}, 14(2):295--312, 1976.

\bibitem{haggerty2023control}
D.~A. Haggerty, M.~J. Banks, E.~Kamenar, A.~B. Cao, P.~C. Curtis, I.~Mezi{\'c},
  and E.~W. Hawkes.
\newblock Control of soft robots with inertial dynamics.
\newblock {\em Sci. Robot.}, 8(81):eadd6864, 2023.

\bibitem{hao2024deep}
W.~Hao, B.~Huang, W.~Pan, D.~Wu, and S.~Mou.
\newblock Deep {K}oopman learning of nonlinear time-varying systems.
\newblock {\em Automatica}, 159:111372, 2024.

\bibitem{kaiser2021data}
E.~Kaiser, J.~N. Kutz, and S.~L. Brunton.
\newblock Data-driven discovery of {K}oopman eigenfunctions for control.
\newblock {\em Mach. Learn.-Sci. Technol.}, 2(3):035023, 2021.

\bibitem{kakade2002approximately}
S.~Kakade and J.~Langford.
\newblock Approximately optimal approximate reinforcement learning.
\newblock In {\em International Conference on Machine Learning}, pages
  267--274. PMLR, 2002.

\bibitem{khosravi2023RepresenterTheoremLearning}
M.~Khosravi.
\newblock Representer theorem for learning {{Koopman}} operators.
\newblock {\em IEEE Trans. Autom. Control}, 68(5):2995--3010, 2023.

\bibitem{klus2020kernel}
S.~Klus, F.~N{\"u}ske, and B.~Hamzi.
\newblock Kernel-based approximation of the {K}oopman generator and
  schr{\"o}dinger operator.
\newblock {\em Entropy}, 22(7):722, 2020.

\bibitem{klus2020EigendecompositionsTransferOperators}
S.~Klus, I.~Schuster, and K.~Muandet.
\newblock Eigendecompositions of {{Transfer Operators}} in {{Reproducing Kernel
  Hilbert Spaces}}.
\newblock {\em J Nonlinear Sci}, 30(1):283--315.

\bibitem{konstantinov1993perturbation}
M.~M. Konstantinov, P.~H. Petkov, and N.~D. Christov.
\newblock Perturbation analysis of the discrete {R}iccati equation.
\newblock {\em Kybernetika}, 29(1):18--29, 1993.

\bibitem{koopman1931hamiltonian}
B.~O. Koopman.
\newblock Hamiltonian systems and transformation in {H}ilbert space.
\newblock {\em Proc. Natl. Acad. Sci. U.S.A.}, 17(5):315--318, 1931.

\bibitem{korda2018linear}
M.~Korda and I.~Mezi{\'c}.
\newblock Linear predictors for nonlinear dynamical systems: {K}oopman operator
  meets model predictive control.
\newblock {\em Automatica}, 93:149--160, 2018.

\bibitem{korda2020optimal}
M.~Korda and I.~Mezi{\'c}.
\newblock Optimal construction of {K}oopman eigenfunctions for prediction and
  control.
\newblock {\em IEEE Trans. Autom. Control}, 65(12):5114--5129, 2020.

\bibitem{kostic2022learning}
V.~Kostic, P.~Novelli, A.~Maurer, C.~Ciliberto, L.~Rosasco, and M.~Pontil.
\newblock Learning dynamical systems via {K}oopman operator regression in
  reproducing kernel {H}ilbert spaces.
\newblock In {\em Adv. Neural Inf. Process. Syst.}, volume~35, pages
  4017--4031, 2022.

\bibitem{luque2022model}
A.~Luque, D.~Parent, A.~Colomé, C.~Ocampo-Martinez, and C.~Torras.
\newblock Model predictive control for dynamic cloth manipulation: Parameter
  learning and experimental validation.
\newblock {\em IEEE Trans. Control Syst. Technol.}, 32(4):1254--1270, 2024.

\bibitem{mania2019certainty}
H.~Mania, S.~Tu, and B.~Recht.
\newblock Certainty equivalence is efficient for linear quadratic control.
\newblock In {\em Adv Neural Inf Process Syst}, volume~32, pages 10154--10164,
  2019.

\bibitem{meanti2023EstimatingKoopmanOperators}
G.~Meanti, A.~Chatalic, V.~Kostic, P.~Novelli, M.~Pontil, and L.~Rosasco.
\newblock Estimating {{Koopman}} operators with sketching to provably learn
  large scale dynamical systems.
\newblock In {\em Adv. Neural Inf. Process. Syst.}, 2023.

\bibitem{mezic2021koopman}
I.~Mezi{\'c}.
\newblock Koopman operator, geometry, and learning of dynamical systems.
\newblock {\em Not. Am. Math. Soc.}, 68(7):1087--1105, 2021.

\bibitem{moyalan2023data}
J.~Moyalan, H.~Choi, Y.~Chen, and U.~Vaidya.
\newblock Data-driven optimal control via linear transfer operators: A convex
  approach.
\newblock {\em Automatica}, 150:110841, 2023.

\bibitem{muandet2017KernelMeanEmbedding}
K.~Muandet, K.~Fukumizu, B.~Sriperumbudur, and B.~Schölkopf.
\newblock Kernel {{Mean Embedding}} of {{Distributions}}: {{A Review}} and
  {{Beyond}}.
\newblock {\em Foundations and Trends® in Machine Learning}, 10(1-2):1--141.

\bibitem{musco2017RecursiveSamplingNystrom}
C.~Musco and C.~Musco.
\newblock Recursive sampling for the nystrom method.
\newblock In {\em Adv. Neural Inf. Process. Syst.}, pages 3833--3845.

\bibitem{nuske2023efficient}
F.~N{\"u}ske and S.~Klus.
\newblock Efficient approximation of molecular kinetics using random {F}ourier
  features.
\newblock {\em J. Chem. Phys.}, 159(7), 2023.

\bibitem{nuske2023finite}
F.~N{\"u}ske, S.~Peitz, F.~Philipp, M.~Schaller, and K.~Worthmann.
\newblock Finite-data error bounds for {K}oopman-based prediction and control.
\newblock {\em J. Nonlinear Sci.}, 33(1):14, 2023.

\bibitem{nystroem1930UeberPraktischeAufloesung}
E.~J. Nyström.
\newblock Über die praktische auflösung von integralgleichungen mit
  anwendungen auf randwertaufgaben.
\newblock 54:185--204.

\bibitem{otto2021KoopmanOperatorsEstimation}
S.~E. Otto and C.~W. Rowley.
\newblock {K}oopman {{Operators}} for {{Estimation}} and {{Control}} of
  {{Dynamical Systems}}.
\newblock 4(1):59--87.

\bibitem{peitz2019koopman}
S.~Peitz and S.~Klus.
\newblock {K}oopman operator-based model reduction for switched-system control
  of {PDE}s.
\newblock {\em Automatica}, 106:184--191, 2019.

\bibitem{philipp2024error}
F.~M. Philipp, M.~Schaller, K.~Worthmann, S.~Peitz, and F.~N{\"u}ske.
\newblock Error bounds for kernel-based approximations of the {K}oopman
  operator.
\newblock {\em Appl. Comput. Harmon. Anal.}, 71:101657, 2024.

\bibitem{proctor2018generalizing}
J.~L. Proctor, S.~L. Brunton, and J.~N. Kutz.
\newblock Generalizing {K}oopman theory to allow for inputs and control.
\newblock {\em SIAM J. Appl. Dyn. Syst.}, 17(1):909--930, 2018.

\bibitem{rudi2015less}
A.~Rudi, R.~Camoriano, and L.~Rosasco.
\newblock Less is more: {N}ystr{\"o}m computational regularization.
\newblock In {\em Adv. Neural Inf. Process. Syst.}, volume~28, pages
  1657--1665, 2015.

\bibitem{scholkopf2001generalized}
B.~Sch{\"o}lkopf, R.~Herbrich, and A.~J. Smola.
\newblock A generalized representer theorem.
\newblock In {\em International conference on computational learning theory},
  pages 416--426. Springer, 2001.

\bibitem{scholkopf2002learning}
B.~Sch{\"o}lkopf and A.~J. Smola.
\newblock {\em Learning with kernels: support vector machines, regularization,
  optimization, and beyond}.
\newblock MIT press, 2002.

\bibitem{shi2022deep}
H.~Shi and M.~Q.-H. Meng.
\newblock Deep {K}oopman operator with control for nonlinear systems.
\newblock {\em IEEE Robot. Autom. Lett.}, 7(3):7700--7707, 2022.

\bibitem{simchowitz2018learning}
M.~Simchowitz, H.~Mania, S.~Tu, M.~I. Jordan, and B.~Recht.
\newblock Learning without mixing: Towards a sharp analysis of linear system
  identification.
\newblock In {\em Conference on Learning Theory}, pages 439--473. PMLR, 2018.

\bibitem{steinwart2008support}
I.~Steinwart and A.~Christmann.
\newblock {\em Support vector machines}.
\newblock Springer Science \& Business Media, 2008.

\bibitem{wasserman2006all}
L.~Wasserman.
\newblock {\em All of nonparametric statistics}.
\newblock Springer Science \& Business Media, 2006.

\bibitem{williams2000using}
C.~Williams and M.~Seeger.
\newblock Using the {N}ystr{\"o}m method to speed up kernel machines.
\newblock In {\em Adv. Neural Inf. Process. Syst.}, volume~13, pages 682--688,
  2000.

\bibitem{yin2022embedding}
H.~Yin, M.~C. Welle, and D.~Kragic.
\newblock Embedding {K}oopman optimal control in robot policy learning.
\newblock In {\em IEEE/RSJ International Conference on Intelligent Robots and
  Systems (IROS)}, pages 13392--13399. IEEE, 2022.

\bibitem{zheng2022mixtures}
C.~X. Zheng, A.~Colom{\'e}, L.~Sentis, and C.~Torras.
\newblock Mixtures of controlled {G}aussian processes for dynamical modeling of
  deformable objects.
\newblock In {\em Learning for Dynamics and Control Conference}, pages
  415--426. PMLR, 2022.

\end{thebibliography}
\newpage
\onecolumn
\section{Table of notations}
\begin{table*}[hb!]
	\caption{Summary of the main notations used in our article.}
	\label{table:notation}
	{
		\renewcommand{\arraystretch}{1.0}
		\begin{tabular}{ll}
			\toprule
			\textbf{Variable} & \textbf{Meaning} \\
			\midrule
			$\cH_1$ & RKHS associated to $k$ \\
			$\cH=\cH₁×\bR^{n_u}$ & product space for lifted state and control \\
			\midrule
			$\xvec \in\bR^d$, 
			$\uvec \in\bR^{n_u}$ 
			& state,
			control input\\
			$\uveco_i \in\bR^{n_u}$,$\tuveco_i \in\bR^{n_u},\ i\in\mathbb N$ 
			& control sequences for the optimal gains, defined in \eqref{eq:obj}, \eqref{eq:nobj} \\
			$\ldmi_i \in\bR^{d}, \ldmo_i \in\bR^{d},\ i=1, \dots, m$ 
			& Nyström input and output landmarks\\
			$z \in\Hspace_1$ & lifted state (infinite dimensional), see \eqref{eq:dynamics_compound} \\
			$\tilde z \in\Hspace_1$ & Nyström lifted state (infinite dimensional), see~\eqref{eq:nystrom_dynamics_compound}\\
			$\tildezvec \in\bR^{ m}$ & lifted state (finite dimensional, Nyström features), see~\eqref{eq:nystrom_dynamics_vector_partial} \\
			$\wvec=[\xvec^T,\uvec^T]^T\in \bR^{d+n_y}$ & augmented states (concatenation of state and control input) \\
			$\psi: \bR^d \rightarrow \cH_1$ & transformation for the state \\
			$\phi : \bR^{d+n_u} \rightarrow \cH_1 \times \bR^{n_u} $ & transformation for state and control input, see \eqref{e:def_phi} \\
			\midrule
			$S:\Hspace\rightarrow\R^n$ & sampling on the input with normalization $n^{-1/2}$, see \eqref{e:def_S} \\
			$\Zx:\cH_1\rightarrow\bR^n$ & sampling on the output with normalization $n^{-1/2}$, w/o the control input, see \eqref{e:def_Zx}\\
			$\Sx:\cH_1\rightarrow\bR^n$ & sampling on the input with normalization $n^{-1/2}$, w/o the control input, see \eqref{e:def_Sx} \\
			$\Su:\bR^{n_u}\rightarrow\bR^n$ & sampling on the control input with normalization $n^{-1/2}$, see \eqref{e:def_Su} \\
			\midrule
			$\Pin:\Hspace\rightarrow\Hspace$ &block diagonal projection for input space (state and control input)\\
			$\Pinx:\Hspace_1\rightarrow\Hspace_1$&orthogonal projection on input state\\
			$\Pout:\Hspace_1\rightarrow\Hspace_1$&orthogonal projection on output state, can be equal to $\Pinx$\\
			$\Stildeout:\Hspace_1\rightarrow \R^m$&sampling operator for Nyström centers (regression output), see \eqref{e:def_Stout}\\
			\midrule
			$\G=[\A,\B]:\cH → \cH₁$ & estimated (state,input)-state transition operator\\
			$\A :\Hspace_1\rightarrow\Hspace_1$ &estimated state-state transition operator, also appearing as $A$\\
			$\B :\R^{n_u}\rightarrow\Hspace_1$ &estimated input-state transition operator, also appearing as $B$\\
			$\nG=[\nA, \nB]:\cH → \cH₁$ & Nyström (state,input)-state transition operator\\
			$ \nA :\Hspace_1\rightarrow\Hspace_1$ &Nyström approximation of state-state transition operator, also appearing as $\Atilde$\\
			$ \nB :\R^{n_u}\rightarrow\Hspace_1$ &Nyström approximation input-state transition operator, also appearing as $\Btilde$\\
			$ P :\cH_1\rightarrow\Hspace_1, \Ptilde :\cH_1\rightarrow\Hspace_1$ &Riccati operator with exact and Nyströmized kernel, see~\eqref{e:def_P} and \eqref{e:def_nP}\\
			$\K :\cH_1\to\R^{n_u}, \Ktilde :\cH_1\to\R^{n_u}$ &LQR optimal gain with exact and Nyströmized kernel, see~\eqref{e:def_K} and \eqref{e:def_nK}\\
			\bottomrule
		\end{tabular}
	}
\end{table*}
\section{Proof of Lemma~\ref{lemma:convergence_p_operator}}
	Note that in our case the operators $Q$ and $R$ are the same both for the exact and Nyströmized dynamics, and are assumed to be positive definite. The structure of the proof is as follows. First, we write down a fixed-point operator equation, $X=\Phi X$, for a suitable operator $\Phi$, that has a unique solution, given by the error on the Riccati operator due to the Nyström approximation, $\Ptilde - P$. Then, we define a set of perturbations $X$ with bounded norm, and we show that $\Phi$ maps this set to itself, and is a contraction. This implies that the fixed point of $\Phi$, which is shown to be $\Ptilde-P$, lies in the set we defined, and has bounded norm. We begin by defining 
	\begin{align}
		N:\cH_1\to\cH_1,\ N &= B R^{-1}B^*, 
			\label{e:def_N} \\
		\Ntilde:\cH_1\to\cH_1,\ \Ntilde &= \Btilde R^{-1}\Btilde^*.
			\label{e:def_nN}
	\end{align}

\begin{proposition}
	\label{prop:inversion_perturbed_riccati}
	Given $P$ as the self-adjoint positive semi-definite solution of~\eqref{e:def_P}, let us consider the following set of self-adjoint perturbations 
	\begin{equation}
		\cS = \{X:\cH_1\to\cH_1, X = X^*, P + X \succcurlyeq 0\}.
	\end{equation} 
Let $X \in \cS$. Let  $L = A + BK$, where $K$ is defined in~\eqref{e:def_K}. Then, it holds
	\begin{equation}
	F(P + X, A, B) = \underbrace{X - L^*XL }_{\eqqcolon \mathcal T X}+\underbrace{ L^*X[I + N(P + X)]^{-1}NXL}_{\eqqcolon \mathcal R X}.\nonumber\label{eq:perturbed_riccati}
	\end{equation}
\end{proposition}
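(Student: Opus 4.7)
The plan is to expand $F(P+X, A, B)$ directly, exploit $F(P,A,B)=0$ to eliminate the constant $Q$, and then collapse the remaining terms using two algebraic identities relating $A$, $L$, and $P$. No perturbation theory or norm bounds are needed here: the statement is a purely algebraic rewriting, with the only subtlety being the invertibility of the operators $I+NP$ and $I+N(P+X)$ on $\cH_1$.

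The first step is to establish the closed-loop identity $L = (I+NP)^{-1}A$, or equivalently $(I+NP)L = A$. This follows by substituting $K = -(R+B^*PB)^{-1}B^*PA$ into $L = A+BK$ and using the push-through relation $B(R+B^*PB)^{-1}B^*(R+B^*PB)R^{-1} = N$ to show that $(I+NP)B(R+B^*PB)^{-1}B^*P = NP$. Taking adjoints and combining with the resolvent commutation $P(I+NP)^{-1} = (I+PN)^{-1}P$ yields the dual $A^*P(I+NP)^{-1} = L^*P$. Together with the reformulation $Q = P - A^*PL$ of $F(P,A,B)=0$, these two identities form the entire algebraic toolkit.

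The second step is the substitution itself. Writing $M \coloneqq [I+N(P+X)]^{-1}$ for readability, expanding $F(P+X,A,B)$ and eliminating $Q$ via the first identity produces $F(P+X,A,B) = X + A^*PL - A^*(P+X)MA$. The resolvent identity $(I+NP)^{-1} - M = (I+NP)^{-1}NXM$, applied to $L-MA$, together with the dual identity $A^*P(I+NP)^{-1} = L^*P$, transforms the middle pair of terms into $L^*PNXMA$. The essential cancellation is then $L^*PN - A^* = -L^*$, which comes from $A^* = L^*(I+PN)$ and collapses the whole expression to $F(P+X,A,B) = X - L^*XMA$.

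The third and final step is to rewrite $MA$ in terms of $L$. Since $M^{-1}L = (I+NP)L + NXL = A + NXL$, one has $MA = L - MNXL$, and substituting back yields exactly $X - L^*XL + L^*XMNXL = \mathcal T X + \mathcal R X$, as required. The main obstacle is merely bookkeeping, since the operators act on an infinite-dimensional space and are not all self-adjoint: one must verify that $(I+NP)^{-1}$ and $M$ actually exist. Since $P \succcurlyeq 0$, $P+X \succcurlyeq 0$ by definition of $\cS$, and $N = BR^{-1}B^* \succcurlyeq 0$, the products $NP$ and $N(P+X)$ are similar to self-adjoint positive semi-definite operators via an $N^{1/2}$-conjugation, so $I+NP$ and $M^{-1}$ have spectrum in $[1,\infty)$ and are boundedly invertible, making every manipulation above legitimate.
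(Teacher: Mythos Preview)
Your proof is correct and follows essentially the same algebraic route as the paper, which also uses $F(P,A,B)=0$ together with the closed-loop identity $L=(I+NP)^{-1}A$ and the commutation $P(I+NP)^{-1}=(I+PN)^{-1}P$; the only differences are cosmetic---you work from $F(P+X,A,B)$ towards $\mathcal T X+\mathcal R X$ whereas the paper starts from $\mathcal T X+\mathcal R X$ and simplifies forward, and you explicitly justify the invertibility of $I+NP$ and $I+N(P+X)$, which the paper uses without comment.
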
 
\begin{proof}
This statement can be found, in the matrix setting, in~\cite{mania2019certainty,konstantinov1993perturbation}. We report here the derivation for completeness. By definition of $P$ it holds $F(P,A,B)=0$ thus:
\begin{align}
	 &X - L^*XL + L^*X[I + N(P + X)]^{-1}NXL\nonumber \\
	 &= X - L^*XL + L^*X[I + N(P + X)]^{-1}NXL + F(P,A,B)\\
	 &= X - L^*XL + L^*X[I + N(P + X)]^{-1}NXL 
	 	+ P-A^*P(I+NP)^{-1}A -Q \\
	 &= (P+X)-Q + \brk*{ L^*X[I + N(P + X)]^{-1}NXL 
		 - L^*XL 
	 	 -A^*P(I+NP)^{-1}A } \\
	 &= (P+X)-Q + \brk*{ 
	 	 L^*X\prt*{[I + N(P+X)]^{-1}NX-I}L 
	 	 -A^*P(I+NP)^{-1}A 
	 	} \\
	 &= (P+X)-Q + \Big[
	 	 L^*X\prt*{[I + N(P+X)]^{-1}NX-[I + N(P+X)]^{-1}[I + N(P+X)]}L \\
	 	 &\quad -A^*P(I+NP)^{-1}A 
	 	\Big] \nonumber\\
	 &= (P+X)-Q - \brk*{ 
	 	 L^*X\prt*{[I + N(P+X)]^{-1}[I + NP]}L 
	 	 +A^*P(I+NP)^{-1}A 
	 	} \\
	 &= (P+X)-Q - A^*\brk*{ 
	 	 ((I+NP)^*)^{-1}X[I+N(P+X)]^{-1}
	 	 +P(I+NP)^{-1}}A 
	 	 \\
	 &= (P+X)-Q - A^*\brk*{ 
	 	 (I+PN)^{-1}X
	 	 +P(I+NP)^{-1}[I+N(P+X)]}[I+N(P+X)]^{-1}A 
	 	 \\
	 &= (P+X)-Q - A^*\brk*{ 
	 	 (I+PN)^{-1}X
	 	 +P(I+NP)^{-1}NX
	 	 +P
		 }[I+N(P+X)]^{-1}A 
	 	 \\
	 &= (P+X)-Q - A^*(P+X)(I+N(P+X))^{-1}A\\
	 &= F(P+X, A, B),
\end{align}
which concludes the proof.
\end{proof}

We can now study the operator $\mathcal T$ appearing in Proposition~\ref{prop:inversion_perturbed_riccati}.
\begin{proposition} 
	\label{prop:T_inv}
	Let $\CLHo$ denote the set of continuous linear operators from $\cH₁$ to itself.
	The operator $\mathcal T: \CLHo→\CLHo$ appearing in Proposition~\ref{prop:inversion_perturbed_riccati} is invertible. 
	Under assumptions~\ref{a:stabilizability} and~\ref{a:detectability}, the fixed point equation 
	\begin{equation}
	X = \mathcal T^{-1} [F(X + P, A, B) - F(X + P, \Atilde, \Btilde) - \mathcal R X] \nonumber
	\end{equation}
has a unique solution in $\cS$, namely $X = P-\Ptilde$.
\end{proposition}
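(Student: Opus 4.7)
The plan splits into two parts: establishing invertibility of $\mathcal{T}$ in $\mathcal{L}(\CLHo)$, and identifying $X_\star \coloneqq \Ptilde - P$ as the unique fixed point of $\Phi$ within $\cS$. For invertibility, I would write $\mathcal{T} = I - \mathcal{D}$ where $\mathcal{D} X \coloneqq L^* X L$ is the lifted operator acting on $\CLHo$, and show $\rho(\mathcal{D}) < 1$. Submultiplicativity gives $\|\mathcal{D}^k X\|_{\CLHo} \leq \|L^k\|^2 \|X\|_{\CLHo}$, hence $\|\mathcal{D}^k\|^{1/k} \leq \|L^k\|^{2/k}$; taking the limit and invoking Gelfand's formula on both sides yields $\rho(\mathcal{D}) \leq \rho(L)^2$. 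Under Assumptions~\ref{a:stabilizability} and~\ref{a:detectability}, the closed-loop operator $L = A + BK$ associated with the LQR solution satisfies $\rho(L) < 1$ by the standard infinite-horizon theory (see~\cite{hager1976convergence}), so $\rho(\mathcal{D}) < 1$ and the Neumann series $\sum_{k \geq 0} \mathcal{D}^k$ converges absolutely in operator norm to $\mathcal{T}^{-1}$.

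For the fixed-point assertion, I would verify existence and uniqueness separately. Existence: $X_\star$ is self-adjoint and satisfies $P + X_\star = \Ptilde \succcurlyeq 0$, so $X_\star \in \cS$; since $F(\Ptilde, \Atilde, \Btilde) = 0$ by~\eqref{e:def_nP}, the fixed-point equation at $X_\star$ reduces to $X_\star = \mathcal{T}^{-1}[F(\Ptilde, A, B) - \mathcal{R} X_\star]$, and Proposition~\ref{prop:inversion_perturbed_riccati} applied to the exact dynamics with perturbation $X_\star$ rewrites the bracket as $\mathcal{T} X_\star + \mathcal{R} X_\star - \mathcal{R} X_\star = \mathcal{T} X_\star$; cancellation then yields $\Phi X_\star = X_\star$. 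Uniqueness: for any $X \in \cS$ with $\Phi X = X$, substituting the identity $F(P + X, A, B) = \mathcal{T} X + \mathcal{R} X$ from Proposition~\ref{prop:inversion_perturbed_riccati} into the fixed-point equation and cancelling forces $F(P + X, \Atilde, \Btilde) = 0$ with $P + X \succcurlyeq 0$; the uniqueness of positive semi-definite solutions of the DARE under Assumptions~\ref{a:stabilizability} and~\ref{a:detectability} (see~\cite{hager1976convergence}) then yields $P + X = \Ptilde$, so $X = X_\star$.

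The main obstacle, in my view, is the spectral-radius step in the infinite-dimensional setting: bounding $\rho(\mathcal{D})$ by $\rho(L)^2$ uses Gelfand's formula on the operator space $\mathcal{L}(\CLHo)$, and the claim $\rho(L) < 1$ must be imported carefully from the LQR stabilization theory on $\cH_1$ rather than on $\R^d$. A secondary technical item to discharge is well-definedness of $\mathcal{R}$ on $\cS$, i.e., invertibility of $I + N(P + X)$ whenever $P + X \succcurlyeq 0$; this follows from $\sigma(N(P + X)) \setminus \{0\} = \sigma((P + X)^{1/2} N (P + X)^{1/2}) \setminus \{0\} \subseteq [0, \infty)$, which excludes $-1$ from the spectrum.
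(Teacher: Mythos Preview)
Your proposal is correct and follows essentially the same route as the paper: write $\mathcal{T}=I-\mathcal{D}$ with $\mathcal{D}X=L^*XL$, bound $\rho(\mathcal{D})\le\rho(L)^2<1$ via Gelfand's formula and the stabilizing property of $K$ from~\cite{hager1976convergence}, then reduce the fixed-point equation to $F(P+X,\Atilde,\Btilde)=0$ via Proposition~\ref{prop:inversion_perturbed_riccati} and invoke uniqueness of the positive semi-definite DARE solution. Your explicit split into existence and uniqueness, and your side remark on the invertibility of $I+N(P+X)$, are minor organizational additions but do not change the argument; note also that the fixed point is indeed $\Ptilde-P$ (as you write and as the paper's proof derives), despite the sign in the statement.
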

\begin{proof} The operator $\mathcal T$ can be rewritten as 
\begin{equation}
	\mathcal T X =( I - \mathcal D)X
\end{equation}
where $\mathcal D X = L^*XL:\CLHo\to\CLHo$ is linear ($\mathcal D (X + Y) = L^*XL + L^*YL,  \mathcal D (\alpha X) =\alpha L^*XL$).
Let $\cL(\CLHo)$ denote the set of continuous linear operators from $\CLHo$ to itself. 
Then $\mathcal T∊\cL(\CLHo)$ (because $A, B, Q, R$, and $K$, defined in~\eqref{e:def_K}, appearing in the expression of $L$ are bounded). 
We denote the spectrum of $\mathcal D$ as 
\begin{equation}
	\sigma(\mathcal D) = \{\lambda : \mathcal D - \lambda I \text{ is not  invertible in $\cL(\CLHo)$}\},
\end{equation}
and its spectral radius $\rho(\mathcal D) = \sup_{\lambda \in \sigma(\mathcal D)} |\lambda|$.
The operator $\mathcal T$ is invertible in $\cL(\CLHo)$ iff 1 is not in the spectrum $\sigma(\mathcal D)$, and we now show that this holds by proving $ρ(\cD)<1$.

We can observe that since $K$ in~\eqref{e:def_K} is a stabilizing gain by assumption, it holds~\cite[page 4]{hager1976convergence} $\rho(L) <1$. The spectral radius of $\cD $ can be computed with the Gelfand's formula,
using the fact that $L^k, (L^*)^k$ are bounded for any $k$:\\
\begin{align}
	\rho(\mathcal D) &= \lim_{k\to\infty}\norm{\mathcal D^k}^{1/k} \\
	&=  \lim_{k\to\infty}\sup_{\norm X = 1}\norm{\mathcal D^k X}^{1/k} \\
	&= \lim_{k\to\infty}\sup_{\norm X = 1}\norm{ {L^*}^k	XL^k}^{1/k} \\
	&\leq \lim_{k\to\infty}\norm{ {L^*}^k}^{1/k}\norm{L^k}^{1/k}\\
	&= \rho(L)^2 \\
	&< 1.
\end{align}
Now, we can consider the following equation:
\begin{equation}
	\label{eq:perturbed_riccati_with_nystrom}
	F(X + P, A, B) - F(X + P, \Atilde, \Btilde) = \mathcal T X + \mathcal R X.
\end{equation}
By Proposition~\ref{prop:inversion_perturbed_riccati} we have
\begin{equation}
	F(X + P, A, B )= \cT X + \cR X,
\end{equation}
and thus it must hold 
\begin{align}
F(X + P, \Atilde, \Btilde) &= 0.
	\label{e:prop_nP}
\end{align}
By definition of $\nP$ in \eqref{e:def_nP},
$X=\nP-P$ satisfies this equation, 
but it is also known \cite[Theorem 9]{hager1976convergence} that under Assumptions~\ref{a:stabilizability} and~\ref{a:detectability}, \eqref{e:prop_nP} admits a unique self-adjoint positive semi-definite solution. Moreover as both $P$ and $\nP$ are self-ajdoint, $X=\nP-P$ is self-adjoint as well and thus the unique solution of \eqref{e:prop_nP} in $\cS$.

Also, note that~\eqref{eq:perturbed_riccati_with_nystrom}, for a suitable operator $\Phi:\cS\to\CLHo$, can be rewritten as 
\begin{equation}
	X = \underbrace{\mathcal T^{-1} [F(X + P, A, B) - F(X + P, \Atilde, \Btilde) - \mathcal R X]}_{\eqqcolon \Phi X},
	\label{e:def_Phi}
\end{equation} 
so that $\nP-P$ is the unique fixed point of $Φ$.
\end{proof}

We have shown that the operator $\Phi$ is well defined and has a unique fixed point in $\cS$ equal to $\Ptilde - P$, and we will now show that the latter is upper bounded by $\epsilon$ in operator norm, where $\epsilon$ is the error on the matrices of the dynamical system, similarly to~\cite{mania2019certainty}. 
As mentioned at the beginning of this section, in the following, we will define a set $\cS_{\nu}$ of perturbations $X\in\cS$ with bounded norm. Furthermore, we will show that the function $\Phi$ is a mapping from $\cS_ν$ to itself, and that it is a contraction (i.e. $\alpha$-Lipschitz, $\alpha < 1$). Therefore, since $\cS_\nu$ is a closed subset of $\cS$, by the Banach fixed point theorem, $\Phi$ has a fixed point in $\cS_{\nu}$. Since the fixed point of $\Phi$ in $\cS$ has been proven to be $ \Ptilde - P$, this means that the error on the $P$ operator due to the Nyström approximation is bounded, and the bound depends on the error rate of the system's matrices. 
More formally, let us define:
\begin{equation}
	\mathcal S_{\nu} = \{X :\norm{X}\leq \nu, X = X^*, P + X \succcurlyeq 0\}.
\end{equation}
We can now prove some technical bounds that are used in~\cite{mania2019certainty}. We can define
\begin{align}
	\Delta A &\coloneqq \nA - A,\\
	\Delta B &\coloneqq \nB - B,\\
	\Delta N &\coloneqq \nN - N,\\
	\Delta P &\coloneqq \nP - P.
\end{align}
where we recall that $N,\nN$ are defined in \ref{e:def_N},~\ref{e:def_nN}.
\begin{proposition}
		\label{prop:bound_norm_delta_N}
	Assume $\norm{\Delta A}\leq \epsilon$,  and $\norm {\Delta B} \leq \epsilon$. Then, for $\epsilon\leq \norm{B}$, the following bound holds: 
	\begin{equation}
	\norm{\Delta N}\leq 3\epsilon \norm{R^{-1}}\norm{B}.\nonumber
	\end{equation}
\end{proposition}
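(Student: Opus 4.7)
The plan is to bound $\Delta N = \tilde B R^{-1}\tilde B^* - B R^{-1} B^*$ by a standard telescoping trick, namely by inserting the intermediate term $B R^{-1}\tilde B^*$ so that one factor of $B$ is perturbed at a time. This gives the identity
\begin{equation*}
	\Delta N = (\tilde B - B) R^{-1}\tilde B^* + B R^{-1}(\tilde B - B)^* = \Delta B\, R^{-1}\tilde B^* + B\, R^{-1}\,\Delta B^*,
\end{equation*}
which depends linearly on $\Delta B$ and is thus amenable to the triangle inequality in operator norm.

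Next, I would take the operator norm of both sides and use submultiplicativity and $\|M^*\|=\|M\|$ to obtain
\begin{equation*}
	\n{\Delta N}\le \n{\Delta B}\,\n{R^{-1}}\,\n{\tilde B} + \n{B}\,\n{R^{-1}}\,\n{\Delta B}.
\end{equation*}
The final ingredient is a cheap upper bound on $\n{\tilde B}$: writing $\tilde B = B + \Delta B$ and applying the triangle inequality gives $\n{\tilde B}\le \n{B}+\n{\Delta B}\le \n{B}+\epsilon$, and the assumption $\epsilon\le\n{B}$ then yields $\n{\tilde B}\le 2\n{B}$.

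Plugging everything together, and using $\n{\Delta B}\le \epsilon$, gives $\n{\Delta N}\le 2\epsilon\,\n{R^{-1}}\,\n{B}+\epsilon\,\n{R^{-1}}\,\n{B}=3\epsilon\,\n{R^{-1}}\,\n{B}$, as claimed. There is no real obstacle here: the only place where the hypothesis $\epsilon\le \n{B}$ is used is to turn the slightly awkward bound $\n{\tilde B}\le \n{B}+\epsilon$ into the clean constant $2\n{B}$, so that the final constant is a pure number ($3$) rather than a mild function of the perturbation size. The argument does not use Assumption~\ref{a:bounded_kernel}, stabilizability, or detectability; it is a purely algebraic perturbation estimate that will slot directly into the subsequent analysis of $\Phi$ and of the fixed-point set $\cS_\nu$.
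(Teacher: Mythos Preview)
Your proof is correct and essentially identical to the paper's: both use the same telescoping decomposition of $\Delta N$ (you insert $B R^{-1}\tilde B^*$, the paper inserts $\tilde B R^{-1}B^*$, which is immaterial), arrive at the bound $\epsilon\,\n{R^{-1}}(\n{B}+\n{\tilde B})$, and then use $\epsilon\le\n{B}$ to get $\n{\tilde B}\le 2\n{B}$.
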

\begin{proof}We can observe that: 
\begin{align}
	\norm{\Delta N}& = \norm{BR^{-1} B^* - \Btilde R^{-1} \Btilde^*}\\
	&=  \norm{BR^{-1} B^* - \Btilde R^{-1} B^* + \Btilde R^{-1} B^*-\Btilde R^{-1} \Btilde^*}\\
	&= \norm{(B - \Btilde)R^{-1} B^*  + \Btilde R^{-1} (B^*-\Btilde^*)}\\
	&\leq \norm{(B - \Btilde)R^{-1} B^*}  + \norm{\Btilde R^{-1} (B^*-\Btilde^*)}\\
	&\leq \norm{B- \Btilde}\norm{R^{-1}}(\norm{B} + \norm{\Btilde})\\
	&\leq \epsilon \norm{R^{-1}}(\norm{B} + \norm{\Btilde}).
\end{align}
We can observe that, by choosing $\epsilon\leq \norm B$, by the reversed triangle inequality:
\begin{align}
	\norm{\Btilde} - \norm{B}
		& \leq \norm{\Btilde - B}
		 \leq \epsilon
		 \leq \norm{B},
\end{align}
meaning that 
\begin{equation}
	\norm{\Delta N}\leq 3\epsilon \norm{R^{-1}}\norm{B},
\end{equation}
as also shown in~\cite{mania2019certainty}.
\end{proof}
\begin{proposition}\label{prop:norm_T_inv}
	Let $L$, $\cT$ be as defined in Proposition~\ref{prop:inversion_perturbed_riccati}. Moreover, let
	\begin{equation}
		\tau(L, \rho)=\sup\{\norm {L^k}\rho^{-k} : k≥ 0\}.\nonumber
	\end{equation}
	By assumption $K$ stabilizes the system, \ie $ρ(L)<1$.
	For any $\rho(L)\leq \rho<1$ we have that
	\begin{equation}
		\norm{\cTinv} \leq \frac{ \tau(L, \rho)^2}{1-\rho^2}.\nonumber
	\end{equation}
\end{proposition}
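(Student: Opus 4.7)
The plan is to invert $\mathcal{T} = I - \mathcal{D}$, where $\mathcal{D}X = L^{*}XL$, via a Neumann series. From the proof of Proposition~\ref{prop:T_inv} we already know that $\rho(\mathcal{D}) \leq \rho(L)^{2} < 1$, so the series $\sum_{k=0}^{\infty} \mathcal{D}^{k}$ is a natural candidate for $\mathcal{T}^{-1}$. The first step is therefore to justify that this series converges absolutely in $\mathcal{L}(\mathrm{CL}(\cH_1))$ and coincides with $\mathcal{T}^{-1}$; this is standard once we have a summable pointwise bound on $\|\mathcal{D}^{k} X\|$.

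The key computation is to iterate the action of $\mathcal{D}$: by induction one has $\mathcal{D}^{k} X = (L^{*})^{k} X L^{k}$ for every $k\geq 0$. Submultiplicativity then gives $\|\mathcal{D}^{k} X\| \leq \|L^{k}\|^{2}\,\|X\|$, and the definition of $\tau(L,\rho)$ provides the uniform control $\|L^{k}\| \leq \tau(L,\rho)\,\rho^{k}$. Combining these yields
\begin{equation*}
\|\mathcal{D}^{k} X\| \leq \tau(L,\rho)^{2}\,\rho^{2k}\,\|X\|.
\end{equation*}

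Summing the geometric series with ratio $\rho^{2}<1$, one obtains
\begin{equation*}
\|\mathcal{T}^{-1} X\| = \Big\|\sum_{k=0}^{\infty} \mathcal{D}^{k} X\Big\| \leq \tau(L,\rho)^{2}\,\|X\|\sum_{k=0}^{\infty} \rho^{2k} = \frac{\tau(L,\rho)^{2}}{1-\rho^{2}}\,\|X\|,
\end{equation*}
which is exactly the bound on $\|\mathcal{T}^{-1}\|$ claimed in the statement. There is no real obstacle here beyond bookkeeping: the main care needed is to verify that the Neumann representation $\mathcal{T}^{-1} = \sum_{k} \mathcal{D}^{k}$ holds in the Banach algebra $\mathcal{L}(\mathrm{CL}(\cH_1))$, which is immediate since the bound above makes the partial sums Cauchy in operator norm. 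The rest is a routine application of the definition of $\tau(L,\rho)$, already used implicitly in Mania et al.~\cite{mania2019certainty} for the matrix case.
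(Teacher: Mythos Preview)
Your proposal is correct and follows essentially the same route as the paper: write $\mathcal{T}=I-\mathcal{D}$ with $\mathcal{D}X=L^{*}XL$, iterate to get $\mathcal{D}^{k}X=(L^{*})^{k}XL^{k}$, bound $\|\mathcal{D}^{k}\|\leq\|L^{k}\|^{2}\leq\tau(L,\rho)^{2}\rho^{2k}$, and sum the resulting geometric series to obtain the Neumann representation and the claimed bound. The paper adds a short paragraph arguing that $\tau(L,\rho)$ is finite when $\rho>\rho(L)$, but this is not needed for the inequality itself (if $\tau(L,\rho)=\infty$ the bound is vacuous), so your argument is complete as stated.
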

\begin{proof}
	By definition we have that operator $\cT$ can be expressed as $\cT= I - \cD$,
where $\cD X = L^*XL$.
Note that $L$ is bounded because $A,B$ and $K$ are bounded,
and thus $\cD:\CLHo→\CLHo$ is also bounded as
\begin{align}
	\norm{\cD} &= \sup_{\norm X = 1}\norm{\cD X}
	= \sup_{\norm X = 1}\norm{L^* XL}
	\leq \norm L^2.
\end{align}
Now, observe that, since the spectral radius of $L$ is strictly smaller than 1 (stability), we can choose $\rho(L)< \rho<1$ so that the quantity $\tau(L, \rho)$ is finite~\cite{mania2019certainty}.
Indeed according to Gelfand's formula, we are guaranteed that there exists $k_0$ such that for any $k\geq k_0$, $\norm{L^k}\leq \rho^k$.
Hence, we can choose 
\begin{equation}
	\tau'(L, \rho) =\max \cb{1} \cup \cb{ \norm{L^k}/\rho^k : k∊\irange{0,…,k_0}}
\end{equation}
which involves a finite sequence of values, so that for any $k∊\bN$ it holds $\n{L^k}≤\tau'(L, \rho) ρ^k$, and $τ(L,ρ)≤τ'(L,ρ)<∞$.
The Neumann series $\sum_{k=0}^{\infty} \cD^k$ is (absolutely) convergent in operator norm, as
\begin{align}
	\sum_{k=0}^{\infty}\norm{\cD^k} 
	&=\sum_{k=0}^{\infty} \sup_{\norm X=1}\norm{{L^*}^k X L^k}\\
	&\leq \sum_{k=0}^{\infty} \norm{L^k}^2\\
	&\leq \sum_{k=0}^{\infty}\left[\tau(L, \rho)\rho^k\right]^2\\
	&=\frac{ \tau(L, \rho)^2}{1-\rho^2},
\end{align}
since we have a geometric series with $\rho^2 < 1$.
Therefore, we can express $\cTinv$ using the Neumann series as
\begin{align}
	\cTinv &= (I - \cD)^{-1}
	= \sum_{k=0}^{\infty}\cD^k.
\end{align} 
This yields the claimed result by the triangle inequality.
\end{proof}
\begin{proposition}
	\label{prop:norm_r_x}.
	Let $\cR$, $L$, $N$, $X$ be the operators defined in Proposition~\ref{prop:inversion_perturbed_riccati}. Then, we have
	\begin{equation}
	\norm{\cR X}\leq \norm{L}^2\norm{X}^2\norm{N}.\nonumber
	\end{equation}
\end{proposition}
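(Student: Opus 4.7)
The plan is to bound $\n{\cR X}$ by naive submultiplicativity, except for one factor $[I + N(P+X)]^{-1}N$ which I will bound separately using the positivity structure of $N$ and $P+X$.

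First I would apply submultiplicativity directly to the defining expression $\cR X = L^*X[I + N(P + X)]^{-1}NXL$ to get
\begin{equation*}
	\n{\cR X} \leq \n{L}^2 \n{X}^2 \cdot \n{[I + N(P+X)]^{-1} N}.
\end{equation*}
It thus suffices to establish $\n{[I + N(P+X)]^{-1} N} \leq \n{N}$.

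The key observation is that $N = BR^{-1}B^*$ is self-adjoint and positive semi-definite (since $R \succ 0$ implies $R^{-1} \succ 0$), so it admits a self-adjoint positive semi-definite square root $N^{1/2}$ satisfying $N = N^{1/2}N^{1/2}$ and $\n{N^{1/2}}^2 = \n{N}$. I then apply the push-through identity $(I + UV)^{-1} U = U (I + VU)^{-1}$ with $U = N^{1/2}$ and $V = N^{1/2}(P+X)$, which gives
\begin{equation*}
	[I + N(P+X)]^{-1} N = N^{1/2}\bigl[I + N^{1/2}(P+X)N^{1/2}\bigr]^{-1} N^{1/2}.
\end{equation*}
Since $X \in \cS$ implies $P + X \succcurlyeq 0$, the sandwiched operator $N^{1/2}(P+X) N^{1/2}$ is self-adjoint and positive semi-definite, so $I + N^{1/2}(P+X)N^{1/2} \succcurlyeq I$, and consequently $\n{[I + N^{1/2}(P+X)N^{1/2}]^{-1}} \leq 1$.

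Combining the two bounds gives $\n{[I + N(P+X)]^{-1} N} \leq \n{N^{1/2}}^2 = \n{N}$, and hence the claim. The only real subtlety is justifying the push-through identity and checking that $N^{1/2}$ exists in the infinite-dimensional operator setting; both follow from $N$ being self-adjoint and bounded, so in fact there is no substantive obstacle — the computation is essentially a careful rearrangement that exploits the positivity of both $N$ and $P+X$.
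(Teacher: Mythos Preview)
Your proof is correct and reaches the same key inequality $\n{[I+N(P+X)]^{-1}N}\leq\n{N}$ as the paper, but via a genuinely different route. The paper regularizes: it replaces $N$ and $P+X$ by $N+\alpha I$ and $P+X+\alpha I$, uses the algebraic identity
\[
(N+\alpha I)[I+(P+X+\alpha I)(N+\alpha I)]^{-1}=[(N+\alpha I)^{-1}+P+X+\alpha I]^{-1},
\]
reads off the operator inequality $0\preccurlyeq[(N+\alpha I)^{-1}+P+X+\alpha I]^{-1}\preccurlyeq N+\alpha I$, and then passes to the limit $\alpha\to0$. You instead take the square root $N^{1/2}$ and use the push-through identity to symmetrize the middle factor directly as $N^{1/2}[I+N^{1/2}(P+X)N^{1/2}]^{-1}N^{1/2}$, from which the bound is immediate. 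Your argument is cleaner in that it avoids the limiting step and the continuity justification, at the small cost of invoking the existence of $N^{1/2}$ and the push-through identity in the operator setting (both standard, as you note). The paper's regularization has the mild advantage that it never needs the square root and works entirely with algebraic inversions, but otherwise the two arguments are equivalent in strength and rigor.
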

\begin{proof} The first part of the proof is the same as the one of Lemma 7 in~\cite{mania2019certainty}. Note that $N$ and $P+X$ are self-adjoint, positive semi-definite operators. 
		Let us now consider $\alpha >0$. We have that
		\begin{align}
			0 \preccurlyeq 
			(N+\alpha I)[I + (P + X+\alpha I)(N+\alpha I)]^{-1}&=[(N+\alpha I)^{-1} + P + X+\alpha I]^{-1}\\
			&\preccurlyeq(N+\alpha I).
		\end{align}		
	This implies the following norm inequality:
	\begin{equation}
		\norm{(N+\alpha I)[I + (P + X+\alpha I)(N+\alpha I)]^{-1}}\leq\norm{N+\alpha I}.
	\end{equation}		
By definition, since limits preserve inequalities and by continuity of the norm, we have that
\begin{align}
	\norm{N[I + (P + X)N]^{-1}}
	&=\norm{\lim_{\alpha\to0} (N+\alpha I)[I + (P + X+\alpha I)(N+\alpha I)]^{-1}} \\
	&=\lim_{\alpha\to0}\norm{(N+\alpha I)[I + (P + X+\alpha I)(N+\alpha I)]^{-1}}\\
	&\leq \lim_{\alpha\to0}\norm{N+\alpha I}\\
	&=\norm{N}.
\end{align}

 Hence,
\begin{align}
	\norm{\cR X}&=\norm{L^*X[I + N(P + X)]^{-1}NXL} \\
	&= \norm{L^*XN[I + (P + X)N]^{-1}XL}\\
	&\leq \norm{L}^2\norm{X}^2\norm{N}.
\end{align}
\end{proof}
\begin{proposition}
	With the same notations and assumptions of Proposition~\ref{prop:inversion_perturbed_riccati}, for $X\in \cS_ν$, we have that 
\begin{equation}
\norm{F(P+X, \Atilde, \Btilde) - F(P+X,A,B)}\leq \norm{A}^2\norm{P+X}^2\norm{\Delta N}+2\norm{A}\norm{P+X}\epsilon + \norm{P+X}\epsilon^2.\nonumber
\end{equation}
\end{proposition}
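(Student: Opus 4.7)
The plan is to rewrite the two instances of $F(P+X,\cdot,\cdot)$ in a form that cleanly isolates the dependence on $N,\Ntilde$, and then to reduce the bound to controlling two operator-theoretic quantities: a resolvent-like operator $M \coloneqq (P+X)(I+N(P+X))^{-1}$ together with its Nyström counterpart $\tilde M$, and their difference $M-\tilde M$. Writing $Y\coloneqq P+X$ for brevity, this yields $F(Y,A,B) = Y - A^* M A - Q$ and an analogous expression with tildes. Subtracting and telescoping through $A^*\tilde M A$ gives
\[
F(P+X, \Atilde, \Btilde) - F(P+X, A, B) = A^*(M - \tilde M) A - (\Delta A)^* \tilde M \Atilde - A^* \tilde M \Delta A,
\]
so the rest of the argument reduces to bounding the three resulting operator norms.

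The first key estimate is $\|M\|\leq \|Y\|$, which I would prove in the spirit of Proposition~\ref{prop:norm_r_x}. For $\alpha>0$ the algebraic identity $(Y+\alpha I)(I+N(Y+\alpha I))^{-1} = ((Y+\alpha I)^{-1} + N)^{-1}$ shows that the approximated operator $M_\alpha$ is self-adjoint and satisfies $0\preceq M_\alpha \preceq Y+\alpha I$, since $N\succeq 0$ forces $M_\alpha^{-1}=(Y+\alpha I)^{-1}+N\succeq (Y+\alpha I)^{-1}$; letting $\alpha\to 0$ in operator norm gives $\|M\|\leq \|Y\|$, and the same chain applied to $\Ntilde$ yields $\|\tilde M\|\leq \|Y\|$. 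The second key estimate is the first-resolvent identity
\[
M - \tilde M = Y\bigl[(I+NY)^{-1} - (I+\Ntilde Y)^{-1}\bigr] = Y(I+NY)^{-1}(\Ntilde - N)Y(I+\Ntilde Y)^{-1} = M\,\Delta N\,\tilde M,
\]
which, combined with the previous bound, yields $\|M-\tilde M\|\leq \|Y\|^2\|\Delta N\|$.

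Finally, assembling the three terms via the triangle inequality and using $\|\Atilde\|\leq \|A\|+\epsilon$, the first contributes at most $\|A\|^2\|Y\|^2\|\Delta N\|$, while the two $\Delta A$ pieces together contribute at most $\|Y\|\bigl(\epsilon(\|A\|+\epsilon)+\|A\|\epsilon\bigr) = \|Y\|(2\|A\|\epsilon+\epsilon^2)$, which delivers the claimed inequality. I expect the only mildly delicate step to be the Löwner-order manipulation that yields $\|M\|\leq \|Y\|$ when $Y=P+X$ may be singular, since this forces one to pass through the regularized $Y+\alpha I$ and justify a limit in operator norm; everything else is algebraic telescoping and the standard first-resolvent identity.
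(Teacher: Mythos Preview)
Your proof is correct and follows essentially the same route as the paper: both decompose $A^*MA-\Atilde^*\tilde M\Atilde$ by telescoping (the paper into four terms, you into three by grouping $(\Delta A)^*\tilde M A+(\Delta A)^*\tilde M\Delta A=(\Delta A)^*\tilde M\Atilde$), use the resolvent identity to get $M-\tilde M=M\,\Delta N\,\tilde M$, and bound $\|M\|,\|\tilde M\|\leq\|P+X\|$ via the L\"owner-order/regularization argument (which the paper invokes as \cite[Lemma~7]{mania2019certainty} and spells out in Proposition~\ref{prop:norm_r_x}). The limit $\alpha\to 0$ you flag as delicate is exactly the step the paper handles in Proposition~\ref{prop:norm_r_x}.
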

\begin{proof}
Let $P_X$ be a shorthand for $P + X$.
We can consider the following decomposition~\cite{mania2019certainty}:
\begin{align}
	F(P_X, \Atilde, \Btilde) - F(P_X,A,B) =& A^*\PX(I + N\PX)^{-1}A- \Atilde^*\PX(I + \tilde N\PX)^{-1}\Atilde\\
	=& A^*\PX(I + N\PX)^{-1}A -  A^*\PX(I + \Ntilde\PX)^{-1}A\nonumber\\
		&+  A^*\PX(I + \Ntilde\PX)^{-1}A -  A^*\PX(I + \Ntilde\PX)^{-1}\Atilde\nonumber\\
		&+  A^*\PX(I + \Ntilde\PX)^{-1}A -  \Atilde^*\PX(I + \Ntilde\PX)^{-1}A\nonumber\\
		&+  A^*\PX(I + \Ntilde\PX)^{-1}\Atilde  +\Atilde^*\PX(I + \Ntilde\PX)^{-1}A\nonumber \\
		&-A^*\PX(I + \Ntilde\PX)^{-1}A  -\Atilde^*\PX(I + \Ntilde\PX)^{-1}\Atilde\\
	=& A^*\PX\left[(I + N\PX)^{-1} -  (I + \Ntilde\PX)^{-1}\right]A\nonumber\\
		&-  A^*\PX(I + \Ntilde\PX)^{-1}\Delta A\nonumber\\
		&-  \Delta A^*\PX(I + \Ntilde\PX)^{-1}A\nonumber\\
		&-  \Delta A^*\PX(I + \Ntilde\PX)^{-1}\Delta A\\
	=&A^*\PX(I + N\PX)^{-1} \Delta N \PX  (I + \Ntilde\PX)^{-1}A\nonumber\\
		&-  A^*\PX(I + \Ntilde\PX)^{-1}\Delta A\nonumber\\
		&-  \Delta A^*\PX(I + \Ntilde\PX)^{-1}A\nonumber\\
		&-  \Delta A^*\PX(I + \Ntilde\PX)^{-1}\Delta A.
\end{align}
By~\cite[Lemma 7]{mania2019certainty}, we get
\begin{align}
\norm{F(P_X, \Atilde, \Btilde) - F(P_X,A,B)}&\leq \norm{A}^2\norm{\PX}^2\norm{\Delta N}+2\norm{A}\norm{\PX}\epsilon + \norm{\PX}\epsilon^2.
\end{align}
\end{proof}

With the technical results introduced above, we are now ready to show that $\Phi$ maps $\mathcal S_{\nu}$ to itself, and is a contraction. We can firstly compute suitable upper bounds, merging the results above.
\begin{proposition}
\label{prop:bound_tinv_X}
	With the same notations and assumptions of Proposition~\ref{prop:inversion_perturbed_riccati}, for $X\in\cS_ν$, $\nu\leq 1/2$, $\epsilon\leq \norm{B}$, and $\rho(L) \leq\rho < 1$, it holds that 
	\begin{equation}
	\norm{\Phi X} \leq  \frac{ \tau(L, \rho)^2}{1-\rho^2}\left[\norm{L}^2\norm{N}\nu^2 +3\epsilon(\norm{P} + 1)^2(\norm{A} + 1)^2(\norm{R^{-1}} + 1)(\norm{B} + 1) \right].\nonumber
	\end{equation}
\end{proposition}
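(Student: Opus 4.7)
My plan is to bound $\norm{\Phi X}$ by submultiplicativity and then apply, in order, each of the technical propositions already established in this appendix (Propositions~\ref{prop:inversion_perturbed_riccati}--\ref{prop:norm_r_x}). From the definition of $\Phi$ in~\eqref{e:def_Phi} and the triangle inequality I get
\begin{equation*}
  \norm{\Phi X} \leq \norm{\mathcal T^{-1}}\cdot\bigl(\norm{F(P+X,A,B)-F(P+X,\tilde A,\tilde B)} + \norm{\mathcal R X}\bigr),
\end{equation*}
so the whole task reduces to plugging in the three existing bounds on these factors and tidying up the constants into the product structure claimed in the statement.

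The first factor is handled by Proposition~\ref{prop:norm_T_inv}, giving $\norm{\mathcal T^{-1}}\leq \tau(L,\rho)^2/(1-\rho^2)$, which is already the common prefactor in the final bound. For the $\mathcal R$-term, Proposition~\ref{prop:norm_r_x} yields $\norm{\mathcal R X}\leq \norm{L}^2\norm{X}^2\norm{N}$; since $X\in\cS_\nu$, $\norm{X}\leq \nu$, producing the $\norm{L}^2\norm{N}\nu^2$ summand. For the $F$-difference, I would invoke the previous proposition to obtain
\begin{equation*}
  \norm{F(P+X,A,B)-F(P+X,\tilde A,\tilde B)} \leq \norm{A}^2\norm{P+X}^2\norm{\Delta N} + 2\norm{A}\norm{P+X}\epsilon + \norm{P+X}\epsilon^2,
\end{equation*}
and then apply Proposition~\ref{prop:bound_norm_delta_N} to replace $\norm{\Delta N}$ by $3\epsilon\norm{R^{-1}}\norm{B}$ (which is legitimate because $\epsilon\leq \norm{B}$ is assumed).

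The remaining step is purely algebraic simplification, and this is the only delicate part of the argument. The bound on $\norm{P+X}$ comes from $\norm{X}\leq \nu\leq 1/2\leq 1$, giving $\norm{P+X}\leq \norm{P}+1$; the cubic $\epsilon^2$ term is tamed via $\epsilon^2\leq \epsilon\norm{B}$. After factoring out $\epsilon(\norm{P}+1)^2$ from all three summands, one is left with the scalar inequality
\begin{equation*}
  3\norm{A}^2\norm{R^{-1}}\norm{B} + 2\norm{A} + \norm{B} \leq 3(\norm{A}+1)^2(\norm{R^{-1}}+1)(\norm{B}+1),
\end{equation*}
which expands to a polynomial identity verified termwise (the right-hand side contains, after expansion, the monomials $3\norm{A}^2\norm{R^{-1}}\norm{B}$, $6\norm{A}$, and $3\norm{B}$ among others, all with nonnegative remaining terms). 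The main obstacle is just this bookkeeping and packaging: making sure the three heterogeneous contributions (cubic in $\epsilon$, linear in $\epsilon$, and quadratic in $\nu$) combine into the clean product form claimed, and that every auxiliary inequality uses only the three standing hypotheses $X\in\cS_\nu$, $\nu\leq 1/2$, and $\epsilon\leq\norm{B}$. Collecting everything and multiplying through by the bound on $\norm{\mathcal T^{-1}}$ then yields the claim.
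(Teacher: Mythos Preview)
Your proposal is correct and follows essentially the same approach as the paper: both start from $\norm{\Phi X}\leq\norm{\mathcal T^{-1}}(\norm{F(P+X,A,B)-F(P+X,\tilde A,\tilde B)}+\norm{\mathcal R X})$, apply Propositions~\ref{prop:norm_T_inv}, \ref{prop:norm_r_x}, \ref{prop:bound_norm_delta_N} and the $F$-difference bound, then use $\norm{P+X}\leq\norm{P}+1$ and $\epsilon\leq\norm{B}$ before verifying the same polynomial majorization. The only imprecision is your phrase ``factoring out $\epsilon(\norm{P}+1)^2$'': two of the three summands carry only $(\norm{P}+1)$ to the first power, so you must first bound $(\norm{P}+1)\leq(\norm{P}+1)^2$ (trivial since $\norm{P}\geq 0$) before factoring---but this is exactly what the paper's expansion argument also does implicitly.
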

\begin{proof}
We already know from Proposition~\ref{prop:bound_norm_delta_N} that $\norm{\Delta N}\leq 3\epsilon \norm{R^{-1}}\norm{B}$. Moreover, $\norm{\PX}\leq\norm{P} + \nu ≤ \n{P}+1$ by the triangle inequality and given that $X∊\cS_ν$ and $ν≤1/2$. According to Proposition~\ref{prop:T_inv},~\ref{prop:norm_T_inv},~\ref{prop:norm_r_x} and using $\epsilon\leq \norm{B}$:
\begin{align}
	\norm{\Phi X} &= \norm{\mathcal T^{-1} [F(X + P, A, B) - F(X + P, \Atilde, \Btilde) - \mathcal R X]}\\
	&\leq \norm{\mathcal T^{-1}}\left[\norm{F(X + P, A, B) - F(X + P, \Atilde, \Btilde)} + \norm{ \mathcal R X}\right]\\
	&\leq \frac{ \tau(L, \rho)^2}{1-\rho^2}\left[\norm{L}^2\norm{N}\nu^2 +\norm{A}^2\norm{\PX}^23\epsilon \norm{R^{-1}}\norm{B}+2\norm{A}\norm{\PX}\epsilon + \norm{\PX}\epsilon^2 \right]\\
	&\leq  \frac{ \tau(L, \rho)^2}{1-\rho^2}\left[\norm{L}^2\norm{N}\nu^2 +\epsilon\norm{\PX}\left(3\norm{A}^2\norm{\PX} \norm{R^{-1}}\norm{B}+2\norm{A} +\epsilon\right) \right]\\
	&\leq  \frac{ \tau(L, \rho)^2}{1-\rho^2}\left[\norm{L}^2\norm{N}\nu^2 +\epsilon\norm{\PX}\left(3\norm{A}^2\norm{\PX} \norm{R^{-1}}\norm{B}+2\norm{A} +\norm{B}\right) \right].
\end{align}
We use a rough upper-bounded on the second term of this last expression to get claimed result, which is more convenient to work with:
\begin{align}
	3\epsilon(\norm{P} + 1)&^2(\norm{A} + 1)^2(\norm{R^{-1}} + 1)(\norm{B} + 1)\\
	& =3\epsilon(\norm{P} + 1)^2(\norm{A} + 1)^2(\norm{R^{-1}}\norm{B} +\norm{R^{-1}} + \norm{B} + 1)\\
	&=3\epsilon(\norm{P} + 1)^2(\norm{A}^2 + 2\norm{A} + 1)(\norm{R^{-1}}\norm{B} +\norm{R^{-1}} + \norm{B} + 1)\\
	&=3\epsilon(\norm{P}^2 + 2\norm{P} + 1)(\norm{A}^2 + 2\norm{A} + 1)(\norm{R^{-1}}\norm{B} +\norm{R^{-1}} + \norm{B} + 1)\\
	&\geq 3\epsilon\norm{A}^2(\norm{P}+1)^2\norm{R^{-1}}\norm{B} + 12\norm{A}(\norm{P} + 1)\epsilon + 6(\norm{P} + 1)\norm{B}\epsilon)\\
	&\geq \norm{A}^2\norm{\PX}^23\epsilon \norm{R^{-1}}\norm{B}+2\norm{A}\norm{\PX}\epsilon + \norm{\PX}\epsilon^2.
\end{align}	
\end{proof}

\begin{proposition}
	\label{prop:bound_lipschitz_invt}
	With the same assumptions and notations of Proposition~\ref{prop:inversion_perturbed_riccati}, if $X_1,X_2\in\mathcal S_{\nu}$, $\nu\leq \min\{1/2, \norm{N}^{-1}\}$, $‖ΔA‖≤ε$, $‖ΔB‖≤ε$, $\epsilon \leq \norm{B}$, and $\rho(L) \leq\rho < 1$, it holds that
\begin{align}
	\norm{\Phi X_1 - \Phi X_2} \leq 32 \frac{\tau(L, \rho)^2}{1 - \rho^2}[&(\norm{A} + 1)^2(\norm{P} + 1)^3( \norm{B} + 1)^3(\norm R^{-1} + 1)^2 \norm{X_1 - X_2}\epsilon\nonumber \\
	&+ \norm{L}^2\nu\norm{N}\norm{X_1 - X_2}].\nonumber
\end{align}
\end{proposition}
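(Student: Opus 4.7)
My plan is to leverage Proposition~\ref{prop:inversion_perturbed_riccati} to rewrite $\Phi$ so that the leading linear term cancels after subtraction. Since that proposition gives $F(X+P, A, B) = \mathcal{T}X + \mathcal{R}X$, one has $\Phi X = X - \mathcal{T}^{-1}F(X+P, \tilde A, \tilde B)$. Setting $G(X) \coloneqq F(X+P, \tilde A, \tilde B) - F(X+P, A, B)$ and expanding $F(X+P, \tilde A, \tilde B) = \mathcal{T}X + \mathcal{R}X + G(X)$, the ``bare'' $X_1 - X_2$ piece cancels against $\mathcal{T}^{-1}\mathcal{T}(X_1-X_2)$, yielding
\begin{equation*}
\Phi X_1 - \Phi X_2 = -\mathcal{T}^{-1}(\mathcal{R}X_1 - \mathcal{R}X_2) - \mathcal{T}^{-1}\bigl(G(X_1) - G(X_2)\bigr).
\end{equation*}
Applying the bound $\|\mathcal{T}^{-1}\| \leq \tau(L,\rho)^2/(1-\rho^2)$ from Proposition~\ref{prop:norm_T_inv} then reduces the task to two operator-norm estimates.

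The first, $\|\mathcal{R}X_1 - \mathcal{R}X_2\|$, I handle by telescoping the three $X$-dependent factors of $L^*X[I+N(P+X)]^{-1}NXL$ one at a time. The uniform bound $\|[I+N(P+X_i)]^{-1}N\| \leq \|N\|$ already established in the proof of Proposition~\ref{prop:norm_r_x}, together with the contraction $\|[I+N(P+X_i)]^{-1}\| \leq 1$ (since $N$ and $P+X_i$ are positive semi-definite), and the hypotheses $\|X_i\| \leq \nu \leq \|N\|^{-1}$, make each of the three pieces bounded by $\|L\|^2 \nu \|N\| \|X_1 - X_2\|$, producing the second summand of the target bound.

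The main obstacle is controlling $\|G(X_1) - G(X_2)\|$, which is a mixed second-order finite difference of the resolvent-type bilinear form ${A'}^* Y(I + N' Y)^{-1}A'$. Writing $Y_i = X_i + P$, one has $G(X_1) - G(X_2) = D(A, N) - D(\tilde A, \tilde N)$ with
\begin{equation*}
D(A', N') \coloneqq {A'}^*\bigl[Y_1(I + N' Y_1)^{-1} - Y_2(I + N' Y_2)^{-1}\bigr]A'.
\end{equation*}
A standard resolvent identity yields
\begin{equation*}
Y_1(I+N'Y_1)^{-1} - Y_2(I+N'Y_2)^{-1} = (X_1 - X_2)(I + N' Y_1)^{-1} - Y_2(I+N'Y_1)^{-1} N' (X_1 - X_2)(I+N'Y_2)^{-1},
\end{equation*}
making the dependence of $D$ on $X_1 - X_2$ explicit and linear. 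I then telescope $D(A,N) \to D(\tilde A, N) \to D(\tilde A, \tilde N)$, replacing one occurrence of $A$ or $N$ at a time; each such swap extracts a factor of $\epsilon$ via $\|\tilde A - A\| \leq \epsilon$ or, by Proposition~\ref{prop:bound_norm_delta_N}, $\|\tilde N - N\| \leq 3\epsilon\|R^{-1}\|\|B\|$, while the $X_1 - X_2$ factor is preserved.

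Combining both inner bounds with $\|\mathcal{T}^{-1}\|$ and absorbing the resulting operator norms of $A, B, P, R^{-1}$ into a rough $(\|A\|+1)^2(\|P\|+1)^3(\|B\|+1)^3(\|R^{-1}\|+1)^2$-type factor exactly as done in Proposition~\ref{prop:bound_tinv_X} yields the stated inequality. The delicate point is purely algebraic---carrying out the two telescopings so that exactly one $X_1 - X_2$ factor survives in each resulting term, keeping $\epsilon$ and $\nu$ linear rather than quadratic; the hypotheses $\epsilon \leq \|B\|$, $\nu \leq 1/2$, and $\nu \leq \|N\|^{-1}$ are used precisely to bound $\|\tilde A\|, \|\tilde B\|, \|Y_i\|$, and the resolvent norms by constants times their nominal counterparts.
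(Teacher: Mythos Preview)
Your overall strategy matches the paper's: you correctly reduce $\Phi X_1-\Phi X_2$ to $-\mathcal T^{-1}\bigl[(\mathcal RX_1-\mathcal RX_2)+(G(X_1)-G(X_2))\bigr]$, invoke Proposition~\ref{prop:norm_T_inv} for $\|\mathcal T^{-1}\|$, telescope $\mathcal R$, and handle $G$ (the paper's $\mathcal G$) by a second telescoping. The paper organises the $G$ part via the four-term decomposition already derived for Proposition~\ref{prop:bound_tinv_X}, whereas you go $D(A,N)\to D(\tilde A,N)\to D(\tilde A,\tilde N)$; these are equivalent bookkeepings and lead to the same final structure.

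There is, however, one incorrect ingredient: the assertion $\|(I+N(P+X_i))^{-1}\|\le 1$ ``since $N$ and $P+X_i$ are positive semi-definite'' is false. A product of two self-adjoint PSD operators is generally not normal, and the resolvent norm can exceed~$1$ (e.g.\ $N=\mathrm{diag}(0,a)$, $M=\bigl(\begin{smallmatrix}1&1\\1&1\end{smallmatrix}\bigr)$ with $a$ large). For the $\mathcal R$ telescoping this claim is in fact unnecessary: arranging each step so the resolvent always sits adjacent to an $N$ lets you use only the bound $\|(I+NM)^{-1}N\|=\|N(I+MN)^{-1}\|\le\|N\|$ from Proposition~\ref{prop:norm_r_x}, exactly as the paper does. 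But in your $D$ estimate the bare resolvent genuinely appears in the term $(X_1-X_2)(I+N'Y_1)^{-1}$, so some bound on $\|(I+N'Y_1)^{-1}\|$ is needed. The paper uses $\|(I+N P_X)^{-1}\|\le 2\|P_X\|$, which follows from $(I+NP_X)^{-1}=P_X^{-1}(P_X^{-1}+N)^{-1}$ once $\sigma_{\min}(P_X)\ge 1/2$ (ensured by $\sigma_{\min}(P)\ge 1$ and $\nu\le 1/2$); alternatively, the identity $(I+N'Y)^{-1}=I-N'(I+YN')^{-1}Y$ gives the unconditional bound $1+\|N'\|\|Y\|$. Either substitute lets your argument go through with constants absorbed as you describe; without it, the $D$ step has a gap.
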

\begin{proof}Let us choose $\nu\leq\min\{1/2, \norm{N}^{-1}\}$, and observe that:
\begin{align}
\norm{\cR X_1 - \cR X_2} \leq& \norm{L^*X_1 N[I + (P + X_1)N]^{-1}X_1L - L^*X_2 N[I + (P + X_2)N]^{-1}X_2L}\\
\leq& \norm{L}^2\norm{X_1 N[I + (P + X_1)N]^{-1}X_1 - X_1 N[I + (P + X_1)N]^{-1}X_2\nonumber\\
&+X_1 N[I + (P + X_1)N]^{-1}X_2-X_2 N[I + (P + X_1)N]^{-1}X_2\nonumber\\
&+ X_2 N[I + (P + X_1)N]^{-1}X_2
	- X_2 N[I + (P + X_2)N]^{-1}X_2}\\
\leq& \norm{L}^22\nu\norm{N}\norm{X_1 - X_2} \nonumber\\
&+ \norm{X_2 N[I + (P + X_1)N]^{-1}\left(X_2N - X_1N\right)[I + (P + X_2)N]^{-1}X_2}\\
=&  \norm{L}^22\nu\norm{N}\norm{X_1 - X_2} \nonumber\\
&+ \norm{X_2 N[I + (P + X_1)N]^{-1}\left(X_2 - X_1\right)N[I + (P + X_2)N]^{-1}X_2}\\
\leq &\norm{L}^2\left\{ 2\nu\norm{N} +\nu^2\norm{N}^2\right\}\norm{X_1 - X_2}\\
\leq & 3\norm{L}^2\nu\norm{N}\norm{X_1 - X_2}.
\end{align}
Again, let $\PX$ be a shorthand for $ P + X$. Having defined for convenience $\cG X = F(\PX, \Atilde, \Btilde) - F(\PX, A, B)$,
and using the definition of $\Phi$ in \eqref{e:def_Phi}, it holds
\begin{align}
	\norm{\Phi X_1 - \Phi X_2} =& \norm{\cTinv}\norm{\cG X_1 - \cG X_2 + \cR X_2 - \cR X_1}\\
	\leq&  \frac{\tau(L, \rho)^2}{1 - \rho^2}[\norm{\cG X_1 - \cG X_2} + 3\norm{L}^2\nu\norm{N}\norm{X_1 - X_2}].
\end{align}
Now, we can study $\cG$. Note that, as in~\cite{mania2019certainty} $\norm{(I + N\PX)^{-1}}\leq 2\norm{\PX}$, since $\nu\leq \frac12$. Then,
{\small
	\begin{align}
	\norm{\cG X_1 - \cG X_2}=&\norm{A^*{\PX}_1(I + N{\PX}_1)^{-1}A- \Atilde^*{\PX}_1(I + \tilde N{\PX}_1)^{-1}\Atilde \nonumber\\
		&- A^*{\PX}_2(I + N{\PX}_2)^{-1}A+ \Atilde^*{\PX}_2(I + \tilde N{\PX}_2)^{-1}\Atilde}\\
	=&\lVert A^*{\PX}_1(I + N{\PX}_1)^{-1} \Delta N {\PX}_1  (I + \Ntilde{\PX}_1)^{-1}A- A^*{\PX}_2(I + N{\PX}_2)^{-1} \Delta N {\PX}_2 (I + \Ntilde{\PX}_2)^{-1}A\label{e:cerulean}\\
	&-  A^*{\PX}_1(I + \Ntilde{\PX}_1)^{-1}\Delta A+  A^*{\PX}_2(I + \Ntilde{\PX}_2)^{-1}\Delta A\label{e:orange}\\
	&-  \Delta A^*{\PX}_1(I + \Ntilde{\PX}_1)^{-1}A+  \Delta A^*{\PX}_2(I + \Ntilde{\PX}_2)^{-1}A\label{e:purple}\\
	&-  \Delta A^*{\PX}_1(I + \Ntilde{\PX}_1)^{-1}\Delta A+  \Delta A^*{\PX}_2(I + \Ntilde{\PX}_2)^{-1}\Delta A\rVert\label{e:sepia}.
\end{align}}%
To control the difference in~\eqref{e:cerulean}, we can use the following inequality. Note that, by definition, $\norm N\leq \norm B^2 \norm {R^{-1}}$. Then, leveraging~\cite[Lemma 7]{mania2019certainty}:
{\small
	\begin{align}
		\norm{A^*{\PX}_1(I + &N{\PX}_1)^{-1} \Delta N {\PX}_1  (I + \Ntilde{\PX}_1)^{-1}A - A^*{\PX}_2(I + N{\PX}_2)^{-1} \Delta N {\PX}_2 (I + \Ntilde{\PX}_2)^{-1}A}\nonumber\\
		=& \norm{A^*{\PX}_1(I + N{\PX}_1)^{-1} \Delta N {\PX}_1  (I + \Ntilde{\PX}_1)^{-1}A - A^*{\PX}_1(I + N{\PX}_2)^{-1} \Delta N {\PX}_1(I + \Ntilde{\PX}_1)^{-1}A\nonumber\\
		&+ A^*{\PX}_1(I + N{\PX}_2)^{-1} \Delta N {\PX}_1  (I + \Ntilde{\PX}_1)^{-1}A - A^*{\PX}_2(I + N{\PX}_2)^{-1} \Delta N {\PX}_1 (I + \Ntilde{\PX}_1)^{-1}A\nonumber\\
		&+A^*{\PX}_2(I + N{\PX}_2)^{-1} \Delta N {\PX}_1  (I + \Ntilde{\PX}_1)^{-1}A - A^*{\PX}_2(I + N{\PX}_2)^{-1} \Delta N {\PX}_2 (I + \Ntilde{\PX}_1)^{-1}A\nonumber\\
		&+A^*{\PX}_2(I + N{\PX}_2)^{-1} \Delta N {\PX}_2  (I + \Ntilde{\PX}_1)^{-1}A - A^*{\PX}_2(I + N{\PX}_2)^{-1} \Delta N {\PX}_2 (I + \Ntilde{\PX}_2)^{-1}A}\\
	\leq& \norm{A}^2(\norm{P} + 1)\norm{\Delta N}\norm{P_{X_1}(I +N{\PX}_1 )^{-1} - P_{X_1}(I +N{\PX}_2 )^{-1}}\nonumber\\
	&+ \norm{A}^2\norm{\Delta N}(\norm P + 1)^3 \norm{X_1 - X_2}\nonumber\\
	&+ \norm{A}^2\norm{\Delta N}(\norm P + 1)^3 \norm{X_1 - X_2}\nonumber\\
	&+ \norm{A}^2(\norm{P} + 1)\norm{\Delta N}\norm{P_{X_2}(I +N{\PX}_1 )^{-1} - P_{X_2}(I +N{\PX}_2 )^{-1}}\\
	\leq& \norm{A}^2(\norm{P} + 1)\norm{\Delta N}\norm{P_{X_1}(I +N{\PX}_1 )^{-1} (I + NP_{X_2} - I - NP_{X_1})(I +N{\PX}_2 )^{-1}}\nonumber\\
	&+ \norm{A}^2\norm{\Delta N}(\norm P + 1)^3 \norm{X_1 - X_2}\nonumber\\
	&+ \norm{A}^2\norm{\Delta N}(\norm P + 1)^3 \norm{X_1 - X_2}\nonumber\\
	&+ \norm{A}^2(\norm{P} + 1)\norm{\Delta N}\norm{P_{X_2}(I + NP_{X_2})^{-1}(I + NP_{X_2} - I - NP_{X_1})(I +N{\PX}_1 )^{-1} }\\
	\leq& \norm{A}^22(\norm{P} + 1)^3\norm{\Delta N}\norm{N}\norm{X_2-X_1}\nonumber\\
	&+ \norm{A}^2\norm{\Delta N}(\norm P + 1)^3 \norm{X_1 - X_2}\nonumber\\
	&+ \norm{A}^2\norm{\Delta N}(\norm P + 1)^3 \norm{X_1 - X_2}\nonumber\\
	&+ \norm{A}^22(\norm{P} + 1)^3\norm{\Delta N}\norm{N}\norm{X_2 -X_1}\\
	\leq& 12\norm{A}^2(\norm{P} + 1)^3\norm{B}^3\norm{R^{-1}}^2\norm{X_2-X_1}\epsilon\nonumber\\
	&+ 6\norm{A}^2\norm{B}\norm{R^{-1}}(\norm P + 1)^3 \norm{X_1 - X_2}\epsilon.
\end{align}}
With a similar reasoning, we can compute an upper bound for the difference in~\eqref{e:orange}, as follows:
\begin{align}
		\norm{A^*{\PX}_2(I +& \Ntilde{\PX}_2)^{-1}\Delta A - A^*{\PX}_1(I + \Ntilde{\PX}_1)^{-1}\Delta A }\nonumber\\
		=&\norm{A^*{\PX}_2(I + \Ntilde{\PX}_2)^{-1}\Delta A - A^*{\PX}_2(I + \Ntilde{\PX}_1)^{-1}\Delta A\nonumber \\
		&+A^*{\PX}_2(I + \Ntilde{\PX}_1)^{-1}\Delta A - A^*{\PX}_1(I + \Ntilde{\PX}_1)^{-1}\Delta A }\\
		\leq&4\norm{A}(\norm{P} + 1)^3\norm{X_2 - X_1}\norm{B}^2\norm{R^{-1}}\epsilon\nonumber\\
		&+ 2\norm{A}\norm{X_2 - X_1}(\norm P  + 1)^2\epsilon.
\end{align}
The difference in~\eqref{e:purple} is the adjoint of the difference in~\eqref{e:orange}, and can be upper bounded by the same factor. An upper bound on the difference in~\eqref{e:sepia} can be computed as follows:
\begin{align}
			\norm{\Delta A^*{\PX}_2(I +& \Ntilde{\PX}_2)^{-1}\Delta A - \Delta A^*{\PX}_1(I + \Ntilde{\PX}_1)^{-1}\Delta A }\nonumber\\
		=&\norm{\Delta A^*{\PX}_2(I + \Ntilde{\PX}_2)^{-1}\Delta A - \Delta A^*{\PX}_2(I + \Ntilde{\PX}_1)^{-1}\Delta A \nonumber\\
			&+\Delta A^*{\PX}_2(I + \Ntilde{\PX}_1)^{-1}\Delta A - \Delta A^*{\PX}_1(I + \Ntilde{\PX}_1)^{-1}\Delta A }\\
		\leq&4(\norm{P} + 1)^3\norm{X_2 - X_1}\norm{B}^2\norm{R^{-1}}\epsilon^2\nonumber\\
		&+ 2\norm{X_2 - X_1}(\norm P  + 1)^2\epsilon^2\\
		\leq & 4(\norm{P} + 1)^3\norm{X_2 - X_1}\norm{B}^3\norm{R^{-1}}\epsilon\nonumber\\
		&+ 2\norm{X_2 - X_1}(\norm P  + 1)^2\norm{B}\epsilon, 
	\end{align}
where we used on the last line the assumption $ε≤‖B‖$. 
The overall upper bound has 6 terms in which $\norm A$ appears at most with degree 2, $\norm P + 1$ with degree 3, $\norm B$ with degree 3, $\norm{R^{-1}}$ with degree 2. With the same reasoning as for the other upper bound, we can write
\begin{equation}
	\norm{ \cG X_1 - \cG X_2} \leq 12 (\norm{A} + 1)^2(\norm{P} + 1)^3( \norm{B} + 1)^3(\norm R^{-1} + 1)^2 \norm{X_1 - X_2}\epsilon.
\end{equation}
To conclude, 
\begin{align}
		\norm{\Phi X_1 - \Phi X_2} \leq 12 \frac{\tau(L, \rho)^2}{1 - \rho^2}[&(\norm{A} + 1)^2(\norm{P} + 1)^3( \norm{B} + 1)^3(\norm R^{-1} + 1)^2 \norm{X_1 - X_2}\epsilon\nonumber \\
		&+ \norm{L}^2\nu\norm{N}\norm{X_1 - X_2}].
\end{align}
\end{proof}

We are now ready to show that $\Phi$ is a contraction from $\mathcal S_{\nu}$ to itself. First, we show that it maps $\cS_{\nu}$ to $\cS_{\nu}$ (Lemma \ref{lemma:T_inv_from_S_to_S}). Then, we show that it is a contraction on $\cS_{\nu}$ (Lemma \ref{lemma:T_inv_contraction}).

\begin{lemma}
	\label{lemma:T_inv_from_S_to_S}
	 Let us choose $\nu$ as a function of $\epsilon$, namely, 
\begin{equation}
	\nu= 	\min\left\{6\epsilon \frac{\tau(L, \rho)^2}{1 - \rho^2}\normp A^2 \normp P^2 \normp B \normp {R^{-1}}, \norm{N}^{-1}, \frac12\right\}.\nonumber
\end{equation}
Moreover, let $\epsilon$ be small enough, namely,
\begin{equation}
	 \epsilon \leq\min\left\{ \frac{1}{12}\normp{L}^{-2} \frac{(1 - \rho^2)^2}{\tau(L, \rho)^4}\normp A^{-2} \normp P^{-2} \normp B^{-3} \normp {R^{-1}}^{-2}, \norm B\right\}.\nonumber
\end{equation}
Lastly, let $\sigma_{\textrm{min}}(P)\geq 1$, and $\rho(L) \leq\rho < 1$. Note that the condition on the singular values of $P$ can be achieved by rescaling $R$ and $Q$ accordingly, as discussed by~\cite{mania2019certainty}.
Then, for $X\in\mathcal S_{\nu}$, we have that
\begin{equation}
	\Phi X\in \mathcal {S}_{\nu}.\nonumber
\end{equation}
\end{lemma}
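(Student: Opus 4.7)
The plan is to verify the three defining properties of $\mathcal S_\nu$ for $\Phi X$: self-adjointness, the norm bound $\n{\Phi X}\leq\nu$, and positive semi-definiteness $P+\Phi X\succcurlyeq 0$. Self-adjointness will come essentially for free from the construction of $\Phi$, and positive semi-definiteness will follow from the norm bound combined with $\sigma_{\min}(P)\geq 1$. The real content is to close the norm inequality by chaining Proposition~\ref{prop:bound_tinv_X}, the definition of $\nu$, and the smallness assumption on $\epsilon$.

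\textbf{Self-adjointness.} First, I would note that $\cT^{-1}$ preserves self-adjointness via its Neumann expansion $\cT^{-1}=\sum_{k\geq 0}\cD^k$, since $\cD Y = L^*YL$ sends self-adjoint operators to self-adjoint operators. The Riccati quantities $F(P+X,A,B)$ and $F(P+X,\nA,\nB)$ are self-adjoint whenever $P+X$ is, and using the commutation identity $[I+N(P+X)]^{-1}N=N[I+(P+X)N]^{-1}$ one can rewrite $\cR X=L^*XN[I+(P+X)N]^{-1}XL$, from which $(\cR X)^*=\cR X$ follows by taking adjoints and using the self-adjointness of $N$ and $P+X$. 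Hence $\Phi X$ is self-adjoint.

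\textbf{Norm bound.} Writing $D=\tfrac{\tau(L,\rho)^2}{1-\rho^2}\normp{A}^2\normp{P}^2\normp{B}\normp{R^{-1}}$, Proposition~\ref{prop:bound_tinv_X} gives
\begin{equation*}
\n{\Phi X}\;\leq\;\tfrac{\tau(L,\rho)^2}{1-\rho^2}\n{L}^2\n{N}\nu^2\;+\;3\epsilon D.
\end{equation*}
The choice $\nu\leq 6\epsilon D$ built into the definition of $\nu$ immediately yields $3\epsilon D\leq \nu/2$. For the remaining term, I would use $\n{N}\leq\n{B}^2\n{R^{-1}}\leq\normp{B}^2\normp{R^{-1}}$ and invoke $\nu\leq 6\epsilon D$ once more to factor out a single $\nu$, leaving a prefactor of the order $6\epsilon\cdot\tau(L,\rho)^4(1-\rho^2)^{-2}\n{L}^2\normp{A}^2\normp{P}^2\normp{B}^3\normp{R^{-1}}^2$. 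The smallness assumption on $\epsilon$ in the lemma statement, together with $\n{L}^2\leq\normp{L}^2$, is exactly what forces this prefactor to be at most $1/2$, so the first term is also bounded by $\nu/2$ and $\n{\Phi X}\leq\nu$.

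\textbf{Positive semi-definiteness and main obstacle.} Since $\nu\leq 1/2$ and $\sigma_{\min}(P)\geq 1$, the inequality $P+\Phi X\succcurlyeq (1-\nu)I\succcurlyeq\tfrac12 I\succcurlyeq 0$ is automatic, completing the verification that $\Phi X\in\mathcal S_\nu$. The chief difficulty is purely bookkeeping in the norm bound: the term $\n{L}^2\n{N}\nu^2$ is quadratic in $\nu$ and hence quadratic in $\epsilon$ after substituting $\nu\leq 6\epsilon D$, while the quadratic prefactor carries $\tau(L,\rho)^4/(1-\rho^2)^2$ and large powers of the operator norms. The specific exponents on $\normp{A},\normp{P},\normp{B},\normp{R^{-1}},\normp{L}$ in the hypothesis on $\epsilon$ are engineered precisely so that this quadratic contribution gets absorbed into $\nu/2$, matching the linear-in-$\epsilon$ target.
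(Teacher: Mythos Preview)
Your overall plan—check self-adjointness, bound $\n{\Phi X}$, then deduce $P+\Phi X\succcurlyeq 0$ from $\sigma_{\min}(P)\geq 1$—is exactly the paper's structure, and your treatment of self-adjointness and of the positive semi-definiteness step is fine.

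There is, however, a genuine logical error in your norm bound. You write that the definition of $\nu$ gives $\nu\leq 6\epsilon D$, and that this ``immediately yields $3\epsilon D\leq \nu/2$''. The implication goes the wrong way: $\nu\leq 6\epsilon D$ only gives $\nu/2\leq 3\epsilon D$. If, say, $\|N\|^{-1}$ or $1/2$ is the active constraint in the minimum, then $\nu<6\epsilon D$ and your split into ``first term $\leq\nu/2$'' plus ``second term $\leq\nu/2$'' fails for the second term.

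The paper closes this gap differently. It does not split into halves of $\nu$; instead it factors $3\epsilon\normp{P}^2\normp{A}^2\normp{R^{-1}}\normp{B}$ out of both terms in the bound of Proposition~\ref{prop:bound_tinv_X} and shows (via the smallness hypothesis on $\epsilon$) that the remaining bracket is at most $2$, obtaining $\n{\Phi X}\leq 6\epsilon D$ regardless of which term in the minimum defines $\nu$. It then uses the $\epsilon$ hypothesis a second time to show $6\epsilon D\leq\min\{1/2,\|N\|^{-1}\}$, so that in fact $\nu=6\epsilon D$ and $\n{\Phi X}\leq\nu$. Equivalently, you could repair your argument by first proving $6\epsilon D\leq\min\{1/2,\|N\|^{-1}\}$ from the $\epsilon$ bound, so that $\nu=6\epsilon D$ and your equality $3\epsilon D=\nu/2$ becomes legitimate; but this step must be invoked explicitly, not inferred from $\nu\leq 6\epsilon D$.
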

\begin{proof}
In order to prove the lemma, we need to first show that $\Phi X$ has norm upper bounded by $\nu$, for $X\in\cS_ν$ and a suitable choice of $\nu$. Then, we need to show that $P + \Phi X$ is self-adjoint and positive semi-definite. Starting from Proposition~\ref{prop:bound_tinv_X}, we see that:
\begin{align}
	\norm{\Phi X} &\leq \frac{ \tau(L, \rho)^2}{1-\rho^2}\left[\norm{L}^2\norm{N}\nu^2 +3\epsilon\normp{P}^2\normp{A}^2\normp{R^{-1}}\normp{B}  \right]\\
	&\leq	\frac{ \tau(L, \rho)^2}{1-\rho^2}\left[\norm{L}^2\norm{N}	\left(6\epsilon \frac{\tau(L, \rho)^2}{1 - \rho^2}\normp A^2 \normp P^2 \normp B \normp {R^{-1}}\right)^2\right.\nonumber\\
	 &\quad\left.+3\epsilon\normp{P}^2\normp{A}^2\normp{R^{-1}}\normp{B}  \right]\\
	&= \frac{ \tau(L, \rho)^2}{1-\rho^2}\left[\norm{L}^2\norm{N}12\epsilon \frac{\tau(L, \rho)^4}{(1 - \rho^2)^2}\normp A^2 \normp P^2 \normp B \normp {R^{-1}} + 1 \right]\nonumber\\
	&\quad\cdot3\epsilon\normp{P}^2\normp{A}^2\normp{R^{-1}}\normp{B} \\
	&\leq  \frac{ \tau(L, \rho)^2}{1-\rho^2}\left[\norm{L}^212\epsilon \frac{\tau(L, \rho)^4}{(1 - \rho^2)^2}\normp A^2 \normp P^2 \normp B^3 \normp {R^{-1}}^2 + 1 \right]\nonumber\\
	&\quad\cdot3\epsilon\normp{P}^2\normp{A}^2\normp{R^{-1}}\normp{B} .
\end{align}
For the choice of $\epsilon$ in the statement of this lemma, the bracket term is bounded by 2, meaning that:
\begin{align}
	\norm{\Phi X}
	\leq  \frac{ \tau(L, \rho)^2}{1-\rho^2}6\epsilon\normp{P}^2\normp{A}^2\normp{R^{-1}}\normp{B} .
\end{align}
Note that, for our choice of $\epsilon$,
\begin{align}
	\norm{\Phi X}
	&\leq \frac{1}{2}\frac{1-\rho^2}{ \tau(L, \rho)^2}\normp{R^{-1}}^{-1}\normp{B}^{-2}\normp{L}^{-2}\\
	&\leq\min\left\{\frac 12, \norm{N}^{-1}\right\},
\end{align}
since $‖N‖=‖BR^{-1}B^*‖≤‖B‖²‖R^{-1}‖$ and $\frac{1-\rho^2}{ \tau(L, \rho)^2}\leq 1$. This can be noted by observing that $\tau(L, \rho)$ is a decreasing function of $\rho$ and is equal to 1 for $\rho\geq\norm{L}$ (see~\cite[Page 3]{mania2019certainty}), and $1 - \rho^2 < 1$ by our choice of $\rho$. 
All in  all, this means that $\norm{\Phi X}\leq \nu$. 
To conclude the proof, we can observe that, since $P$ is positive semi-definite and $\sigma_{\textit{min}}(P)\geq 1$ by assumption, for $a\in \cH_1$,
\begin{align}
	\innerprod{(P + \Phi X)a, a}_{\cH_1}&= \innerprod{Pa, a}_{\cH_1} + \innerprod{\Phi Xa, a}_{\cH_1}\\
	&\geq \innerprod{Pa, a}_{\cH_1} - \norm{\Phi X}\norm{a}^2\\
	&\geq \innerprod{Pa, a}_{\cH_1} -\nu\norm{a}^2\\
	&= \norm{P^{1/2}a}^2 -\nu\norm{a}^2\\
	&\geq [\sigma_{\textit{min}}(P) - \nu]\norm a^2\\
	&\geq 0.
\end{align}
Hence, $P + \Phi X\succcurlyeq 0$. 
Moreover, $ΦX$ is self-adoint because $\cT,\cR$ and $X\mapsto F(P+X,A,B)$ preserve self-adjointness.
\end{proof}
\begin{lemma} 
	\label{lemma:T_inv_contraction}
	Let $\nu $ be defined as in Lemma~\ref{lemma:T_inv_from_S_to_S}, and $\rho(L) \leq\rho < 1$. Let 
\begin{equation}
	12\frac{\tau(L, \rho)^4}{(1 - \rho^2)^2}\left[\normp{P}+\normp{L}^2\right]\normp A^2 \normp P^2\normp B^3\normp{R^{-1}}^2\epsilon< 1.\nonumber
\end{equation}
Then, $\exists\eta < 1$ such that, $\forall X_1, X_2 \in \cS_{\nu}$,
	\begin{equation}
		\norm{\Phi X_1 - \Phi X_2}\leq\eta\norm{X_1 - X_2}.\nonumber
	\end{equation}
\end{lemma}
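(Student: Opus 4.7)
The plan is to bootstrap directly from the Lipschitz estimate established in Proposition~\ref{prop:bound_lipschitz_invt}, which already gives
\begin{align*}
	\n{\Phi X_1 - \Phi X_2}
		&\leq 12 \tfrac{\tau(L,\rho)^2}{1-\rho^2}\brk*{\normp{A}^2\normp{P}^3\normp{B}^3\normp{R^{-1}}^2\epsilon + \n{L}^2\nu\n{N}}\n{X_1-X_2},
\end{align*}
so the only remaining work is to absorb the term $\n{L}^2\nu\n{N}$ into a form that matches the hypothesis of the lemma and to read off a contraction constant strictly less than~$1$.

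First, I would use the choice of $\nu$ made in Lemma~\ref{lemma:T_inv_from_S_to_S}, namely
$\nu \leq 6\epsilon\tfrac{\tau(L,\rho)^2}{1-\rho^2}\normp{A}^2\normp{P}^2\normp{B}\normp{R^{-1}}$,
combined with the elementary inequality $\n{N}=\n{BR^{-1}B^*}\leq \n{B}^2\n{R^{-1}}$, to conclude
\begin{align*}
	\n{L}^2\nu\n{N}
		&\leq 6\epsilon \tfrac{\tau(L,\rho)^2}{1-\rho^2}\n{L}^2\normp{A}^2\normp{P}^2\normp{B}^3\normp{R^{-1}}^2.
\end{align*}
Plugging this back into the Lipschitz bound and factorising the common quantity $\normp{A}^2\normp{P}^2\normp{B}^3\normp{R^{-1}}^2\epsilon$, one arrives at an estimate of the form
\begin{align*}
	\n{\Phi X_1-\Phi X_2}
		&\leq C\, \tfrac{\tau(L,\rho)^4}{(1-\rho^2)^2}\brk[\big]{\normp{P}+\n{L}^2}\normp{A}^2\normp{P}^2\normp{B}^3\normp{R^{-1}}^2\epsilon\, \n{X_1-X_2},
\end{align*}
where $C$ is an absolute constant. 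The implicit bookkeeping uses $\tfrac{\tau(L,\rho)^2}{1-\rho^2}\geq 1$ (observed in the proof of Lemma~\ref{lemma:T_inv_from_S_to_S}) to collect the two terms inside the bracket into $\normp{P}+\n{L}^2$.

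Finally, by the hypothesis of the lemma, the whole prefactor multiplying $\n{X_1-X_2}$ is strictly less than $1$, so one may define
\begin{equation*}
	\eta \de C\, \tfrac{\tau(L,\rho)^4}{(1-\rho^2)^2}\brk[\big]{\normp{P}+\n{L}^2}\normp{A}^2\normp{P}^2\normp{B}^3\normp{R^{-1}}^2\epsilon < 1,
\end{equation*}
which is exactly the contraction constant claimed. No step requires a genuinely new idea beyond Proposition~\ref{prop:bound_lipschitz_invt}; the only mild subtlety is to make sure the constants and powers of $\normp{\cdot}$ match those in the hypothesis, so the main ``obstacle'' is just clean bookkeeping of the exponents of $\n{A},\n{B},\n{P},\n{R^{-1}}$ that emerge from the definition of $\nu$ and from the bound $\n{N}\leq \n{B}^2\n{R^{-1}}$.
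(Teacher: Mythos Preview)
Your proposal is correct and follows essentially the same approach as the paper: start from the Lipschitz bound of Proposition~\ref{prop:bound_lipschitz_invt}, substitute the upper bound on $\nu$ from Lemma~\ref{lemma:T_inv_from_S_to_S}, use $\n{N}\leq\n{B}^2\n{R^{-1}}$, and invoke $\tfrac{\tau(L,\rho)^2}{1-\rho^2}\geq 1$ to align both terms with the prefactor appearing in the hypothesis. The only thing left implicit in your sketch is checking that the absolute constant $C$ you obtain indeed matches the $12$ in the hypothesis, which is precisely the ``clean bookkeeping'' you flag; the paper carries out exactly this bookkeeping line by line.
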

\begin{proof}Let us plug our choice of $\nu$ from Lemma~\ref{lemma:T_inv_from_S_to_S} in the bound of Proposition~\ref{prop:bound_lipschitz_invt}, to get
\begin{align}
	 12\frac{\tau(L, \rho)^2}{1 - \rho^2}&\left[ \normp{A}^2\normp{P}^3\normp{B}^3\normp{ R^{-1}}^2 \norm{X_1 - X_2}\epsilon+ \norm{L}^2\nu\norm{N}\norm{X_1 - X_2}\right]\nonumber\\
	 &=	 12\frac{\tau(L, \rho)^2}{1 - \rho^2}\left[\normp{P}\normp{B}^2\normp{ R^{-1}} + \frac{\tau(L, \rho)^2}{1 - \rho^2}\norm{L}^2\norm{N}\right]\nonumber\\
	 &\quad\cdot\normp A^2 \normp P^2\normp B\normp{R^{-1}}\epsilon\norm{X_1 - X_2}\\
	 &\leq 12\frac{\tau(L, \rho)^2}{1 - \rho^2}\left[\normp{P}\normp{B}^2\normp{ R^{-1}} + \frac{\tau(L, \rho)^2}{1 - \rho^2}\norm{L}^2\normp B^2 \normp{R^{-1}}\right]\nonumber\\
	 &\quad\cdot\normp A^2 \normp P^2\normp B\normp{R^{-1}}\epsilon\norm{X_1 - X_2}\\
	 &=12 \frac{\tau(L, \rho)^2}{1 - \rho^2}\left[\normp{P}+ \frac{\tau(L, \rho)^2}{1 - \rho^2}\norm{L}^2\right]\nonumber\\
	 &\quad\cdot\normp A^2 \normp P^2\normp B^3\normp{R^{-1}}^2\epsilon\norm{X_1 - X_2}\\
	 &\leq 12 \frac{\tau(L, \rho)^2}{1 - \rho^2}\left[ \frac{\tau(L, \rho)^2}{1 - \rho^2}\normp{P}+ \frac{\tau(L, \rho)^2}{1 - \rho^2}\norm{L}^2\right]\nonumber\\
	 &\quad\cdot\normp A^2 \normp P^2\normp B^3\normp{R^{-1}}^2\epsilon\norm{X_1 - X_2}\\
	 &\leq 12 \frac{\tau(L, \rho)^2}{1 - \rho^2}\left[ \frac{\tau(L, \rho)^2}{1 - \rho^2}\normp{P}+ \frac{\tau(L, \rho)^2}{1 - \rho^2}\normp{L}^2\right]\nonumber\\
 	 &\quad\cdot\normp A^2 \normp P^2\normp B^3\normp{R^{-1}}^2\epsilon\norm{X_1 - X_2}.
\end{align}
Again, in the last step we used the fact that $\tau(L, \rho)$ is a decreasing function of $\rho$ and is equal to 1 for $\rho\geq\norm{L}$ (see~\cite[Page 3]{mania2019certainty}), and $1 - \rho^2 < 1$ by our choice of $\rho$.
The operator $\Phi$ is a contraction if  $\epsilon$ is small enough, namely,
\begin{equation}
12\frac{\tau(L, \rho)^4}{(1 - \rho^2)^2}\left[\normp{P}+\normp{L}^2\right]\normp A^2 \normp P^2\normp B^3\normp{R^{-1}}^2\epsilon< 1.
\end{equation}
\end{proof}

Lemmas~\ref{lemma:T_inv_from_S_to_S},~\ref{lemma:T_inv_contraction} imply that, for $\epsilon$ small enough, the unique fixed point of $\Phi$ is in $\mathcal S_{\nu}$. According to Proposition~\ref{prop:T_inv}, that fixed point is exactly the error on the Riccati operator due to the Nyström approximation, whose norm scales linearly in $\epsilon$. To conclude the proof of Lemma~\ref{lemma:convergence_p_operator}, we can observe that the conditions stated in Lemmas~\ref{lemma:T_inv_from_S_to_S},~\ref{lemma:T_inv_contraction} yield an upper bound on $\epsilon$:
\begin{equation}
	\epsilon< \frac{1}{12} \frac{1}{\normp{L}^{2} +\normp{P}}\frac{(1 - \rho^2)^2}{\tau(L, \rho)^4}\normp A^{-2} \normp P^{-2} \normp B^{-3} \normp {R^{-1}}^{-2} ,
\end{equation}
guaranteeing
\begin{equation}
	\norm{P-\tilde P}\leq 6\epsilon \frac{\tau(L, \rho)^2}{1 - \rho^2}\normp A^2 \normp P^2 \normp B \normp {R^{-1}}.
\end{equation}

\section{Proof of Theorem~\ref{thm:convergence_lqr_objective}}

The proof follows the steps in~\cite[Theorem 1]{mania2019certainty}, adapted to the operator setting considered in this work. We begin by computing an upper bound on the error for the Riccati gain $\norm{K-\Ktilde}$. We then show that when this error is small enough, $\Ktilde$ stabilizes the system with exact kernel defined in~\eqref{eq:dynamics_compound}. Lastly, we use~\cite[Lemma 10]{fazel2018global} to upper bound the error on the objective functions.
We begin by re-stating two technical lemmas that appear in~\cite[Section 2.3]{mania2019certainty} generalized to the case in which the LQR objective contains operators.

	\begin{lemma}[Lemma 1 of~\cite{mania2019certainty}]
	\label{lemma:strongly_convex}
	Let $f_1$, $f_2$ be two $\mu$-strongly convex twice differentiable functions on a Hilbert space. Let $\xvec_1=\argmin_{\xvec} f_1(\xvec)$ and $\xvec_2 = \argmin_{\xvec}=f_2(\xvec)$. If $\norm{\frac{\partial f_1(\xvec)}{\partial \xvec}\rvert_{\xvec=\xvec_2}}\leq \epsilon$, then $\norm{\xvec_1 - \xvec_2}\leq \frac\epsilon\mu$.
	\end{lemma}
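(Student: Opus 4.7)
The plan is to exploit the standard characterization of $\mu$-strong convexity via a monotonicity inequality on the gradient, and then reduce the claim to a single application of the Cauchy--Schwarz inequality. Specifically, I will use the fact that for a differentiable $\mu$-strongly convex function $f$ on a Hilbert space, the co-coercivity/monotonicity bound
\begin{equation*}
	\langle \nabla f(x) - \nabla f(y),\, x - y\rangle \geq \mu \,\norm{x-y}^2
\end{equation*}
holds for all $x,y$. This is the crucial ingredient, and it is the one place where strong convexity of $f_1$ (as opposed to mere convexity) enters.

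First, I would invoke first-order optimality at the minimizer $\xvec_1$ of $f_1$, which gives $\nabla f_1(\xvec_1) = 0$. Plugging $x = \xvec_2$ and $y = \xvec_1$ into the monotonicity inequality above then yields
\begin{equation*}
	\langle \nabla f_1(\xvec_2),\, \xvec_2 - \xvec_1\rangle \geq \mu\, \norm{\xvec_1 - \xvec_2}^2.
\end{equation*}
Next, I would bound the left-hand side from above by Cauchy--Schwarz, obtaining $\norm{\nabla f_1(\xvec_2)} \cdot \norm{\xvec_1 - \xvec_2} \geq \mu \norm{\xvec_1 - \xvec_2}^2$. Dividing through by $\mu \norm{\xvec_1 - \xvec_2}$ (the case $\xvec_1 = \xvec_2$ being trivial) and using the hypothesis $\norm{\nabla f_1(\xvec_2)} \leq \epsilon$ concludes the argument.

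One small care point is that the monotonicity bound is stated for differentiable strongly convex functions on a Hilbert space; since the lemma hypothesizes $f_1$ is twice differentiable and $\mu$-strongly convex, this is directly available (one can also derive it by integrating $\nabla^2 f_1 \succeq \mu I$ along the segment $[\xvec_1,\xvec_2]$, which avoids any functional-analytic subtlety). The role of $f_2$ is purely to define the point $\xvec_2$ at which the gradient of $f_1$ is small; strong convexity of $f_2$ is not actually used in the bound itself, only implicitly through existence and uniqueness of $\xvec_2$. There is no real obstacle here: the entire argument is a two-line computation, and the difficulty lies only in correctly identifying that strong convexity of $f_1$ (not $f_2$) is what drives the conclusion.
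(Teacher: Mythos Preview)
Your argument is correct and is the standard proof of this fact: strong convexity of $f_1$ gives the gradient monotonicity inequality, first-order optimality at $\xvec_1$ kills one term, and Cauchy--Schwarz finishes. Note that the paper does not actually supply its own proof of this lemma; it merely restates it from \cite{mania2019certainty} and uses it as a black box, so there is nothing to compare against beyond confirming your derivation is sound.
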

\begin{lemma}
	\label{lemma:bound_on_riccati_gain}
	Let $A_i:\cH_1\to\cH_1$, $B_i:\R^{n_u}\to\cH_1$, $i=1,2$.
	Let us define $f_i:\R^{n_u}\times\cH_1\rightarrow \R$, $f_i(\uvec, z) = \frac12\innerprod{\uvec, R\uvec}_{\R^{n_u}} + \frac12\innerprod{A_iz + B_i\uvec, P_i(A_iz + B_i\uvec)}_{\cH_1}$ where $R$, $P_i$ are positive definite operators, $i=1, 2$, $z\in \cH_1$, $\uvec\in\R^{n_u}$. Let $K_i:\cH_1\to\R^{n_u}$ be the unique operator s.t.\ $\uvec_i = \argmin_{\uvec}f_i(\uvec, z)=K_i z$ for any $z$. Let us define the quantity $\Gamma = 1 +\max\{\norm {A_1}, \norm {B_1}, \norm {P_1}, \norm {K_1}\}$, and $\norm{A_1 - A_2}\leq \epsilon$, $\norm{B_1 - B_2}\leq \epsilon$, and $\norm{P_1 - P_2}\leq \epsilon$, for $\epsilon\in[0, 1)$. Then, 
	\begin{equation}
		\norm{K_1 - K_2} \leq \frac{3\epsilon\Gamma^3}{\sigma_{\textrm{min}}(R)}.
	\end{equation}
\end{lemma}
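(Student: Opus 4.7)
The plan is to apply Lemma~\ref{lemma:strongly_convex} pointwise in $z$, using strong convexity of the quadratic objective $f_i(\cdot,z)$ in the control variable, and then bound the gradient of one objective evaluated at the optimizer of the other via a telescoping argument. Specifically, a direct computation gives
\begin{equation*}
    \tfrac{\partial f_i}{\partial \uvec}(\uvec,z) = (R + B_i^*P_iB_i)\uvec + B_i^*P_iA_i z,
\end{equation*}
which shows that $f_i(\cdot,z)$ is $\sigma_{\textrm{min}}(R)$-strongly convex since $B_i^*P_iB_i\succcurlyeq 0$ (with $R$ positive definite by hypothesis). Since $\uvec_1 = K_1 z$ satisfies $\tfrac{\partial f_1}{\partial \uvec}(K_1 z, z) = 0$, the residual of $\tfrac{\partial f_2}{\partial \uvec}$ at $K_1 z$ is
\begin{equation*}
    \tfrac{\partial f_2}{\partial \uvec}(K_1 z, z) = (B_2^*P_2B_2 - B_1^*P_1B_1)K_1 z + (B_2^*P_2A_2 - B_1^*P_1A_1) z.
\end{equation*}

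Next I would bound each of the two triple-product differences by a standard ``add and subtract'' argument. Using $\|A_1-A_2\|,\|B_1-B_2\|,\|P_1-P_2\|\le\epsilon$ together with the observation that $\epsilon<1$ and $\Gamma\ge 1$ imply $\|A_2\|,\|B_2\|,\|P_2\|\le\Gamma$, one obtains
\begin{equation*}
    \|B_2^*P_2A_2 - B_1^*P_1A_1\| \le 3\epsilon\Gamma^2, \qquad
    \|B_2^*P_2B_2 - B_1^*P_1B_1\| \le 3\epsilon\Gamma^2.
\end{equation*}
Combining with $\|K_1\|\le \Gamma-1\le\Gamma$ yields $\|\tfrac{\partial f_2}{\partial \uvec}(K_1 z, z)\|\le (3\epsilon\Gamma^2\cdot\Gamma + 3\epsilon\Gamma^2)\|z\|\le 3\epsilon\Gamma^3\|z\|$.

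Finally, Lemma~\ref{lemma:strongly_convex} applied to the $\sigma_{\textrm{min}}(R)$-strongly convex map $\uvec\mapsto f_2(\uvec,z)$ (whose minimizer is $K_2 z$) gives
\begin{equation*}
    \|K_1 z - K_2 z\| \le \frac{1}{\sigma_{\textrm{min}}(R)}\Big\|\tfrac{\partial f_2}{\partial \uvec}(K_1 z, z)\Big\| \le \frac{3\epsilon\Gamma^3}{\sigma_{\textrm{min}}(R)}\|z\|,
\end{equation*}
and taking the supremum over $\|z\|\le 1$ gives the claimed operator-norm bound. There is no genuine technical obstacle here beyond bookkeeping: the main subtlety is to expand the perturbation around $K_1$ (rather than $K_2$) so that the constants can be absorbed into $\Gamma^3$ via $\|K_1\|\le\Gamma$, which is the quantity appearing in the definition of $\Gamma$. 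The whole argument works verbatim in the operator setting because $\R^{n_u}$ is a Hilbert space and the quadratic structure of $f_i$ does not depend on $\cH_1$ being finite-dimensional.
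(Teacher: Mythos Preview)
Your proposal is correct and follows essentially the same route as the paper's proof: compute the gradient of $f_i$, use the $\sigma_{\min}(R)$-strong convexity together with Lemma~\ref{lemma:strongly_convex}, bound the two triple-product differences by $3\epsilon\Gamma^2$ via telescoping, and finish by taking a supremum over $\|z\|\le 1$. One minor slip: the displayed chain $(3\epsilon\Gamma^2\cdot\Gamma + 3\epsilon\Gamma^2)\|z\|\le 3\epsilon\Gamma^3\|z\|$ is literally false as written; you need the sharper bound $\|K_1\|\le\Gamma-1$ (which you already stated) to get $3\epsilon\Gamma^2(\Gamma-1)+3\epsilon\Gamma^2=3\epsilon\Gamma^3$, which is exactly how the paper closes the argument.
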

\begin{remark}
	Each $f_i$ in the statement of this lemma is the so-called state-action value function in the language of~\cite{fazel2018global}. It corresponds to an LQR objective when picking an arbitrary control input $\uvec$ at time 0, and then proceeding with the optimal control policy (static state feedback gain) for the next time steps. It is known that, if at timestep 1 we start using the LQR optimal control, the LQR objective from that timestep onwards is equal to $\frac{1}{2}\innerprod{z_1, Pz_1}_{\cH_1}$. 
\end{remark}
\begin{remark} 
	The gains $K_1$ (resp.\ $K_2$) in the statement of this lemma are the Riccati gains, obtained with the dynamics given by $A_1$, $B_1$ (resp.\ $A_2$, $B_2$).
\end{remark}
\begin{proof}
The structure of the proof is as follows. We begin by showing that our choice of $f_1, f_2$ fulfills the hypotheses of~\ref{lemma:strongly_convex}. Then we apply such a lemma to upper bound the error of interest. 
	Computing the gradient of $f_i$ w.r.t.\ $\uvec$ yields:
	\begin{align}
		\frac{\partial f_i (\uvec, z)}{\partial \uvec} &= R\uvec + \frac{\partial}{\partial \uvec}\left\{\frac12\innerprod{A_iz + B_i\uvec, P_i(A_iz + B_i\uvec)}_{\cH_1}\right\} \\
		&= R\uvec +  \frac{\partial}{\partial \uvec}\left\{\frac12\innerprod{A_iz, P_iA_iz }_{\cH_1} + \innerprod{A_iz, P_iB_i\uvec}_{\cH_1} +  \frac12\innerprod{B_i\uvec, P_iB_i\uvec}_{\cH_1}\right\}\\
		&= R\uvec + B_i^*P_iA_iz + B_i^*P_i^{1/2}P_i^{1/2}B_i\uvec\\
		&= B_i^*P_iA_iz + (B_i^*P_iB_i + R)\uvec.\label{eq:derivative_objective}
	\end{align}
After having derived the gradient expression above, we can bound two differences that will appear in the remainder of the proof. In the same way as in~\cite{mania2019certainty}, we can observe that, having defined $\Lambda=\max\{\norm{P_1}, \norm{B_1}, \norm{A_1}\}$, and considering the assumption that $\epsilon\in[0, 1)$:
\begin{align}
	\norm{B_1^*P_1B_1 - B_2^*P_2B_2} =& \norm{(B_1^* - B_2^*)P_1B_1 + B_2^*(P_1 - P_2)B_1 + B_2^*P_2(B_1 - B_2)}\\
	=&\norm{(B_1^* - B_2^*)P_1B_1 + B_2^*(P_1 - P_2)B_1 + B_2^*P_2(B_1 - B_2)\nonumber\\
		&-B_1^*(P_1 - P_2)B_1 + B_1^*(P_1 - P_2)B_1\nonumber\\
		&-B_1^*P_2(B_1 - B_2) + B_1^*P_2(B_1 - B_2)}\\
	=&\norm{(B_1^* - B_2^*)P_1B_1 + (B_2^* - B_1^*)(P_1 - P_2)B_1 + (B_2^* - B_1^*)P_2(B_1 - B_2)\nonumber\\
		&+ B_1^*(P_1 - P_2)B_1+ B_1^*P_2(B_1 - B_2)\nonumber\\
		&- (B_2^* - B_1^*)P_1(B_1 - B_2) + (B_2 ^*- B_1^*)P_1(B_1 - B_2)\nonumber\\
		&-B_1^*P_1(B_1 - B_2) + B_1^*P_1(B_1 - B_2)}\\
	=& \norm{(B_1^* - B_2^*)P_1B_1 + (B_2^* - B_1^*)(P_1 - P_2)B_1 \nonumber\\
		&+ (B_2^* - B_1^*)(P_2 - P_1)(B_1 - B_2)\nonumber\\
		&+ B_1^*(P_1 - P_2)B_1+ B_1^*(P_2 - P_1)(B_1 - B_2)\nonumber\\
		&+ (B_2^* - B_1^*)P_1(B_1 - B_2)\nonumber\\
		&+ B_1^*P_1(B_1 - B_2)}\\
	\leq& \epsilon \norm{P_1}\norm{B_1} + \epsilon^2\norm{B_1} + \epsilon^3+ \norm{B_1}^2\epsilon+ \norm{B_1}\epsilon^2+ \epsilon^2\norm{P_1}+ \norm{B_1}\norm{P_1}\epsilon\\
	\leq& \epsilon\Lambda^2 + \epsilon^2\Lambda + \epsilon^3 + \epsilon\Lambda^2 + \epsilon\Lambda + \epsilon^2 \Lambda + \Lambda^2\epsilon\\
	\leq& \epsilon (3\Lambda^2 + 3\Lambda + 1)\\
	\leq& 3\epsilon\Gamma^2.
\end{align}
Similarly, we have  
\begin{align}
		\norm{B_1^*P_1A_1 - B_2^*P_2A_2} =& \norm{(B_1^* - B_2^*)P_1A_1 + B_2^*(P_1 - P_2)A_1 + B_2^*P_2(A_1 - A_2)}\\
	=&\norm{(B_1^* - B_2^*)P_1A_1 + B_2^*(P_1 - P_2)A_1 + B_2^*P_2(A_1 - A_2)\nonumber\\
		&-B_1^*(P_1 - P_2)A_1 + B_1^*(P_1 - P_2)A_1\nonumber\\
		&-B_1^*P_2(A_1 - A_2) + B_1^*P_2(A_1 - A_2)}\\
	=&\norm{(B_1^* - B_2^*)P_1A_1 + (B_2^* - B_1^*)(P_1 - P_2)A_1 + (B_2^* - B_1^*)P_2(A_1 - A_2)\nonumber\\
		&+ B_1^*(P_1 - P_2)A_1+ B_1^*P_2(A_1 - A_2)\nonumber\\
		&- (B_2^* - B_1^*)P_1(A_1 - A_2) + (B_2 ^*- B_1^*)P_1(A_1 - A_2)\nonumber\\
		&-B_1^*P_1(A_1 - A_2) + B_1^*P_1(A_1 - A_2)}\\
	=& \norm{(B_1^* - B_2^*)P_1A_1 + (B_2^* - B_1^*)(P_1 - P_2)A_1 \nonumber\\
		&+ (B_2^* - B_1^*)(P_2 - P_1)(A_1 - A_2)\nonumber\\
		&+ B_1^*(P_1 - P_2)A_1+ B_1^*(P_2 - P_1)(A_1 - A_2)\nonumber\\
		&+ (B_2^* - B_1^*)P_1(A_1 - A_2)\nonumber\\
		&+ B_1^*P_1(A_1 - A_2)}\\
	\leq& \epsilon \norm{P_1}\norm{A_1} + \epsilon^2\norm{A_1} + \epsilon^3\nonumber\\
		&+ \norm{B_1}\norm{A_1}\epsilon+ \norm{B_1}\epsilon^2+ \epsilon^2\norm{P_1}+ \norm{B_1}\norm{P_1}\epsilon\\
	\leq& \epsilon\Lambda^2 + \epsilon^2\Lambda + \epsilon^3 + \epsilon\Lambda^2 + \epsilon\Lambda + \epsilon^2 \Lambda + \Lambda^2\epsilon\\
	\leq& \epsilon (3\Lambda^2 + 3\Lambda + 1)\\
	\leq& 3\epsilon\Gamma^2.
\end{align}
We can then consider the following difference. For arbitrary $\uvec$, $z$, by using the gradient expression in~\eqref{eq:derivative_objective} and the triangle inequality,
\begin{equation}
	\left\lVert	\frac{\partial f_1 (\uvec, z)}{\partial \uvec} - 	\frac{\partial f_2 (\uvec, z)}{\partial \uvec}\right\rVert\leq 3\epsilon\Gamma^2(\norm \uvec + \norm z).\label{eq:grad_diff}
\end{equation}

In particular, for any $z$ \st $‖z‖≤1$, this inequality applied to $\uvec_1=K_1 z$ yields by the first-order optimality condition of $\uvec_1$:
\begin{align}
		\left\lVert	\frac{\partial f_1 (\uvec, z)}{\partial \uvec} \rvert_{\uvec=\uvec_1}- 	\frac{\partial f_2 (\uvec, z)}{\partial \uvec}\rvert_{\uvec=\uvec_1}\right\rVert
		&=	\left\lVert	\frac{\partial f_2 (\uvec, z)}{\partial \uvec}\rvert_{\uvec=\uvec_1}\right\rVert 
		\leq 3\epsilon\Gamma^2(\norm {\uvec_1} + 1).
\end{align}

Hence, still for any $z$ with $‖z‖≤1$, applying Lemma~\ref{lemma:strongly_convex} to the functions $f₁,f₂$ defined in the statement, which are $\mu$-strongly convex with $\mu\geq \sigma_{\textrm{min}}(R)$, we get
\begin{align}
	\n{K₁z-K₂z}
	&≤ \sigma_{\textrm{min}}(R)^{-1} 3εΓ^2(\norm{¯\uvec_1} +1) \\
	&\leq \sigma_{\textrm{min}}(R)^{-1}3εΓ^2(\norm{K_1}\norm{z} +1)\\
	&\leq \sigma_{\textrm{min}}(R)^{-1}3εΓ^2(\norm{K_1} +1)\\
	&\leq \sigma_{\textrm{min}}(R)^{-1}3εΓ^3.
\end{align}

This yields the claimed result given that
\begin{align}
	\norm{K_1 - K_2} 
	&= \sup_{\norm{z}\leq 1} \norm{(K_1 - K_2)z}. 
\end{align}
\end{proof}

Having proved the lemma above, we now specialize it to the case of exact vs.\ Nyström kernel-based gains. 
\begin{proposition}
	Let $\epsilon>0$ be s.t.\ $\norm{ A - \Atilde}\leq \epsilon$, $\norm{ B - \Btilde}\leq \epsilon$, and $\norm{P - \Ptilde}\leq g(\epsilon)$ with $g(\epsilon)\geq \epsilon$. Then, assuming $R$ and $Q$ are positive definite and $\sigma_{\textrm{min}}(R)\geq1$, we have
	\begin{equation}
		\label{eq:bound_gain_error}
		\norm{\Ktilde - K} \leq 3\Gamma^3g(\epsilon).
	\end{equation}
Moreover, choose $\rho$ such that $\rho(L) \leq \rho<1$. If $\epsilon$ is small enough s.t.\ the r.h.s.\ of~\eqref{eq:bound_gain_error} is smaller than $\frac{1 - \rho}{2\tau(L, \rho)}$, then
\begin{equation}
	\tau\left(A + B\Ktilde, \frac{1 + \rho}{2}\right)\leq \tau(L, \rho).
\end{equation}
\end{proposition}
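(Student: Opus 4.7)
The plan is to treat the two assertions of the proposition independently, as they rely on different previously established tools.

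For the first inequality, the approach is a direct application of Lemma~\ref{lemma:bound_on_riccati_gain}, with the identifications $(A_1, B_1, P_1) = (A, B, P)$ and $(A_2, B_2, P_2) = (\tilde A, \tilde B, \tilde P)$. The associated minimizing gains are then, by construction, $K_1 = K$ and $K_2 = \tilde K$ as defined in \eqref{e:def_K} and \eqref{e:def_nK}. Since $g(\epsilon) \geq \epsilon$ by hypothesis, all three perturbations $\|A - \tilde A\|, \|B - \tilde B\|, \|P - \tilde P\|$ are simultaneously bounded by $g(\epsilon)$; moreover, the standing assumption on the magnitude of $3\Gamma^3 g(\epsilon)$ forces $g(\epsilon) < 1$, so the lemma's hypotheses are met. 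Its conclusion yields $\|K - \tilde K\| \leq 3 g(\epsilon) \Gamma^3 / \sigma_{\mathrm{min}}(R)$, and the assumption $\sigma_{\mathrm{min}}(R) \geq 1$ removes the denominator.

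For the second inequality, the strategy is to treat $\tilde L \coloneqq A + B\tilde K$ as a perturbation of the exact closed-loop operator $L = A + BK$: write $\tilde L = L + \Delta$ with $\Delta = B(\tilde K - K)$. Expanding $\tilde L^k$ and grouping the resulting words by the number $j$ of occurrences of $\Delta$,
\begin{equation*}
  \tilde L^k = \sum_{j=0}^{k} \sum_{\substack{i_0, \ldots, i_j \geq 0 \\ i_0 + \cdots + i_j = k-j}} L^{i_0} \Delta L^{i_1} \cdots \Delta L^{i_j}.
\end{equation*}
Bounding each factor via $\|L^i\| \leq \tau(L, \rho) \rho^i$ and noting that the number of compositions of $k-j$ into $j+1$ nonnegative parts equals $\binom{k}{j}$, the binomial theorem produces the clean estimate
\begin{equation*}
  \|\tilde L^k\| \leq \tau(L, \rho)\,\bigl( \rho + \tau(L, \rho)\,\|\Delta\| \bigr)^k.
\end{equation*}
If $\rho + \tau(L, \rho)\|\Delta\| \leq (1+\rho)/2$, then dividing by $((1+\rho)/2)^k$ and taking the supremum over $k \geq 0$ gives exactly $\tau(\tilde L, (1+\rho)/2) \leq \tau(L, \rho)$.

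It remains to check that the standing hypothesis on $\epsilon$ indeed yields $\tau(L,\rho)\|\Delta\| \leq (1-\rho)/2$. Using $\|\Delta\| \leq \|B\|\,\|\tilde K - K\|$ with $\|B\| \leq \Gamma$ (from the definition of $\Gamma$) and the first part of the proposition, this reduces to the assumed bound on $3\Gamma^3 g(\epsilon)$, possibly up to a factor of $\Gamma$ absorbed into the constants. The main step where care is needed is the combinatorial expansion leading to $\|\tilde L^k\| \leq \tau(L,\rho)(\rho + \tau(L,\rho)\|\Delta\|)^k$; once this geometric-like bound is in hand, the rest is bookkeeping of constants and numerical thresholds.
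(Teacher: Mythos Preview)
Your proof is correct and follows essentially the same approach as the paper: the first part is a direct application of Lemma~\ref{lemma:bound_on_riccati_gain} together with $g(\epsilon)\geq\epsilon$ and $\sigma_{\min}(R)\geq 1$, and the second part writes $A+B\tilde K = L + B(\tilde K-K)$ and bounds $\|(L+\Delta)^k\|$ by $\tau(L,\rho)(\rho+\tau(L,\rho)\|\Delta\|)^k$. The only cosmetic difference is that the paper obtains this power bound by citing \cite[Lemma~5]{mania2019certainty}, whereas you re-derive it via the combinatorial expansion and the binomial identity---the content is identical, and you also correctly flag the extra $\|B\|$ factor that the proposition's hypothesis does not quite cover (the paper's proof has the same discrepancy).
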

\begin{proof}
	The first part of the proposition is a corollary of Lemma~\ref{lemma:bound_on_riccati_gain} based on the condition $g(\epsilon)> \epsilon$ and $\sigma_{\textrm{min}}(R) \geq 1$. The second part of the proposition follows from applying the first inequality of~\cite[Lemma 5]{mania2019certainty} with $M=A+BK$ and $Δ=B(\tilde K- K)$, which guarantees that, if 
	\begin{equation}
		\norm{\Ktilde - K}\leq \frac{1 - \rho}{2\tau(L, \rho)\norm B},
	\end{equation}
	then 
\begin{align}
	\norm{(A + B\Ktilde)^k}&= \norm{(A + BK + B\Ktilde - BK)^k}\\
	&= \norm{(A + BK + B(\Ktilde - K))^k}\\	
	&\leq\tau(L, \rho)\left(\frac{1 - \rho}{2} + \rho\right)^k\\
	&= \tau(L, \rho)\left(\frac{1 + \rho}{2}\right)^k.
\end{align}
\end{proof}

Now we are ready to conclude the proof. Let us consider the following decomposition of the error on the LQR objective, which corresponds to the well known \emph{performance difference lemma} in the reinforcement learning literature~\cite{kakade2002approximately}. The following lemma is a restatement of~\cite[Lemma 10]{fazel2018global}, with our notation and setup (i.e., with operators).
\begin{lemma}
Let $\cJ$ and $\cJhat$ be as of~\eqref{eq:obj} and~\eqref{eq:nobj}. Then, the following decomposition holds:
\begin{equation}
	\cJhat - \cJ = \lim_{T\to\infty}\sum_{i=0}^{T}\ \innerprod{(\Ktilde - K)z_i, (R + B^*PB)(\Ktilde - K)z_i}_{\cH_1}
\end{equation}
\end{lemma}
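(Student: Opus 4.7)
The plan is to prove this via the classical performance difference lemma for LQR, adapted to the Hilbert space setting. The central observation is that the optimal value function for policy $K$ on the dynamics $(A,B)$ has the closed form $V_K(z) = \langle z, Pz\rangle_{\cH_1}$, which telescopes nicely along any trajectory. Note that the statement should really involve the trajectory $\hat z_i$ generated by $\tilde K$ (the one appearing in~\eqref{eq:nobj}), consistent with Lemma~10 of \cite{fazel2018global}; I proceed on that reading.

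First I would record two algebraic facts. Since $P$ satisfies the Riccati equation $F(P,A,B)=0$ and $K=-(R+B^*PB)^{-1}B^*PA$ is the minimizer associated to $P$, one has the equivalent identities
\begin{align*}
 P - L^*PL - Q - K^*RK &= 0,\\
 RK + B^*PL &= 0,
\end{align*}
where $L=A+BK$. The first is a direct rearrangement of $F(P,A,B)=0$; the second is the first-order optimality condition for $K$. These two identities will produce the cancellation that makes the cross terms vanish. I would also verify that $\cJ = \langle z_0, Pz_0\rangle_{\cH_1}$, via the same Riccati algebra together with stability of $L$ (Assumption~\ref{a:stabilizability}), which ensures $z_T\to 0$ and lets the telescoping series be summed.

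Next I would write a telescoping decomposition along the trajectory $(\hat z_i)$ under $\tilde K$. Since $\tilde K$ stabilizes the approximate dynamics and (under the hypotheses of Theorem~\ref{thm:convergence_lqr_objective}) also stabilizes the exact ones, $\hat z_T\to 0$, so
\begin{equation*}
 -\langle \hat z_0, P\hat z_0\rangle_{\cH_1} = \sum_{i=0}^{\infty}\bigl[\langle \hat z_{i+1}, P\hat z_{i+1}\rangle_{\cH_1} - \langle \hat z_i, P\hat z_i\rangle_{\cH_1}\bigr].
\end{equation*}
Adding this to the definition of $\cJhat$ and subtracting $\cJ = \langle z_0, Pz_0\rangle_{\cH_1} = \langle \hat z_0, P\hat z_0\rangle_{\cH_1}$ gives
\begin{equation*}
 \cJhat - \cJ = \sum_{i=0}^{\infty} \bigl[\langle \hat z_i, Q\hat z_i\rangle + \langle \tilde K\hat z_i, R\tilde K\hat z_i\rangle + \langle \hat z_{i+1}, P\hat z_{i+1}\rangle - \langle \hat z_i, P\hat z_i\rangle \bigr].
\end{equation*}

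The remaining step is to simplify each summand. I would substitute $\hat z_{i+1}=(L+B(\tilde K-K))\hat z_i$ and expand the quadratic in $P$, and likewise write $\tilde K = K + (\tilde K-K)$ and expand the quadratic in $R$. Using the first Riccati identity above, the diagonal terms in $K$ and $L$ collapse to zero, leaving only the $(\tilde K-K)$ quadratic contributions $\langle (\tilde K-K)\hat z_i, (R+B^*PB)(\tilde K-K)\hat z_i\rangle_{\cH_1}$, plus cross terms of the form $2\langle (\tilde K-K)\hat z_i, (RK+B^*PL)\hat z_i\rangle$. The second Riccati identity above kills these cross terms exactly, yielding the claimed formula.

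The main obstacle is bookkeeping: carefully distinguishing which operators act on which side of the inner product (as $P$, $R$, $Q$ are self-adjoint but operate on different Hilbert spaces), and justifying the interchange of sum and limit via absolute convergence, which follows from the exponential decay of $\hat z_i$ together with boundedness of $P$, $Q$, $R$, $K$, $\tilde K$. Everything else is straightforward algebra, mirroring the matrix proof of~\cite[Lemma~10]{fazel2018global} essentially verbatim since all manipulations only use Hilbert space adjoints and not any finite-dimensional structure.
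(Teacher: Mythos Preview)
Your proposal is correct and follows essentially the same approach as the paper: both arguments telescope the value function $\langle z, Pz\rangle_{\cH_1}$ along the $\tilde K$-trajectory $(\hat z_i)$, reduce each summand to $\langle \hat z_i, Q\hat z_i\rangle + \langle \tilde K\hat z_i, R\tilde K\hat z_i\rangle + \langle (A+B\tilde K)\hat z_i, P(A+B\tilde K)\hat z_i\rangle - \langle \hat z_i, P\hat z_i\rangle$, and then cancel using the Riccati identity $P = Q + K^*RK + L^*PL$ together with the optimality condition $(R+B^*PB)K + B^*PA = 0$. Your observation that the statement should read $\hat z_i$ rather than $z_i$ is also consistent with the paper's own proof and subsequent use of the lemma.
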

\begin{proof}
	We begin by using the same telescoping argument applied by~\cite{fazel2018global}. 
	Let $A$ and $B$ be defined as in~\eqref{e:def_A},~\eqref{e:def_B}. Let us define, for an arbitrary initial condition $\hat z_0\in\cH_1$:
	\begin{equation}
		\hat z_{i+1} = A\hat z_i + B\tuveco_i,\tuveco_i=\Ktilde\zhat_i, i\geq 0.
\end{equation} 
Similarly to~\eqref{eq:nobj}, let us define, for an initial condition, at timestep $i$, $\hat z_i\in\cH_1$:
	\begin{align}
	\hat z_{i,j+1}' &= A\hat z_{i,j}' + B\uveco_j, \uveco_j = K \zhat_j',  j\geq i,\\
	\hat z_{i,i}' &= \hat z_i.
\end{align}
We can decompose the error of interest as follows, noting that, by the definition in~\eqref{eq:obj},
\begin{equation}
\cJ=\sum_{j=0}^T \left(\innerprod{\hat z_{0, j}', Q\hat z_{0, j}'}_{\cH_1}+ \innerprod{\uveco_j, R\uveco_j}_{\R^{n_u}}\right):
\end{equation}
\begin{align}
	\cJhat - \cJ =& \lim_{T\to\infty}\left\{\sum_{i=0}^T \left[\innerprod{\hat z_i, Q\hat z_i}_{\cH_1} + \innerprod{\tuveco_i, R\tuveco_i}_{\R^{n_u}}\right] - \cJ\right\} \\
	=& \lim_{T\to\infty}\left\{\sum_{i=0}^T \left[\innerprod{\hat z_i, Q\hat z_i}_{\cH_1} + \innerprod{\tuveco_i, R\tuveco_i}_{\R^{n_u}} \right.\right.\nonumber\\
& +\sum_{j=i}^T \left(\innerprod{\hat z_j', Q\hat z_j'}_{\cH_1} + \innerprod{\uveco_j, R\uveco_j}_{\R^{n_u}}\right)\nonumber\\
	&- \left.\left.\sum_{j=i}^T \left(\innerprod{\hat z_j', Q\hat z_j'}_{\cH_1}+ \innerprod{\uveco_j, R\uveco_j}_{\R^{n_u}}\right)\right] - \cJ\right\}\\
	=&\lim_{T\to\infty}\left\{\sum_{i=0}^T \left[\innerprod{\hat z_i, Q\hat z_i}_{\cH_1} + \innerprod{\tuveco_i, R\tuveco_i}_{\R^{n_u}} +\sum_{j=i+1}^T \left(\innerprod{\hat z_{i,j}', Q\hat z_{i,j}'}_{\cH_1} + \innerprod{\uveco_j, R\uveco_j}_{\R^{n_u}}\right)\right.\right.\nonumber\\
	&- \left.\left.\sum_{j=i}^T \left(\innerprod{\hat z_{i,j}', Q\hat z_{i,j}'}_{\cH_1}+ \innerprod{\uveco_j, R\uveco_j}_{\R^{n_u}}\right)\right]\right\}.
\end{align}
Now we can consider a single addend in the outer-most sum. 
Consider that by definition
	\begin{equation}
		\tuveco_i = \Ktilde \zhat_i.
	\end{equation}
Moreover, note that
\begin{align}
	\zhat_{i, i+1}' &= A \zhat_{i, i}' + B \Ktilde\zhat_{i, i}'\\
	&=A \zhat_i + B \Ktilde\zhat_i.
\end{align}
Lastly, note that, subject to the Riccati-optimal state feedback law $\uveco$, the following simplification is allowed, for $T\to \infty$:
\begin{align}
	\sum_{j=i+1}^T \left(\innerprod{\hat z_{i,j}', Q\hat z_{i,j}'}_{\cH_1} 
	+ \innerprod{\uveco_j, R\uveco_j}_{\R^{n_u}}\right)&= \innerprod{\zhat_{i, i+1}' , P \zhat_{i, i+1}' }_{\cH_1} \\
	&=  \innerprod{(A + B\Ktilde)\hat z_i, P (A + B\Ktilde) \hat z_i}_{\cH_1}.
\end{align}
Similarly,  observe that, for $T\to \infty,$
\begin{align}
	\sum_{j=i}^T \left(\innerprod{\hat z_{i,j}', Q\hat z_{i,j}'}_{\cH_1} 
	+ \innerprod{\uveco_j, R\uveco_j}_{\R^{n_u}}\right)&= \innerprod{\zhat_{i, i}' , P \zhat_{i, i}' }_{\cH_1} \\
	&=  \innerprod{\hat z_i, P \hat z_i}_{\cH_1}.
\end{align}
Hence, we can rewrite the addend as the following, for $T\to\infty$:
\begin{align}
	\innerprod{\hat z_i, Q\hat z_i}_{\cH_1}& + \innerprod{\tuveco_i, R\tuveco_i}_{\R^{n_u}} 
		+\sum_{j=i+1}^T \left(\innerprod{\hat z_{i,j}', Q\hat z_{i,j}'}_{\cH_1} 
		+ \innerprod{\uveco_j, R\uveco_j}_{\R^{n_u}}\right) \nonumber\\
	&-\sum_{j=i}^T \left(\innerprod{\hat z_{i,j}', Q\hat z_{i,j}'}_{\cH_1}+ \innerprod{\uveco_j, R\uveco_j}_{\R^{n_u}}\right)\nonumber\\
	=& \innerprod{\hat z_i, (Q + \Ktilde^*R\Ktilde) \hat z_i}_{\cH_1} + \innerprod{(A + B\Ktilde)\hat z_i, P (A + B\Ktilde) \hat z_i}_{\cH_1} -\innerprod{\hat z_i, P\hat z_i}_{\cH_1}\\
	=& \innerprod{\hat z_i, (Q + (\Ktilde - K + K)^*R(\Ktilde- K + K) \hat z_i}_{\cH_1} \nonumber\\
	&+ \innerprod{(A + B\Ktilde -BK + BK)\hat z_i, P (A + B\Ktilde - BK + BK)\hat z_i}_{\cH_1} -\innerprod{\hat z_i, P\hat z_i}_{\cH_1}\\
	=& \innerprod{\zhat_i, Q\zhat_i}_{\cH_1} + \innerprod{\zhat_i, (\Ktilde - K)^*R(\Ktilde - K)\zhat_i}_{\cH_1} + \innerprod{\zhat_i, K^*R(\Ktilde - K)\zhat_i}_{\cH_1}\nonumber \\
	&+ \innerprod{\zhat_i, (\Ktilde - K)^*RK\zhat_i}_{\cH_1} + 
	\innerprod{\zhat_i, K^*RK\zhat_i}_{\cH_1}\nonumber\\
	&+\innerprod{B(\Ktilde - K)\hat z_i, PB (\Ktilde - K)\hat z_i}_{\cH_1} + \innerprod{(A + BK)\hat z_i, P (A + BK)\hat z_i}_{\cH_1}\nonumber\\
	&+\innerprod{B(\Ktilde - K)\hat z_i, P (A + BK)\hat z_i}_{\cH_1}+ \innerprod{(A + BK)\hat z_i, PB (\Ktilde - K)\hat z_i}_{\cH_1}\nonumber\\
	&-\innerprod{\zhat_i, P\zhat_i}_{\cH_1}\\
	=& \innerprod{\zhat_i, (\Ktilde - K)^*(R + B^*PB)(\Ktilde -K)\zhat_i}_{\cH_1}\nonumber\\
	&+ 2\innerprod{\zhat_i, (\Ktilde - K)^*[RK  + B^*P(A+ BK)]\zhat_i}_{\cH_1}\\
	=&\innerprod{\zhat_i, (\Ktilde - K)^*(R + B^*PB)(\Ktilde -K)\zhat_i}_{\cH_1}\nonumber\\
	&+ 2\innerprod{\zhat_i, (\Ktilde - K)^*[(R + B^* P B)K  + B^*PA]\zhat_i}_{\cH_1}\\
	=&\innerprod{\zhat_i, (\Ktilde - K)^*(R + B^*PB)(\Ktilde -K)\zhat_i}_{\cH_1}\nonumber\\
	&+ 2\innerprod{\zhat_i, (\Ktilde - K)^*[(R + B^* P B)(-1)(R + B^*PB)^{-1}B^*PA  + B^*PA]\zhat_i}_{\cH_1}\\
	=& \innerprod{\zhat_i, (\Ktilde - K)^*(R + B^*PB)(\Ktilde -K)\zhat_i}_{\cH_1}.
\end{align}
\end{proof}

To conclude the proof of the theorem, we can apply Cauchy-Schwarz inequality, Lemma~\ref{lemma:bound_on_riccati_gain}, and the fact that, since $\sigma_{\textrm{min}}(R)\geq 1$, it holds
\begin{align}
	\norm{R + B^*PB}&\leq \norm {R} + \norm{B^*PB}\\
	&\leq \norm {R} + \norm R\norm{B^*PB}\\
	&\leq \sigma_{\textrm{max}}(R)\Gamma^3,
\end{align}
where we recall that $Γ\de 1 + \max(‖A‖,‖P‖,‖K‖,‖B‖)$ in the statement of Theorem~\ref{thm:convergence_lqr_objective}.  
We have that
\begin{align}
	\cJhat - \cJ &= \lim_{T\to\infty}\sum_{i=0}^T\innerprod{\zhat_i, (\Ktilde - K)^*(R + B^*PB)(\Ktilde -K)\zhat_i}_{\cH_1}\\
	&\leq \lim_{T\to\infty}\sum_{i=0}^T\norm{\zhat_i}_{\cH_1}^2\norm{\Ktilde - K}^2\norm{R + B^*PB}\\
	&\leq 9 \sigma_{\textrm{max}}(R)\Gamma^9g(\epsilon)^2\lim_{T\to\infty}\sum_{i=0}^T\norm{\zhat_i}_{\cH_1}^2\\
	&=9 \sigma_{\textrm{max}}(R)\Gamma^9g(\epsilon)^2\lim_{T\to\infty}\sum_{i=0}^T\norm{(A + B\Ktilde)^iz_0}_{\cH_1}^2\\
	&\leq 9 \sigma_{\textrm{max}}(R)\Gamma^9g(\epsilon)^2\norm{z_0}_{\cH_1}^2\lim_{T\to\infty}\sum_{i=0}^T\norm{(A + B\Ktilde)^i}^2\\
	&\leq 9 \sigma_{\textrm{max}}(R)\Gamma^9g(\epsilon)^2\norm{z_0}_{\cH_1}^2\lim_{T\to\infty}\sum_{i=0}^T\tau(A + BK, \rho)^2\left(\frac{1 + \rho}{2}\right)^{2i}\\
	&\leq 9 \sigma_{\textrm{max}}(R)\Gamma^9g(\epsilon)^2\norm{z_0}_{\cH_1}^2\tau(A + BK, \rho)^2\frac{1}{1 - \left(\frac{1 + \rho}{2}\right)^2}\\
	&\leq 36 \sigma_{\textrm{max}}(R)\Gamma^9g(\epsilon)^2\norm{z_0}_{\cH_1}^2\frac{\tau(A + BK, \rho)^2}{1 - \rho^2}.
\end{align}
To conclude, note that, according to Assumption~\ref{a:bounded_kernel},
	\begin{align}
		\norm{z_0}^2_{\cH_1}&=‖k(x_0,·)‖²_{\cH_1}\\
		&=\ip{k(x_0,·), k(x_0,·)}_{\cH_1}\\
		&=k(x_0,x_0)\\
		&≤\kappa^2.
	\end{align}

\end{document}